\newtheorem{prop}{Proposition}[section]
\newtheorem{theorem}[prop]{Theorem}
\newtheorem{lemma}[prop]{Lemma}
\newtheorem{corollary}[prop]{Corollary}
\theoremstyle{definition}
\newtheorem*{definition}{Definition}
\newtheorem{question}[prop]{Question}
\theoremstyle{remark}
\newtheorem*{remark*}{Remark}
\newtheorem{remark}[prop]{Remark}
\theoremstyle{plain}
\newcommand{\N}{\mathbb{N}}
\newcommand{\Z}{\mathbb{Z}}
\newcommand{\Q}{\mathbb{Q}}
\newcommand{\E}{\mathbb{E}}
\newcommand{\F}{\mathbb{F}}
\newcommand{\eps}{\varepsilon}
\newcommand{\der}{\partial}
\newcommand{\pc}{\textup{dc}}
\newcommand{\dc}{\textup{dc}}
\newcommand{\Mat}{\text{\textup{Mat}}}
\newcommand{\dd}{\textup{d}}
\newcommand{\bfe}{\mathbf{e}}
\newcommand{\bfv}{\mathbf{v}}
\newcommand{\bfw}{\mathbf{w}}
\newcommand{\Fp}{\mathbb{F}_p}
\newcommand{\mat}[2]{\Mat_{#1 \times #2}(\Fp)}
\newcommand{\codim}{\operatorname{codim}}
\numberwithin{equation}{section}
\newcommand{\e}{\varepsilon }
\def\coloneqq{\mathrel{\mathop\mathchar"303A}\mkern-1.2mu=}
\newcommand{\gen}[1]{\left\langle#1\right\rangle}
\newcommand{\dis}{\operatorname{d}}
\newcommand{\ball}{\mathbb{B}}
\newcommand{\ga}{\Gamma}
\def\author@andify{%
  \nxandlist {\unskip ,\penalty-1 \space\ignorespaces}%
    {\unskip {} \@@and~}%
    {\unskip \penalty-2 \space \@@and~}%
}
\title{Probabilistic nilpotence in infinite groups}
\date{}
\author[A. Martino]{Armando Martino}
\address{Mathematical Sciences, University of Southampton, University Road, Southampton SO17 1BJ, United Kingdom}
\email{A.Martino@soton.ac.uk}
\author[M. C. H. Tointon]{Matthew C. H. Tointon}
\address{School of Mathematics,
University of Bristol,
Fry Building,
Woodland Road,
Bristol,
BS8 1UG,
United Kingdom}
\email{m.tointon@bristol.ac.uk}
\author[M. Valiunas]{Motiejus Valiunas}
\address{Mathematical Sciences, University of Southampton, University Road, Southampton SO17 1BJ, United Kingdom}
\curraddr{Instytut Matematyczny, Uniwersytet Wroc{\l}awski, plac Grunwaldzki 2/4, 50-384 Wroc{\l}aw, Poland}
\email{valiunas@math.uni.wroc.pl}
\author[E. Ventura]{Enric Ventura}
\address{Departament de Matem\`atiques, 
Universitat Polit\`ecnica de Catalunya,
 and 
Institut de Matem\`atiques de la UPC-BarcelonaTech, 
CATALONIA}
\email{Enric.Ventura@upc.edu}
\thanks{The first and third authors gratefully acknowledge the grant SEV-20150554 which partially funded their stay at the ICMAT, where a part of the research for this paper was done. The second author was supported by grant FN 200021\_163417/1 of the Swiss National Fund for scientific research. The fourth author acknowledges partial support from the Spanish Agencia Estatal de Investigaci\'on, through grant MTM2017-82740-P (AEI/ FEDER, UE), and also from the Graduate School of Mathematics through the “Mar\' {\i}a de Maeztu” Programme for Units of Excellence in R\&D (MDM-2014-0445).}
\begin{document}

\begin{abstract}The `degree of $k$-step nilpotence' of a finite group $G$ is the proportion of the tuples $(x_1,\ldots,x_{k+1})\in G^{k+1}$ for which the simple commutator $[x_1,\ldots,x_{k+1}]$ is equal to the identity. In this paper we study versions of this for an infinite group $G$, with the degree of nilpotence defined by sampling $G$ in various natural ways, such as with a random walk, or with a F\o lner sequence if $G$ is amenable. In our first main result we show that if $G$ is finitely generated then the degree of $k$-step nilpotence is positive if and only if $G$ is virtually $k$-step nilpotent (\cref{thm:dc_k}). This generalises both an earlier result of the second author treating the case $k=1$ and a result of Shalev for finite groups, and uses techniques from both of these earlier results. We also show, using the notion of polynomial mappings of groups developed by Leibman and others, that to a large extent the degree of nilpotence does not depend on the method of sampling (\cref{cor:indep}). As part of our argument we generalise a result of Leibman by showing that if $\varphi$ is a polynomial mapping into a torsion-free nilpotent group then the set of roots of $\varphi$ is sparse in a certain sense (\cref{thm:leibman}). In our second main result we consider the case where $G$ is residually finite but not necessarily finitely generated. Here we show that if the degree of $k$-step nilpotence of the finite quotients of $G$ is uniformly bounded from below then $G$ is virtually $k$-step nilpotent (\cref{thm:res.fin.}), answering a question of Shalev. As part of our proof we show that degree of nilpotence of finite groups is sub-multiplicative with respect to quotients (\cref{{thm:gallagher}}), generalising a result of Gallagher.
\end{abstract}

\dedicatory{In memory of Peter Neumann}

\maketitle

\setcounter{tocdepth}{1}
\tableofcontents

\section{Introduction}

If two elements $x,y$ are chosen independently uniformly at random from a finite group $G$, we define the probability that they commute to be the \emph{commuting probability} or \emph{degree of commutativity} of $G$, and denote it by $\pc(G)$. Peter Neumann proved the following structure theorem for groups with a high degree of commutativity.
\begin{theorem}[P. M. Neumann {\cite[Theorem 1]{neumann}}]\label{neumann}
Let $G$ be a finite group such that $\pc(G)\ge\alpha>0$. Then $G$ has a normal subgroup $\Gamma$ of index at most $\alpha^{-1}+1$ and a normal subgroup $H$ of cardinality at most $\exp(O(\alpha^{-O(1)}))$ such that $H\subset\Gamma$ and $\Gamma/H$ is abelian.
\end{theorem}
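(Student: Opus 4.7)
My plan is to use the identity $\pc(G) = k(G)/|G|$, where $k(G)$ denotes the number of conjugacy classes of $G$ (a consequence of Burnside's lemma applied to the conjugation action of $G$ on itself). The hypothesis $\pc(G) \geq \alpha$ thus reads $k(G) \geq \alpha|G|$, and since the conjugacy-class sizes are positive integers summing to $|G|$, averaging produces an element $x_0 \in G$ whose conjugacy class has size at most $\alpha^{-1}$. A refinement of the same pigeonhole shows that a positive proportion of $x \in G$ satisfy $|C_G(x)| \geq \alpha|G|/2$, so small conjugacy classes in $G$ abound.

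The first step is to construct the normal subgroup $\Gamma$ of index at most $\alpha^{-1}+1$. The natural candidates are the set of elements whose $G$-conjugacy class has size at most $\alpha^{-1}$ (or the subgroup it generates), and the normal core of $C_G(x_0)$ for $x_0$ as above. Either way the goal is to ensure that every element of $\Gamma$ has small $G$-conjugacy class while the index $[G:\Gamma]$ remains bounded by $\alpha^{-1}+1$; the key input for the sharp index bound is a refinement of the counting above that controls how many conjugacy classes in $G$ can fail to be small.

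With $\Gamma$ in hand, every $x \in \Gamma$ has its $\Gamma$-conjugacy class contained in its $G$-conjugacy class, hence of size at most some $n = O(\alpha^{-1})$, so $\Gamma$ is a BFC group. The second step is then to invoke the quantitative form of B.~H.~Neumann's theorem on BFC groups, due originally to Wiegold and subsequently refined by Vaughan-Lee and others, which gives $|[\Gamma,\Gamma]| \leq \exp(O(n\log n)) = \exp(O(\alpha^{-O(1)}))$. Setting $H = [\Gamma,\Gamma]$, the quotient $\Gamma/H$ is abelian, and $H$ is characteristic in $\Gamma$, hence normal in $G$ as required.

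The main obstacle is the bookkeeping needed to hit the sharp index bound $\alpha^{-1}+1$ and to feed the correct class-size parameter into the BFC estimate. The qualitative structural statement (a finite-index subgroup that is abelian-by-bounded) follows fairly routinely once small conjugacy classes are exhibited, but the exponential control $\exp(O(\alpha^{-O(1)}))$ on $|H|$ rests entirely on the quantitative B.~H.~Neumann estimate, which is the principal external ingredient powering the proof.
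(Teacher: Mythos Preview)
The paper does not prove this theorem; it is quoted as a result of P.~M.~Neumann with a citation to \cite{neumann}, and is used only as background and motivation. There is therefore no ``paper's own proof'' to compare against.

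That said, your outline is essentially Neumann's original argument. The two steps are exactly as you describe: first, a pigeonhole on conjugacy-class sizes shows that the set of elements whose $G$-conjugacy class has bounded size (say at most $2\alpha^{-1}$) has positive density, and one takes $\Gamma$ to be the subgroup they generate (a normal BFC group of bounded index); second, the quantitative B.~H.~Neumann theorem (Wiegold, with later sharpenings) bounds $|[\Gamma,\Gamma]|$ polynomially in the class-size bound, and one sets $H=[\Gamma,\Gamma]$. Your identification of the bookkeeping for the precise index bound and of the quantitative BFC estimate as the main external input is accurate. The paper's own \cref{prop:large.centraliser} is a direct generalisation of the first step to sequences of measures and higher commutators, so your approach is fully in line with the techniques the paper builds on.
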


There are many natural ways in which one might seek to generalise this result. Here we seek to generalise it in two ways. The first is to higher-degree commutators. Given elements $x_i$ in a group $G$, we define the \emph{simple commutators} $[x_1,\ldots,x_k]$ inductively by setting $[x_1,x_2]=x_1^{-1}x_2^{-1}x_1x_2$ and setting $[x_1,\ldots,x_k]=[[x_1,\ldots,x_{k-1}],x_k]$. If $G$ is finite, we define $\dc^k(G)$ to be the probability that $[x_1,\ldots,x_{k+1}]=1$ if $x_1,\ldots x_{k+1}$ are chosen independently uniformly at random from $G$.

Shalev \cite{shalev} recently considered higher-order commutators in residually finite groups, proving the following results.
\begin{theorem}[Shalev; see the proof of {\cite[Theorem 1.1]{shalev}}]\label{thm:shalev}
Let $G$ be a finite group of rank at most $r$, and let $k\in\N$. Suppose that $\dc^k(G)\ge\alpha>0$. Then $G$ has a $k$-step nilpotent subgroup of index at most $O_{r,k,\alpha}(1)$.
\end{theorem}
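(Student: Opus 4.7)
The plan is to induct on $k$. For the base case $k=1$, we seek an abelian subgroup of bounded index. \cref{neumann} gives a normal subgroup $\Gamma \triangleleft G$ of index at most $\alpha^{-1}+1$ with $[\Gamma,\Gamma]$ of bounded size $M = \exp(O(\alpha^{-O(1)}))$. Since $[x,\Gamma] \subseteq [\Gamma,\Gamma]$ for any $x \in \Gamma$, each conjugacy class in $\Gamma$ has size at most $M$, so $[\Gamma:C_\Gamma(x)] \leq M$. Using the rank hypothesis, pick generators $x_1,\ldots,x_r$ of $\Gamma$; then $Z(\Gamma) = \bigcap_i C_\Gamma(x_i)$ has index at most $M^r$ in $\Gamma$, and is an abelian subgroup of $G$ of index bounded in terms of $r$ and $\alpha$.

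For the inductive step, assume the result for $k-1$ and suppose $\dc^k(G) \geq \alpha$. Writing $c = c(x_1,\ldots,x_k) \coloneqq [x_1,\ldots,x_k]$, a random $x_{k+1}$ satisfies $[c,x_{k+1}]=1$ with probability $|C_G(c)|/|G|$, hence
$$\dc^k(G) = \E_{x_1,\ldots,x_k}\!\left[\frac{|C_G(c)|}{|G|}\right] \geq \alpha,$$
so by a Markov-style averaging a density at least $\alpha/2$ of tuples satisfy $[G:C_G(c)] \leq 2/\alpha$. A rank-$r$ group has only boundedly many subgroups of index at most $n$ (each such subgroup corresponds to a transitive $G$-action on at most $n$ points, of which there are at most $(n!)^r$ up to conjugation), so there is some $C = C(r,\alpha)$ bounding the number of candidate subgroups. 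By pigeonhole, a single subgroup $H \leq G$ of index at most $2/\alpha$ satisfies $H \leq C_G(c)$ for a density $\geq \alpha/(2C)$ of tuples. Replacing $H$ with its normal core (still of bounded index) we may assume $H \triangleleft G$, and we set $K \coloneqq C_G(H) \triangleleft G$. The condition $H \leq C_G(c)$ becomes $c \in K$, so $\dc^{k-1}(G/K) \geq \alpha/(2C)$.

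Since $G/K$ has rank at most $r$, the inductive hypothesis yields a $(k-1)$-step nilpotent subgroup $\bar L \leq G/K$ of bounded index. Its preimage $L \leq G$ satisfies $\gamma_k(L) \subseteq K$, and the bounded-index subgroup $L \cap H$ of $G$ then satisfies
$$\gamma_{k+1}(L \cap H) = [\gamma_k(L \cap H),\,L \cap H] \subseteq [\gamma_k(L),\,H] \subseteq [K,H] = 1,$$
showing $L \cap H$ is $k$-step nilpotent of bounded index, as required. The main obstacle — and the reason the rank hypothesis is indispensable in this approach — is the pigeonhole step bounding the number of subgroups of $G$ of index $\leq 2/\alpha$: without it one cannot extract a single common bounded-index subgroup lying in many centralisers, and the clean reduction to $\dc^{k-1}$ of a quotient collapses. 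Rank is also what promotes the Neumann-style conclusion ``$[\Gamma,\Gamma]$ bounded'' to ``bounded-index abelian subgroup'' in the base case, by allowing one to intersect the centralisers of finitely many generators.
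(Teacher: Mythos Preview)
Your proof is correct and follows essentially the same inductive scheme as the paper's proof of \cref{thm:dc_k} (which, specialized to the uniform measure on a finite group, is precisely the argument attributed to Shalev): a Markov-style bound produces many tuples whose simple commutator has centralizer of bounded index, one extracts a common normal bounded-index subgroup, applies induction to the quotient by its centralizer, and intersects the pullback with that subgroup. The only differences are cosmetic: the paper starts the induction at $k=0$ rather than invoking \cref{neumann} for $k=1$, and in the inductive step it takes $\Gamma$ to be the intersection of \emph{all} subgroups of index at most $1/\gamma$---which is automatically normal and serves for the full $\alpha/2$-density of good tuples---rather than pigeonholing onto a single centralizer and then passing to its normal core (both routes use the same bound on the number of subgroups of bounded index, your \cref{fin.gen.fin.ind}).
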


\begin{corollary}[Shalev {\cite[Theorem 1.1]{shalev}}]\label{cor:shalev}
Let $G$ be a finitely generated residually finite group of rank at most $r$, and let $k\in\N$. Suppose that $\dc^k(G/H)\ge\alpha>0$ for every finite-index normal subgroup $H$ of $G$. Then $G$ has a $k$-step nilpotent subgroup of index at most $O_{r,k,\alpha}(1)$.
\end{corollary}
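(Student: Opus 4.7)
The plan is to derive the corollary from \cref{thm:shalev} by a standard compactness/pigeonhole argument exploiting both the residual finiteness and the finite generation of $G$. For any finite-index normal subgroup $H\le G$, the quotient $G/H$ is a finite group of rank at most $r$ with $\dc^k(G/H)\ge\alpha$, so \cref{thm:shalev} supplies a $k$-step nilpotent subgroup $N_H/H \le G/H$ of index at most a constant $C = O_{r,k,\alpha}(1)$. Lifting back, its preimage $L_H \le G$ satisfies $H \subseteq L_H$, $[G:L_H]\le C$, and $\gamma_{k+1}(L_H) \subseteq H$.

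The key observation is that, since $G$ is finitely generated, the collection $\mathcal{S}$ of subgroups of $G$ of index at most $C$ is finite. For each finite-index normal $H \le G$, set
\[
\Phi(H) \coloneqq \{L \in \mathcal{S} : \gamma_{k+1}(L) \subseteq H\}.
\]
The previous paragraph gives $\Phi(H) \ne \emptyset$, and clearly $\Phi(H_2) \subseteq \Phi(H_1)$ whenever $H_2 \subseteq H_1$. Using residual finiteness together with the fact that a finitely generated group has only countably many finite-index subgroups, one can choose a descending chain $H_1 \supseteq H_2 \supseteq \cdots$ of finite-index normal subgroups of $G$ with $\bigcap_i H_i = \{1\}$. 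The sets $\Phi(H_i)$ then form a descending chain of nonempty subsets of the finite set $\mathcal{S}$, whose intersection is therefore nonempty. Any $L \in \bigcap_i \Phi(H_i)$ satisfies $\gamma_{k+1}(L) \subseteq \bigcap_i H_i = \{1\}$, so $L$ is a $k$-step nilpotent subgroup of $G$ of index at most $C$, as required.

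I do not expect any substantive obstacle: the deep content lies in \cref{thm:shalev}, and the passage from uniform bounds on finite quotients to a subgroup of $G$ itself is the standard compactness argument for residually finite groups. The only minor point to verify is that the rank of every quotient $G/H$ is bounded above by the rank $r$ of $G$, so that \cref{thm:shalev} applies uniformly across all $H$ with constants depending only on $r$, $k$, and $\alpha$.
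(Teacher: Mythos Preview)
Your argument is correct. The paper itself does not give a proof of \cref{cor:shalev}; it is quoted as Shalev's result \cite[Theorem~1.1]{shalev}, and your deduction from \cref{thm:shalev} via the standard finiteness/compactness argument (using \cref{fin.gen.fin.ind}, i.e.\ that a finitely generated group has only finitely many subgroups of each given index) is exactly how one passes from the finite-group statement to the residually finite one.

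It is worth contrasting your approach with the paper's proof of the related \cref{thm:res.fin.}, which handles residually finite groups that need not be finitely generated. There one cannot appeal to the finiteness of $\mathcal S$, so instead the paper combines the submultiplicativity result \cref{thm:gallagher} with the gap theorem \cref{thm:gap} to force a descending chain of finite-index normal subgroups to reach a $k$-step nilpotent one after boundedly many steps. That argument, however, does not produce an index bound depending only on $r,k,\alpha$; your compactness argument does, precisely because the finite-generation hypothesis makes $\mathcal S$ finite and lets the uniform bound $C=O_{r,k,\alpha}(1)$ from \cref{thm:shalev} survive to the limit.
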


The second way in which we seek to generalise results of this type is by considering groups that are not necessarily finite or even residually finite. The first question in this setting is how to define the probability that two group elements commute. In \cite{amv} Antol\'in and the first and fourth authors approach this issue by considering sequences of finitely supported probability measures whose supports converge to the whole of $G$. Given a probability measure $\mu$ on $G$, define the \emph{degree of commutativity} $\dc_\mu(G)$ of $G$ with respect to $\mu$ via
\[
\dc_\mu(G)=\mu(\{(x,y)\in G\times G:xy=yx\})
\]
(here, and throughout, we abuse notation slightly by writing $\mu(X)$ for $(\mu\times\cdots\times\mu)(X)$ when $X\subset G^k$).
Then, given a sequence $M=(\mu_n)_{n=1}^\infty$ of probability measures on $G$, define the \emph{degree of commutativity} $\dc_M(G)$ of $G$ with respect to $M$ via
\[
\dc_M(G)=\limsup_{n\to\infty}\dc_{\mu_n}(G).
\]

Here we extend this notion to more general equations. For each $k\in\N$, write $F_k$ for the free group on $k$ generators, denoted $x_1,\ldots,x_k$.

\begin{definition}Let $G$ be a group.
\begin{enumerate}
\item An \emph{equation} in $k$ variables over $G$ is a word $\varphi\in F_k\ast G$. Abusing notation slightly, we may view $\varphi$ as a function $G^k\to G$ by defining $\varphi(g_1,\ldots,g_k)$ to be the element of $G$ resulting from replacing each instance of $x_i$ in the word $\varphi$ by $g_i$.
\item Given a probability measure $\mu$ on $G$ and an equation $\varphi$ in $k$ variables over $G$, define the \emph{degree of satisfiability} $\dd\varphi_\mu(G)$ of $\varphi$ in $G$ with respect to $\mu$ via
\[
\dd\varphi_\mu(G) = \mu(\{ (g_1,\ldots,g_k) \in G^k \mid \varphi(g_1,\ldots,g_k) = 1 \}).
\]
Then, given a sequence $M=(\mu_n)_{n=1}^\infty$ of probability measures on $G$, define the \emph{degree of satisfiability} $\dd\varphi_M(G)$ of $\varphi$ in $G$ with respect to $M$ via
\[
\dd\varphi_M(G)=\limsup_{n\to\infty}\dd\varphi_{\mu_n}(G).
\]
When $G$ is finite and $\mu$ is the uniform probability measure on $G$ we write simply $\dd\varphi(G)=\dd\varphi_\mu(G)$.
\end{enumerate}
\end{definition}

In particular, if $c = [x_1,x_2] \in F_2$ is a commutator, then we obtain the usual definitions of $\dc_\mu(G)$ and $\dc_M(G)$, as above. More generally, here and throughout we denote by $c^{(k)}$ the $(k+1)$-fold simple commutator, $c^{(k)} = [x_1,\ldots,x_{k+1}] \in F_{k+1}$, so that $c = c^{(1)}$. We call the resulting number $\dc_\mu^k(G)$ (respectively $\dc_M^k(G)$) the \emph{degree of $k$-nilpotence} of $G$ with respect to $\mu$ (respectively $M$).
\begin{comment}
Here, we define
\[
\dc_\mu^k(G)=\mu(\{(x_1,\ldots,x_{k+1})\in G^{k+1}:[x_1,\ldots,x_{k+1}]=1\})
\]
(here, and throughout, we abuse notation slightly by writing $\mu(X)$ for $(\mu\times\cdots\times\mu)(X)$ when $X\subset G^k$), and
\[
\dc_M^k(G)=\limsup_{n\to\infty}\dc_{\mu_n}^k(G).
\]
\end{comment}
For notational convenience in the inductive proof of \cref{thm:dc_k}, below, we also define $c^{(0)} = x_1 \in F_1$, so that $\dc_M^0(G)=\limsup_{n\to\infty}\mu_n(\{1\})$.

In \cite{amv}  Antol\'in and the first and fourth authors suggest that for any `reasonable' sequence $M=(\mu_n)_{n=1}^\infty$ of probability measures on $G$ we should have $\dc_M^k(G)>0$ if and only if $G$ is virtually $k$-step nilpotent. They further suggest that `reasonable' might mean that the measures $\mu_n$ cover $G$ with `enough homogeneity' as $n\to\infty$. A specific example they give of what should be a `reasonable' sequence is where $\mu$ is some finite probability measure on $G$, and $\mu_n=\mu^{\ast n}$ is defined by letting $\mu^{\ast n}(x)$ be the probability that a random walk of length $n$ on $G$ with respect to $\mu$ ends at $x$. If $G$ is amenable, another natural sequence of measures to consider is the sequence of uniform probability measures on a F\o lner sequence, or more generally an \emph{almost-invariant sequence of measures}, which is to say a sequence $(\mu_n)_{n=1}^\infty$ of probability measures satisfying
\[
\|x\cdot\mu_n-\mu_n\|_1\to0
\]
for every $x\in G$ (here $x\cdot\mu$ is defined by setting $x\cdot\mu(A)=\mu(x^{-1}A)$).

In \cite{comm.prob} the second author gave some fairly general conditions on a sequence $(\mu_n)_{n=1}^\infty$ of measures under which such a theorem holds in the case $k=1$. The following specific cases follow from \cite[Theorems 1.9, 1.11 \& 1.12]{comm.prob}.
\begin{theorem}[\cite{comm.prob}]\label{thm:dc.specific}
Let $G$ be a finitely generated group. Suppose that either
\begin{enumerate}
\item $\mu$ is a symmetric, finitely supported generating probability measure on $G$ with $\mu(\{1\})>0$, and $M=(\mu^{\ast n})_{n=1}^\infty$ is the sequence of measures corresponding to the steps of the random walk on $G$ with respect to $\mu$; or
\item $G$ is amenable and $M=(\mu_n)_{n=1}^\infty$ is an almost-invariant sequence of probability measures on $G$.
\end{enumerate}
Suppose that $\dc_M(G)\ge\alpha>0$. Then $G$ has a normal subgroup $\Gamma$ of index at most $\lceil\alpha^{-1}\rceil$ and a normal subgroup $H$ of cardinality at most $\exp(O(\alpha^{-O(1)}))$ such that $H\subset\Gamma$ and $\Gamma/H$ is abelian. In particular, if the rank of $G$ is at most $r$ then $G$ has an abelian subgroup of index at most $O_{r,\alpha}(1)$.
\end{theorem}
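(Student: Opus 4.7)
The plan is to adapt the strategy of P. M. Neumann's proof of \cref{neumann} to the measure-theoretic setting, treating the sequence $M$ as a substitute for normalised counting on a finite group. The key preliminary observation is an \emph{equidistribution lemma}: in both cases, any subgroup $H\le G$ satisfies
\[
\limsup_{n\to\infty} \mu_n(H)\;\le\;\frac{1}{[G:H]},
\]
with equality whenever $H$ has finite index. In case (ii) this is immediate from almost-invariance, since distinct cosets of $H$ have asymptotically equal $\mu_n$-measure and must sum to at most $1$. In case (i), the projection of the walk to the coset space $G/H$ (for $H$ of finite index) is a random walk driven by a symmetric, generating, finitely supported measure with positive mass at the identity, and thus equidistributes by the classical theory of random walks on finite groups; the infinite-index case requires an additional argument using the finite generation of $G$. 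In particular, any subgroup $H$ with $\limsup_n \mu_n(H) \ge \beta$ must have $[G:H] \le \beta^{-1}$.

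Given this lemma, the main argument begins with the expansion
\[
\dc_{\mu_n}(G)\;=\;\sum_{x\in G}\mu_n(x)\,\mu_n(C_G(x))\;\ge\;\alpha,
\]
whence Markov's inequality yields a set $A_n$ of $\mu_n$-mass $\ge\alpha/2$ on which $\mu_n(C_G(x))\ge\alpha/2$; the equidistribution lemma then forces $[G:C_G(x)]\le 2/\alpha$ for each such $x$. Setting $\Gamma=\{g\in G:[G:C_G(g)]\le\lceil\alpha^{-1}\rceil\}$, one obtains a normal subset of $G$ capturing at least $\alpha$ of the $\mu_n$-mass in limsup, and a careful application of the equidistribution lemma to $\Gamma$ itself shows that $\Gamma$ is in fact a subgroup of index at most $\lceil\alpha^{-1}\rceil$. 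With $\Gamma$ identified as a bounded-index normal FC-subgroup of $G$, a quantitative form of B. H. Neumann's lemma on FC-groups (exactly as in the finite-group proof of \cref{neumann}) bounds the cardinality of $[\Gamma,\Gamma]$ by $\exp(O(\alpha^{-O(1)}))$, supplying the required $H$. The ``in particular'' clause for groups of bounded rank then follows because a finitely generated virtually abelian group of bounded rank whose commutator subgroup has bounded cardinality contains an abelian subgroup of index bounded in terms of the rank.

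The main obstacle, I expect, lies in case (i): while the equidistribution statement is classical for finite groups, transferring it to subgroups of $G$ itself (rather than of a preselected finite quotient) needs the finite-generation of $G$ together with a careful handling of walks on coset spaces of subgroups of arbitrary (but finite) index, as well as a mechanism for ruling out persistent concentration of $\mu^{\ast n}$ on infinite-index subgroups. A secondary technical point is verifying that $\Gamma$ is genuinely a subgroup and obtaining the sharp integer bound $\lceil\alpha^{-1}\rceil$ on its index, rather than the weaker $O(\alpha^{-1})$ that a direct averaging argument would yield.
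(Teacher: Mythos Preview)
Your overall strategy matches the one in \cite{comm.prob}: the theorem is not proved in the present paper but quoted from there, and the argument does factor as (a) an equidistribution statement---both kinds of sequence $M$ \emph{measure index uniformly} in the sense defined in the introduction, which is precisely your ``equidistribution lemma''---followed by (b) a Neumann-style averaging argument yielding many elements with bounded-index centraliser (this is \cite[Proposition~2.1]{comm.prob}, of which \cref{prop:large.centraliser} here is the higher-$k$ analogue).

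There is, however, a genuine gap. The set $\Gamma=\{g\in G:[G:C_G(g)]\le\lceil\alpha^{-1}\rceil\}$ is conjugation-invariant but is \emph{not} a subgroup in general, and no appeal to equidistribution can make it one: for $G=S_5$ one has $\dc(G)=7/120$ and $\lceil\alpha^{-1}\rceil=18$, yet this set contains all transpositions (class size $10$) while the product of two overlapping transpositions is a $3$-cycle of class size $20$. The correct $\Gamma$ is the \emph{FC-centre} of $G$, i.e.\ the set of elements with finite conjugacy class, which is always a characteristic subgroup. Uniform measurement of index gives $\mu_n(C_G(x))\to0$ \emph{uniformly} over $x\notin\Gamma$, so such $x$ contribute nothing to $\dc_M(G)$ in the limit; hence $\alpha\le\limsup_n\mu_n(\Gamma)$, forcing $[G:\Gamma]\le\alpha^{-1}$. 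The remaining step---bounding $|[\Gamma,\Gamma]|$ by a function of $\alpha$ alone---is not automatic from ``$\Gamma$ is a finitely generated FC-group'': one must first show that conjugacy classes in $\Gamma$ are \emph{uniformly} bounded by some $m=O(\alpha^{-O(1)})$ before the quantitative B.~H.~Neumann bound applies, and this requires a further pass of the averaging argument rather than following from your sketch as written.
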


One of the main aims of \cite{comm.prob} was to provide a concrete but more general set of hypotheses on $M$ under which \cref{thm:dc.specific} holds. This led to the following definitions.

\begin{definition}[uniform detection of index]
Let $\pi:(0,1]\to(0,1]$ be a non-decreasing function such that $\pi(\gamma)\to0$ as $\gamma\to0$. We say that a sequence $M=(\mu_n)_{n=1}^\infty$ of probability measures on a group $G$ \emph{detects index uniformly at rate $\pi$} if for every $\eps>0$ there exists $N=N(\eps)\in\N$ such that for every $m\in\N$ if $[G:H]\ge m$ then $\mu_n(H)\le\pi(\frac{1}{m})+\eps$ for every $n\ge N$. We also say simply that $M$ \emph{detects index uniformly} to mean that there exists some $\pi$ such that $M$ detects index uniformly at rate $\pi$.
\end{definition}
The word `uniform' in the above definition refers to the requirement that the definition be satisfied by the same $N(\eps)$ for all subgroups $H$.

\begin{definition}[uniform measurement of index]
We say that a sequence $M=(\mu_n)_{n=1}^\infty$ of probability measures on a group $G$ \emph{measures index uniformly} if $\mu_n(xH)\to1/[G:H]$ uniformly over all $x\in G$ and all subgroups $H$ of $G$ (here we define $1/[G:H]=0$ if $[G:H]=\infty$).
\end{definition}
Note that if a sequence of probability measures on a group measures index uniformly then it also detects index uniformly with rate $\iota:(0,1]\to(0,1]$ defined by $\iota(x)=x$.

The second author shows in \cite[Theorems 1.11 \& 1.12]{comm.prob} that on a finitely generated group every sequence of measures corresponding to the steps of a random walk measures index uniformly, as does every almost-invariant sequence of measures. This is a key ingredient in the proof of \cref{thm:dc.specific}.

In the present paper we combine Shalev's techniques with those of \cite{comm.prob} to generalise \cref{thm:shalev} similarly to arbitrary finitely generated groups, as follows.
\begin{theorem}\label{thm:dc_k}
Let $G$ be a finitely generated group of rank at most $r$, and let $M=(\mu_n)_{n=1}^\infty$ be a sequence of measures that
detects index uniformly at rate $\pi$. 
Suppose that $\dc_M^k(G)\ge\alpha>0$. Then $G$ has a $k$-step nilpotent subgroup of index at most
$O_{r,k,\pi,\alpha}(1)$.
\end{theorem}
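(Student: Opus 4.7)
The plan is to proceed by induction on $k$. The base case $k=0$ is immediate, since $\dc^0_M(G)\ge\alpha$ forces $\mu_n(\{1\})\ge\alpha-o(1)$, and uniform detection of index applied to the trivial subgroup then bounds $|G|$ in terms of $\alpha$ and $\pi$, so that $\{1\}$ itself is a $0$-step nilpotent subgroup of bounded index; one could equally well start the induction at $k=1$, which is essentially the main theorem of \cite{comm.prob} under the uniform-detection hypothesis.

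For the inductive step I start from the identity $c^{(k)}(x_1,\ldots,x_{k+1})=[c^{(k-1)}(x_1,\ldots,x_k),x_{k+1}]$; Fubini yields
\[
\dc^k_M(G)=\limsup_{n\to\infty}\E_{\bar x\sim\mu_n^k}\bigl[\mu_n(C_G(c^{(k-1)}(\bar x)))\bigr]\ge\alpha.
\]
A Markov-type estimate then gives, for arbitrarily large $n$, a set of $\bar x\in G^k$ of $\mu_n^k$-mass at least $\alpha/2$ on which $\mu_n(C_G(c^{(k-1)}(\bar x)))\ge\alpha/2$, and uniform detection of index at rate $\pi$ translates this into the uniform numerical bound $[G:C_G(c^{(k-1)}(\bar x))]\le M_0$ on the same set, for a constant $M_0=M_0(\alpha,\pi)$.

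Next I would use that $G$ is $r$-generated, which bounds the number $L=L(r,M_0)$ of normal subgroups of $G$ of index at most $M_0!$. For each such $\bar x$ the normal core of $C_G(c^{(k-1)}(\bar x))$ is one of these $L$ subgroups, so pigeonholing (and passing to a subsequence of $n$) produces a single $N\trianglelefteq G$, $[G:N]\le M_0!$, with $N\subseteq C_G(c^{(k-1)}(\bar x))$ on a set of $\mu_n^k$-mass $\ge\alpha/(2L)$. Equivalently, in the quotient $\ol G:=G/C_G(N)$ the image of $c^{(k-1)}(\bar x)$ is trivial on this set. Since the pushforward sequence $\bar M=(\bar\mu_n)$ on $\ol G$ still detects index uniformly at rate $\pi$ (subgroups of $\ol G$ lift to subgroups of $G$ of the same index and same $\mu_n$-mass) and $\ol G$ still has rank at most $r$, this reads $\dc^{k-1}_{\bar M}(\ol G)\ge\alpha/(2L)$, and the inductive hypothesis applies to $\ol G$.

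The induction produces a $(k-1)$-step nilpotent subgroup $H/C_G(N)\le\ol G$ of index $O_{r,k-1,\pi,\alpha/(2L)}(1)$. I would then take $H_1:=H\cap N$, which satisfies $[G:H_1]\le[G:H]\cdot[G:N]=O_{r,k,\pi,\alpha}(1)$; the chain of inclusions
\[
\gamma_k(H_1)\le\gamma_k(H)\cap N\le C_G(N)\cap N=Z(N)
\]
then forces $\gamma_{k+1}(H_1)\le[Z(N),N]=1$, so $H_1$ is $k$-step nilpotent, as required. The main obstacle is engineering this descent, that is, converting a hypothesis on the $k$-fold commutator in $G$ into a hypothesis of the same form on the $(k-1)$-fold commutator in a quotient of $G$. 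The conversion uses uniform detection of index twice---once to bound centralizer indices and once to transfer the hypothesis to $\ol G$---together with the finite-generation of $G$ to extract a single target $N$ via pigeonhole; the closing commutator calculation with $C_G(N)\cap N=Z(N)$ is then a short standard identity.
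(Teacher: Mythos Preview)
Your proposal is correct and follows essentially the same inductive strategy as the paper: use Neumann's splitting to find many $\bar x$ whose $(k-1)$-fold commutator has centraliser of bounded index, pass to the quotient by the centraliser of a suitable bounded-index normal subgroup, and apply the inductive hypothesis there. The only real difference is in how that normal subgroup is produced: the paper simply takes $\Gamma$ to be the intersection of \emph{all} subgroups of $G$ of index at most $1/\gamma$ (bounded index by \cref{fin.gen.fin.ind}), which is automatically contained in every such centraliser, whereas you pigeonhole on the finitely many possible normal cores and pass to a subsequence. The paper's route is cleaner---it avoids the subsequence extraction and yields $\dc_M^{k-1}(G/C_G(\Gamma))\ge\alpha/2$ rather than $\alpha/(2L)$---but since $L=O_{r,\pi,\alpha}(1)$ this makes no difference to the final $O_{r,k,\pi,\alpha}(1)$ bound, and your closing commutator calculation with $C_G(N)\cap N=Z(N)$ is exactly the paper's ``$N\cap\Gamma$ is $k$-step nilpotent'' step.
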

The following specific cases of interest of \cref{thm:dc_k} then follow from \cite[Theorems 1.11 \& 1.12]{comm.prob}.
\begin{theorem}\label{thm:dc_k.specific}
Let $G$ be a finitely generated group of rank at most $r$, and let $k\in\N$. Suppose that either
\begin{enumerate}
\item $\mu$ is a symmetric, finitely supported generating probability measure on $G$ with $\mu(\{1\})>0$, and $M=(\mu^{\ast n})_{n=1}^\infty$ is the sequence of measures corresponding to the steps of the random walk on $G$ with respect to $\mu$; or
\item $G$ is amenable and $M=(\mu_n)_{n=1}^\infty$ is an almost-invariant sequence of probability measures on $G$.
\end{enumerate}
Suppose that $\dc_M^k(G)\ge\alpha>0$. Then $G$ has a $k$-step nilpotent subgroup of index at most $O_{r,k,\alpha}(1)$.
\end{theorem}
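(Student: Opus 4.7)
The plan is to derive this as an immediate corollary of \cref{thm:dc_k}. The hypothesis of \cref{thm:dc_k} requires a rate function $\pi$ at which $M$ detects index uniformly, so the only work is to supply such a $\pi$ in each of the two cases.

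For both (i) and (ii), I would invoke \cite[Theorems 1.11 and 1.12]{comm.prob}, which assert precisely that on a finitely generated group the sequence of convolution powers of a symmetric finitely supported generating measure with mass at the identity measures index uniformly, and that any almost-invariant sequence of probability measures on an amenable group measures index uniformly. By the remark immediately following the definition of uniform measurement of index, any sequence that measures index uniformly also detects index uniformly at the rate $\iota \colon (0,1] \to (0,1]$ given by $\iota(x) = x$. In particular, in each case the sequence $M$ detects index uniformly at the universal rate $\iota$, which does not depend on $G$, $\mu$, $r$, $k$, or $\alpha$.

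Applying \cref{thm:dc_k} with this $\pi = \iota$, the hypothesis $\dc_M^k(G) \ge \alpha$ produces a $k$-step nilpotent subgroup of index at most $O_{r,k,\iota,\alpha}(1)$; since $\iota$ is fixed once and for all, this bound is of the desired form $O_{r,k,\alpha}(1)$. The only potential subtlety, which is not really a difficulty at all here, is to make sure the dependence on $\pi$ in \cref{thm:dc_k} is indeed absorbed into the implicit constant — but because $\iota$ is a single explicit function common to both sampling regimes, no further argument is needed.
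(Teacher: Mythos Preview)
Your proposal is correct and matches the paper's own justification essentially verbatim: the paper derives \cref{thm:dc_k.specific} directly from \cref{thm:dc_k} via \cite[Theorems 1.11 \& 1.12]{comm.prob}, exactly as you do, and the observation that uniform measurement of index yields uniform detection at the fixed rate $\iota(x)=x$ (hence the $\pi$-dependence disappears) is precisely the point.
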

We prove \cref{thm:dc_k} in \cref{sec:quant}.

\bigskip
Shalev actually proves a slightly more general result than \cref{thm:shalev}. Given a finite group $G$ and an element $g\in G$, write
\[
P^k(G,g)=\mu(\{(x_1,\ldots,x_{k+1})\in G^{k+1}:[x_1,\ldots,x_{k+1}]=g\}),
\]
noting that $\dc^k(G)=P^k(G,1)$. What Shalev shows is that \cref{thm:shalev} remains true if the assumption that $\dc^k(G)\ge\alpha>0$ is replaced by the weaker assumption that $P^k(G,g)\ge\alpha>0$ for some $g\in G$.

We can adapt the statement of \cref{thm:dc_k} similarly. First, given a probability measure $\mu$ on a group $G$, define
\[
P_\mu^k(G,g)=\mu(\{(x_1,\ldots,x_{k+1})\in G^{k+1}:[x_1,\ldots,x_{k+1}]=g\}).
\]
Then, given a sequence $M=(\mu_n)_{n=1}^\infty$ of probability measures on $G$, define
\[
P_M^k(G,g)=\limsup_{n\to\infty}P_{\mu_n}^k(G,g).
\]
\begin{prop}\label{prop:non-id}
Let $G$ be a group, and let $M=(\mu_n)_{n=1}^\infty$ be a sequence of probability measures on $G$ that measures index uniformly. Then $P_M^k(G,1)\ge P_M^k(G,g)$ for every $g\in G$.
\end{prop}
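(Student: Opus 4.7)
The strategy is to imitate the classical finite-group argument showing that for any $g$, the number of tuples with $[x_1,\ldots,x_{k+1}] = g$ is at most the number with $[x_1,\ldots,x_{k+1}] = 1$, using uniform measurement of index to translate this to the measure setting.

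The first step is the following coset observation. For each $(x_1,\ldots,x_k) \in G^k$, write $y = [x_1,\ldots,x_k]$ and
\[
S_g(y) \coloneqq \{ x \in G : [y,x] = g \}.
\]
If $x, x_0 \in S_g(y)$, then $x^{-1} y x = x_0^{-1} y x_0$, so $x_0 x^{-1} \in C_G(y)$; hence $S_g(y)$ is either empty or a right coset of the centralizer $C_G(y)$, and $S_1(y) = C_G(y)$ itself. By Fubini,
\[
P_{\mu_n}^k(G,g) = \int_{G^k} \mu_n\bigl(S_g([x_1,\ldots,x_k])\bigr) \, d\mu_n^{\otimes k}(x_1,\ldots,x_k),
\]
with the analogous formula for $g = 1$.

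The crux is then to apply uniform measurement of index to compare $\mu_n(S_g(y))$ with $\mu_n(S_1(y))$. Since every right coset $Hz$ equals the left coset $z(z^{-1}Hz)$ of a conjugate subgroup of the same index, uniform measurement of index for left cosets transfers immediately to right cosets. So given $\varepsilon > 0$ there exists $N = N(\varepsilon)$ such that for all $n \geq N$ and all $y \in G$,
\[
\mu_n(S_g(y)) \leq \frac{1}{[G:C_G(y)]} + \varepsilon \quad\text{and}\quad \mu_n(S_1(y)) \geq \frac{1}{[G:C_G(y)]} - \varepsilon,
\]
using the convention $1/[G:H] = 0$ when $[G:H] = \infty$ and noting that the inequality for $\mu_n(S_g(y))$ is trivial when $S_g(y) = \emptyset$. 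Combining these gives the pointwise bound $\mu_n(S_g(y)) \leq \mu_n(S_1(y)) + 2\varepsilon$ for every $y \in G$.

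Integrating this bound over $(x_1,\ldots,x_k)$ yields $P_{\mu_n}^k(G,g) \leq P_{\mu_n}^k(G,1) + 2\varepsilon$ for all $n \geq N$; taking $\limsup_{n \to \infty}$ and then letting $\varepsilon \to 0$ completes the proof. I do not anticipate any serious obstacle: the entire difficulty has been packaged into the uniform measurement hypothesis, and both the coset structure of $S_g(y)$ and the left-to-right transfer of uniform measurement are elementary.
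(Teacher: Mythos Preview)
Your proof is correct and follows essentially the same route as the paper: both identify $S_g(y)$ as a (possibly empty) right coset of $C_G(y)$, transfer uniform measurement of index from left to right cosets via $Hz = z(H^z)$, and then use the resulting uniform bound $\mu_n(S_g(y)) \le \mu_n(C_G(y)) + 2\varepsilon$ inside the expectation before passing to the $\limsup$. Your $\varepsilon$--$N$ bookkeeping is just a slightly more explicit rendering of the paper's uniform-convergence argument.
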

Combined with \cref{thm:dc_k} this immediately gives the following.
\begin{corollary}
Let $G$ be a finitely generated group of rank at most $r$, and let $M=(\mu_n)_{n=1}^\infty$ be a sequence of measures that measures index uniformly on $G$. Let $g\in G$, and suppose that $P_M^k(G,g)\ge\alpha>0$. Then $G$ has a $k$-step nilpotent subgroup of index at most $O_{r,k,\alpha}(1)$.
\end{corollary}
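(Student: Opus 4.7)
The corollary is stated as an immediate consequence of the two results immediately preceding it, so the plan is simply to chain them together. The key observation is that a sequence of measures that measures index uniformly also detects index uniformly, at the concrete rate $\iota(x)=x$ (this is explicitly noted in the paper immediately after the definition of uniform measurement of index); hence the hypothesis of \cref{thm:dc_k} is automatically available with $\pi=\iota$.

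With this in hand, I would argue as follows. Starting from the hypothesis $P_M^k(G,g)\ge\alpha>0$, apply \cref{prop:non-id} (whose hypothesis, that $M$ measures index uniformly, is exactly what we have assumed) to deduce
\[
\dc_M^k(G)=P_M^k(G,1)\ge P_M^k(G,g)\ge\alpha.
\]
This reduces the problem to the setting of \cref{thm:dc_k}. Since $M$ detects index uniformly at the fixed rate $\iota$, applying \cref{thm:dc_k} produces a $k$-step nilpotent subgroup of index at most $O_{r,k,\iota,\alpha}(1)$. The rate $\iota$ is an absolute function depending on no parameters, so this bound collapses to $O_{r,k,\alpha}(1)$, which is the desired conclusion.

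There is essentially no obstacle: the only thing to verify is the reduction of uniform measurement of index to uniform detection of index at the rate $\iota(x)=x$, which is noted in the text, and the quantitative bookkeeping that $\iota$ contributes no additional parameters to the implicit constants. Everything else is a direct invocation of the two preceding results.
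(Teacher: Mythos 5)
Your proof is correct and matches the paper's intended argument exactly: combine \cref{prop:non-id} with \cref{thm:dc_k}, using the observation (stated in the text after the definition of uniform measurement of index) that uniform measurement implies uniform detection at the fixed rate $\iota(x)=x$, so the $\pi$-dependence in the bound of \cref{thm:dc_k} contributes no extra parameters. Nothing is missing.
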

We prove \cref{prop:non-id} in \cref{sec:quant}.

\bigskip
It is easy to see that if a finitely generated group $G$ has a nilpotent subgroup of finite index then $\dc_M^k(G)>0$ for every sequence $M$ of measures measuring index uniformly on $G$. The conclusion of \cref{thm:dc_k} is therefore qualitatively optimal. Note, however, that \cref{thm:dc.specific} shows that in the case $k=1$ \cref{thm:dc_k} can be improved quantitatively---in the sense that the bounds can be made independent of the rank of $G$---at the expense of concluding that $G$ is bounded-by-abelian-by-bounded as in \cref{neumann}, rather than virtually abelian.

%The following result suggests that a quantitatively optimal result for $\dc_M^k(G)$ might also allow for bounded-by-nilpotent-by-bounded groups in its conclusion.
%\begin{prop}\label{prop:converse}
%Let $m,d,k\in\N$. Let $G$ be a finitely generated group, let $\Gamma$ be a subgroup of $G$ of index at most $m$, and let $H$ be a subgroup of cardinality at most $d$ such that $\Gamma/H$ is $k$-step nilpotent. Let $M=(\mu_n)_{n=1}^\infty$ be a sequence of measures that measures index uniformly on $G$. Then $\dc_M^k(G)\ge\frac{1}{m^{k+1}d}$.
%\end{prop}
%
%We prove \cref{prop:converse} in \cref{sec:quant}.
%
%\begin{question}
%Can one make the bounds in \cref{thm:dc_k} independent of the rank of $G$ at the expense of broadening the qualitative conclusion?
%\end{question}

The following result suggests that a quantitatively optimal result for $\dc_M^k(G)$ must also allow for bounded-by-($k$-step nilpotent)-by-bounded groups in its conclusion.
\begin{prop}\label{prop:converse}
Let $m,d,k\in\N$. Let $G$ be a finitely generated group, let $\Gamma$ be a subgroup of $G$ of index at most $m$, and let $H$ be a subgroup of cardinality at most $d$ such that $\Gamma/H$ is $k$-step nilpotent. Let $M=(\mu_n)_{n=1}^\infty$ be a sequence of measures that measures index uniformly on $G$. Then $\dc_M^k(G)\ge\frac{1}{m^{k+1}d}$.
\end{prop}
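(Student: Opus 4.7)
The plan is to lower-bound $\dc_M^k(G)$ by exhibiting a concrete event on which the $(k+1)$-fold commutator is forced to vanish and whose $\mu_n$-probability tends to a quantity of order at least $1/(m^{k+1}d)$. Specifically, I would work with the event that $x_1,\dots,x_k\in\Gamma$ and $x_{k+1}\in C_\Gamma(a)$, where $a:=[x_1,\dots,x_k]$; on such tuples $[x_1,\dots,x_{k+1}]=[a,x_{k+1}]=1$ by definition of the centraliser.

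The central algebraic input is a uniform index bound $[G:C_\Gamma(a)]\le md$ valid for every $a\in\gamma_k(\Gamma)$. Since $\Gamma/H$ is $k$-step nilpotent we have $\gamma_{k+1}(\Gamma)\subseteq H$, so for every $y\in\Gamma$ the conjugate $y^{-1}ay=a\cdot[a,y]$ lies in the coset $aH$. Hence the $\Gamma$-conjugacy class of $a$ has at most $|H|\le d$ elements, giving $[\Gamma:C_\Gamma(a)]\le d$, and multiplying by $[G:\Gamma]\le m$ yields the claim. The crucial feature of this bound is that it is independent of $a$.

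With this in hand I would apply the hypothesis that $M$ measures index uniformly, both to $\Gamma$ and to the family of centralisers $C_\Gamma(a)$: one obtains $\mu_n(\Gamma)\ge 1/m-o(1)$ and $\mu_n(C_\Gamma(a))\ge 1/(md)-o(1)$, the latter convergence being uniform in $a$. A Fubini-type computation then gives
\[
\dc_{\mu_n}^k(G)\ge\int_{\Gamma^k}\mu_n\bigl(C_\Gamma([x_1,\dots,x_k])\bigr)\,\dd\mu_n(x_1)\cdots \dd\mu_n(x_k)\ge\bigl(\tfrac{1}{md}-o(1)\bigr)\mu_n(\Gamma)^k,
\]
and taking $\limsup_n$ delivers the desired bound $1/(m^{k+1}d)$.

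The main subtlety is the uniformity in $a$: the definition of ``measures index uniformly'' insists that $\mu_n(xK)\to 1/[G:K]$ uniformly over all cosets $xK$ of all subgroups $K$, which is precisely what licenses the passage from the pointwise estimate on each $C_\Gamma(a)$ to its integral over $\Gamma^k$. Were the convergence merely pointwise one could handle only finitely many centralisers at a time, whereas the family $\{C_\Gamma(a):a\in\gamma_k(\Gamma)\}$ is in general infinite; so it is essential that the uniform version of the definition be invoked.
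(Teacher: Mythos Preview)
Your proof is correct and follows essentially the same route as the paper: both arguments bound the $\Gamma$-conjugacy class of $a=[x_1,\dots,x_k]$ by $|H|\le d$ via $[a,y]\in\gamma_{k+1}(\Gamma)\subseteq H$, deduce $[G:C_\Gamma(a)]\le md$, and then appeal to uniform measurement of index. Your write-up is in fact slightly more explicit than the paper's, which simply says ``the result follows by uniform measurement of index'' after establishing the centraliser bound; your Fubini computation and your emphasis on the uniformity over $a$ spell out exactly what that sentence means.
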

However, Eberhard \cite[\S2.7]{eberhard.thesis} has shown that finite groups $G$ satisfying $\dc_M^k(G)\ge1/4$ are not necessarily bounded-by-($2$-step nilpotent)-by-bounded.
\begin{question}
Is there a `nice' quantitative algebraic characterisation of the finitely generated groups $G$ satisfying $\dc_M^k(G)\ge\alpha>0$ with bounds that do not depend on the rank of $G$?
\end{question}
This issue has also been raised by Ben Green (private communication). In \cref{sec:rank} we give examples for all $k \geq 1$ to show that the dependence of the bound on the rank is necessary in \cref{thm:dc_k} as stated.

\cref{thm:dc_k,prop:converse} combine to give a new proof of the following folklore result.

\begin{corollary}[finitely generated finite-by-($k$-step nilpotent) groups are virtually $k$-step nilpotent]\label{cor:fbn}
Let $r,d,k\in\N$. Let $G$ be a finitely generated group of rank at most $r$, and let $H$ be a subgroup of cardinality at most $d$ such that $G/H$ is $k$-step nilpotent. Then $G$ contains a $k$-step nilpotent subgroup of index at most $O_{r,d,k}(1)$.
\end{corollary}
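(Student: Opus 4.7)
The plan is to deduce \cref{cor:fbn} immediately by feeding the hypotheses into \cref{prop:converse} and \cref{thm:dc_k} (or rather its specific instance \cref{thm:dc_k.specific}), using the random walk on $G$ as our sampling sequence.

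First, since $G$ is finitely generated, I would fix a finite symmetric generating set $S \subset G$ and let $\mu$ be the uniform probability measure on $S \cup \{1\}$, so that $\mu$ is a symmetric, finitely supported generating probability measure with $\mu(\{1\}) > 0$. Let $M = (\mu^{\ast n})_{n=1}^\infty$ be the associated random walk. By \cite[Theorem 1.11]{comm.prob} (invoked above in the discussion following the definitions), $M$ measures index uniformly on $G$, and therefore also detects index uniformly at the linear rate $\iota(x) = x$.

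Next, applying \cref{prop:converse} with $\Gamma = G$ (so $m = 1$) and the given $H$ of cardinality at most $d$ with $G/H$ $k$-step nilpotent, I obtain
\[
\dc_M^k(G) \ge \frac{1}{1^{k+1} \cdot d} = \frac{1}{d} > 0.
\]
Since $M$ detects index uniformly at the rate $\iota$, which depends on no parameters, \cref{thm:dc_k} (equivalently, case (i) of \cref{thm:dc_k.specific}) now yields a $k$-step nilpotent subgroup of $G$ of index at most $O_{r,k,1/d}(1) = O_{r,d,k}(1)$, as required.

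There is essentially no obstacle here: the content is entirely in \cref{thm:dc_k} and \cref{prop:converse}, and the argument is just a matter of checking that the random walk sequence $M$ is a valid choice for both statements simultaneously. The only minor subtlety worth flagging is that \cref{prop:converse} requires $M$ to \emph{measure} index uniformly (the stronger condition), while \cref{thm:dc_k} only requires that $M$ \emph{detect} index uniformly; the random walk measures satisfy the stronger property, so a single choice of $M$ serves both needs.
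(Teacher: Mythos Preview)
Your proposal is correct and follows exactly the approach the paper indicates: the paper simply states that \cref{thm:dc_k} and \cref{prop:converse} combine to give the corollary, and you have written out the details of that combination accurately, including the choice of a random-walk sequence $M$ that measures index uniformly so that both results apply with a fixed rate function.
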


In \cref{prop:needfg} we show that a finite-by-($k$-step nilpotent) group $G$ need not be virtually $k$-step nilpotent, and so the assumption that $G$ is finitely generated is necessary in \cref{cor:fbn}. However, if such a group $G$ is in addition assumed to be residually finite then we can deduce that $G$ is virtually $k$-step nilpotent, regardless of finite generation: see \cref{rem:rfnilp}.

\bigskip\noindent\textsc{Equations over virtually nilpotent groups.}
The second author shows in \cite[Theorem 1.17]{comm.prob} that if $G$ is a finitely generated group and  $M=(\mu_n)_{n=1}^\infty$ is a sequence of measures that measures index uniformly on $G$ then the $\limsup$ in the definition of $\dc_M$ is actually a limit, and that this limit does not depend on the choice of $M$. In the present work we extend this to $\dc_M^k$ for $k\ge2$, as follows.

\begin{theorem}\label{cor:indep}
Let $G$ be a finitely generated group. Then $\dc_M^k(G)$ takes the same value for all sequences $M$ of measures that measure index uniformly on $G$, and for every such sequence $M$ the $\limsup$ in the definition of $\dc_M^k(G)$ actually a limit.
\end{theorem}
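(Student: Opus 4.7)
The plan is to split into two cases according to whether $G$ is virtually $k$-step nilpotent. Any sequence measuring index uniformly on $G$ also detects index uniformly at rate $\iota(x)=x$, so \cref{thm:dc_k} applies to every such $M$. If $G$ is not virtually $k$-step nilpotent then \cref{thm:dc_k} forces $\dc_M^k(G)=0$: otherwise $\dc_M^k(G)\ge\alpha>0$ for some $\alpha$ would yield a $k$-step nilpotent subgroup of finite index. Since $\dc_{\mu_n}^k(G)\ge 0$ we then have $\liminf_n\dc_{\mu_n}^k(G)=0=\limsup_n\dc_{\mu_n}^k(G)$, so the $\limsup$ is a limit whose value $0$ is independent of $M$.

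Henceforth assume $G$ is finitely generated and virtually $k$-step nilpotent. Standard results on such groups produce a characteristic finite-index subgroup $N\le G$ which is torsion-free and $k$-step nilpotent. Set $m=[G:N]$ and partition $G^{k+1}$ into the $m^{k+1}$ cosets of $N^{k+1}$. For each tuple $g_\bullet=(g_1,\dots,g_{k+1})$ of coset representatives define $\varphi_{g_\bullet}\colon N^{k+1}\to G$ by
\[
\varphi_{g_\bullet}(n_1,\dots,n_{k+1})=[g_1n_1,\dots,g_{k+1}n_{k+1}].
\]
A direct check using the standard multilinear-commutator identities shows that $\varphi_{g_\bullet}$ is a polynomial mapping in the sense of Leibman. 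Its image modulo $N$ is the fixed element $[\bar g_1,\dots,\bar g_{k+1}]\in G/N$ depending only on the coset-tuple: if this element is non-trivial the entire coset contributes nothing to $\dc_{\mu_n}^k(G)$, while if it is trivial then $\varphi_{g_\bullet}$ takes values in the torsion-free $k$-step nilpotent group $N$.

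For every coset-tuple of the latter type, \cref{thm:leibman} applies to $\varphi_{g_\bullet}$ and yields a dichotomy: either $\varphi_{g_\bullet}\equiv 1$ on all of $N^{k+1}$, or its zero set $Z_{g_\bullet}\subseteq N^{k+1}$ is sparse in a sense forcing $\mu_n(Z_{g_\bullet})\to 0$ under any sequence measuring index uniformly. Let $S$ be the set of coset-tuples of the first type. Uniform measurement of index gives $\mu_n(g_\bullet N^{k+1})\to 1/m^{k+1}$, so summing the finitely many contributions gives
\[
\lim_{n\to\infty}\dc_{\mu_n}^k(G)=\frac{|S|}{m^{k+1}},
\]
a genuine limit depending only on $G$ (through $N$ and $S$) and not on $M$. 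The main obstacle is ensuring that the notion of sparseness delivered by \cref{thm:leibman} is exactly strong enough to conclude $\mu_n(Z_{g_\bullet})\to 0$ for every $M$ measuring index uniformly; this is presumably why the paper develops its own generalisation of Leibman's theorem rather than appealing to the original statement, and is the step on which the proof ultimately rests.
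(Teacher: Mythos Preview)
Your argument is correct and follows essentially the same route as the paper: reduce to the virtually nilpotent case via \cref{thm:dc_k}, decompose $G^{k+1}$ into cosets of $N^{k+1}$, observe that the commutator restricted to each coset is a polynomial mapping into $N$ (this is \cref{lem:phi_t.poly}, proved via \cref{thm:leib.grp} rather than by ``standard commutator identities''), apply \cref{thm:leibman} to obtain the dichotomy, and sum. The paper packages the last three steps as the general statement \cref{thm:v.nilp.indep} about arbitrary equations over virtually nilpotent groups, and then invokes \cref{prop:fin.quot.=>seq} (together with \cref{lem:prod.meas} and \cref{lem:coset.negl}) to pass from ``negligible by finite quotients of $N^{k+1}$'' to $\mu_n\to0$ in $G^{k+1}$; this is precisely the step you flag in your final paragraph, and the paper's machinery supplies it.
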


In view of \cref{thm:dc_k}, in proving \cref{cor:indep} it is enough to consider virtually nilpotent groups, and in that context we actually prove something more general:~we show that for \emph{any} equation $\varphi$ over a finitely generated virtually nilpotent group $G$ the numbers $\dd\varphi_M(G)$ are well behaved in the sense of \cref{cor:indep}. To do this we use a notion of sparsity that is independent of any particular sequence of measures, as follows.
\begin{definition}
Given a group $G$, a set $V\subset G$ is said to be \emph{negligible by finite quotients of $G$} if for every $\eps>0$ there exists a finite-index normal subgroup $N\lhd G$ such that $|VN/N|\le\eps|G/N|$.
\end{definition}
The utility of this definition lies in the fact that if $V\subset G^k$ is negligible by finite quotients of $G^k$ then $\mu_n(V)\to0$ as $n\to\infty$ for every sequence $(\mu_n)_{n=1}^\infty$ of measures that measure index uniformly on $G$; we prove this below as \cref{prop:fin.quot.=>seq}.

It is worth noting that, in the language of profinite groups, a subset $V \subset G$ is negligible by finite quotients if and only if its closure in the profinite completion $\widehat{G}$ of $G$ has zero Haar measure. Indeed, this happens if and only if for any $\eps > 0$ we can find an open subset $U \subseteq \widehat{G}$ of Haar measure $\geq 1-\eps$ such that $U \cap V = \varnothing$.

Our result is then as follows.
\begin{theorem}\label{thm:v.nilp.indep}
Let $G$ be a finitely generated virtually nilpotent group, and let $N$ be a torsion-free nilpotent normal subgroup of finite index in $G$. Let $\varphi$ be an equation in $k$ variables over $G$. Then the set
\[
G_\varphi=\{(g_1,\ldots,g_k)\in G^k:\varphi(g_1,\ldots,g_k)=1\}
\]
of solutions to $\varphi$ is the union of a set of cosets of $N^k$ and a set that is negligible by finite quotients of $G^k$.
\end{theorem}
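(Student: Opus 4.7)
The plan is to reduce the statement to \cref{thm:leibman} by partitioning $G^k$ into the finitely many cosets of $N^k$ and analysing $\varphi$ on each coset separately. Fix a transversal $T$ for $N$ in $G$ and write each $(g_1,\ldots,g_k)\in G^k$ uniquely as $(a_1n_1,\ldots,a_kn_k)$ with $a_i\in T$ and $n_i\in N$. For each tuple $\mathbf{a}=(a_1,\ldots,a_k)\in T^k$, define $\Phi_{\mathbf{a}}\colon N^k\to G$ by $\Phi_{\mathbf{a}}(n_1,\ldots,n_k)=\varphi(a_1n_1,\ldots,a_kn_k)$, so that the solutions of $\varphi=1$ in the coset $\mathbf{a}N^k$ are precisely the image under $\mathbf{n}\mapsto\mathbf{a}\cdot\mathbf{n}$ of $\Phi_{\mathbf{a}}^{-1}(1)\subset N^k$.

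The key step is to recognise each $\Phi_{\mathbf{a}}$ as a polynomial mapping in Leibman's sense. Expanding the word $\varphi$ and repeatedly using the normality of $N$ to push constants of $G$ through the $n_i^{\pm1}$ via conjugation, one rewrites $\Phi_{\mathbf{a}}(\mathbf{n})$ as $g_0\cdot m_1\cdots m_s$, where $g_0\in G$ is a constant and each $m_j$ is a fixed automorphism of $N$ applied to some coordinate $n_i^{\pm1}$. Since degree-$1$ polynomial mappings are closed under finite products, this exhibits $\Phi_{\mathbf{a}}$ as a polynomial mapping of bounded degree. Setting $\mathbf{n}=\mathbf{1}$ identifies $g_0=\varphi(\mathbf{a})$, so the image of $\Phi_{\mathbf{a}}$ lies in the single coset $\varphi(\mathbf{a})N$. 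Thus if $\varphi(\mathbf{a})\notin N$, this coset does not meet $\{1\}$ and $\mathbf{a}N^k$ contributes nothing to $G_\varphi$; otherwise $\Phi_{\mathbf{a}}$ is a genuine polynomial mapping into the torsion-free nilpotent group $N$, to which \cref{thm:leibman} applies.

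Invoking \cref{thm:leibman} for each of the remaining $\mathbf{a}$'s, we conclude that either $\Phi_{\mathbf{a}}\equiv1$ on $N^k$ --- in which case the entire coset $\mathbf{a}N^k$ is contained in $G_\varphi$ --- or the root set $\Phi_{\mathbf{a}}^{-1}(1)\subset N^k$ is sparse in the sense of \cref{thm:leibman}. The translate $\mathbf{a}\cdot\Phi_{\mathbf{a}}^{-1}(1)\subset G^k$ is obtained from a subset of $N^k$ by left multiplication in $G^k$, which preserves negligibility by finite quotients (a translate $hV$ has $|hVK/K|=|VK/K|$ for every finite-index normal $K\lhd G^k$), so it remains negligible in $G^k$. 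Summing over the finitely many $\mathbf{a}\in T^k$ writes $G_\varphi$ as a union of finitely many full $N^k$-cosets together with a finite union of sets negligible by finite quotients of $G^k$; a finite union of negligible sets is still negligible (intersect the subgroups supplied by the definition applied to each piece), which gives the required decomposition. The main technical obstacle will be to make sure that the notion of sparsity produced by \cref{thm:leibman} really does imply negligibility by finite quotients of $G^k$ rather than just of $N^k$ --- this presumably uses the residual finiteness of the torsion-free nilpotent $N$ together with the finite index $[G:N]$ to lift finite-quotient data from $N^k$ to $G^k$.
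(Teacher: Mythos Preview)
Your proposal is correct and follows essentially the same route as the paper: decompose $G^k$ into cosets of $N^k$, observe that on each coset the equation becomes a polynomial mapping $N^k\to N$ (the paper packages this as \cref{lem:phi_t.poly}, pushing constants to the right rather than the left, but the argument is identical), and then apply \cref{thm:leibman} with $\Gamma=\{1\}$. Two small points: your phrase ``degree-$1$ polynomial mappings are closed under finite products'' is not literally true---what you need, and what the paper invokes as \cref{thm:leib.grp}, is that polynomial mappings into a \emph{nilpotent} target form a group under pointwise product; and the ``main technical obstacle'' you flag at the end (lifting negligibility from $N^k$ to $G^k$) is exactly the paper's \cref{lem:coset.negl}, which needs only that $N^k$ has finite index in $G^k$, not residual finiteness.
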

Recall that virtually nilpotent groups are always virtually torsion-free, so by defining the subgroup $N$ in \cref{thm:v.nilp.indep} we are merely fixing notation, rather than imposing an additional hypothesis. In particular, \cref{thm:v.nilp.indep,prop:fin.quot.=>seq} immediately imply the following result.
\begin{corollary}\label{cor:indep.arbitrary}
Let $G$ be a finitely generated virtually nilpotent group, and let $\varphi$ be an equation over $G$. Then $\dd\varphi_M(G)$ is the same for all sequences $M$ of measures that measure index uniformly on $G$, and for every such sequence $M$ the $\limsup$ in the definition of $\dd\varphi_M(G)$ is actually a limit.
\end{corollary}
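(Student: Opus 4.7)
The plan is to combine \cref{thm:v.nilp.indep} and \cref{prop:fin.quot.=>seq} exactly as the paragraph preceding the corollary suggests. Fix the torsion-free nilpotent normal subgroup $N\lhd G$ of finite index provided by \cref{thm:v.nilp.indep}, let $M=(\mu_n)_{n=1}^\infty$ be any sequence of probability measures that measures index uniformly on $G$, and (abusing notation as in the paper) write $\mu_n$ also for the product measure on $G^k$.

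By \cref{thm:v.nilp.indep}, decompose $G_\varphi = C\cup V$, where $C$ is a union of $c$ cosets of $N^k$ in $G^k$ (with $c\le[G:N]^k<\infty$) and $V$ is negligible by finite quotients of $G^k$. Every coset of $N^k$ in $G^k$ has the form $x_1N\times\cdots\times x_kN$, so applying uniform measurement of index coordinatewise yields
\[
\mu_n\bigl(x_1N\times\cdots\times x_kN\bigr) \;=\; \prod_{i=1}^k \mu_n(x_iN) \;\longrightarrow\; [G:N]^{-k}
\]
as $n\to\infty$, uniformly in the choice of coset representatives. Summing the $c$ cosets that constitute $C$ gives $\mu_n(C)\to c/[G:N]^k$.

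For the negligible part, \cref{prop:fin.quot.=>seq} gives $\mu_n(V)\to0$. Since $C\subseteq G_\varphi\subseteq C\cup V$, this forces
\[
\dd\varphi_{\mu_n}(G) \;=\; \mu_n(G_\varphi) \;\longrightarrow\; \frac{c}{[G:N]^k}.
\]
In particular the $\limsup$ in the definition of $\dd\varphi_M(G)$ is a genuine limit, and its value $c/[G:N]^k$ is a function of $\varphi$ and $N$ alone, independent of the sampling sequence $M$. This is the corollary.

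The argument is essentially mechanical given the two stated inputs, so I do not anticipate any real obstacle. The only point worth checking carefully is the application of \cref{prop:fin.quot.=>seq}: it must be invoked for a set living in $G^k$, not in $G$, and one must verify that the product measures $\mu_n\times\cdots\times\mu_n$ are the correct object to feed into its hypothesis. This is fine because the proposition is stated so that $V\subset G^k$ is negligible by finite quotients of $G^k$, and the paper's abuse of notation identifies $\mu_n$ on $G^k$ with the $k$-fold product.
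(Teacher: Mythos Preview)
Your argument is correct and is exactly the approach the paper intends: the paper states that \cref{cor:indep.arbitrary} is immediate from \cref{thm:v.nilp.indep} and \cref{prop:fin.quot.=>seq}, and you have simply made explicit the squeeze $\mu_n(C)\le\mu_n(G_\varphi)\le\mu_n(C)+\mu_n(V)$ that justifies this. Your check that \cref{prop:fin.quot.=>seq} applies to the product measure on $G^k$ is also appropriate, since that is precisely how the proposition is stated.
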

In particular, combined with \cref{thm:dc_k} this implies \cref{cor:indep}. Indeed, in the case of a residually finite group \cref{thm:dc_k,thm:v.nilp.indep} even give the value of $\dc_M^k(G)$ in terms of $\dc^k$ of the finite quotients of $G$, as follows.
\begin{corollary}\label{cor:indep.resid.fin}
Let $G$ be a residually finite group and let $M$ be a sequence of measures on $G$ that measures index uniformly. Let $H_1>H_2>\cdots$ be a sequence of finite-index normal subgroups of $G$ such that $\bigcap_{m=1}^\infty H_m=\{1\}$. Then $\dc^k(G/H_m)\to\dc_M^k(G)$ as $m\to\infty$.
\end{corollary}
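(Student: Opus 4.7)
The plan is to prove matching bounds $\dc_M^k(G)\le\liminf_m\dc^k(G/H_m)$ and $\limsup_m\dc^k(G/H_m)\le\dc_M^k(G)$. For the first, let $V:=\{(x)\in G^{k+1}:c^{(k)}(x)=1\}$ and $V_m\subseteq(G/H_m)^{k+1}$ its analogue, and let $q_m\colon G\to G/H_m$ be the quotient map. Uniform measurement of index forces $(q_m)_\ast\mu_n\to$ uniform on $G/H_m$ as $n\to\infty$, so $\lim_n\mu_n^{k+1}(q_m^{-1}(V_m))=\dc^k(G/H_m)$, and the inclusion $V\subseteq q_m^{-1}(V_m)$ then yields $\dc_M^k(G)\le\dc^k(G/H_m)$ for every $m$.

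For the reverse bound, split on whether $G$ is virtually $k$-step nilpotent. If it is not, then $\dc_M^k(G)=0$ by \cref{thm:dc_k}. If $\dc^k(G/H_{m_j})\ge\alpha$ along a subsequence, \cref{thm:shalev} produces $k$-step nilpotent subgroups $K_j/H_{m_j}\le G/H_{m_j}$ of index at most $D=D(r,k,\alpha)$; since a finitely generated group has only finitely many subgroups of index $\le D$, pigeonhole extracts a further subsequence with $K_j=K$ constant. Then $\gamma_{k+1}(K)\subseteq\bigcap_j H_{m_j}=\{1\}$, so $K$ is itself a $k$-step nilpotent subgroup of $G$ of finite index—contradiction.

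If $G$ is virtually $k$-step nilpotent, pick a torsion-free $k$-step nilpotent normal $N\lhd G$ of finite index. \cref{thm:v.nilp.indep} writes $V=C\cup E$ with $C$ a union of $|C|$ cosets of $N^{k+1}$ and $E$ negligible by finite quotients of $G^{k+1}$, and combined with \cref{prop:fin.quot.=>seq} this gives $\dc_M^k(G)=|C|/[G:N]^{k+1}$. For $m$ with $H_m\subseteq N$, set $W_m:=q_m^{-1}(V_m)\setminus V=\{(x):c^{(k)}(x)\in H_m\setminus\{1\}\}$ and decompose it over the finitely many $N^{k+1}$-cosets in $G^{k+1}$. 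A coset $(a)N^{k+1}$ with $c^{(k)}(\bar a)\ne 1$ in $G/N$ cannot meet $W_m$, since then $c^{(k)}((a)N^{k+1})\subseteq c^{(k)}(\bar a)N$ is disjoint from $H_m\subseteq N$. A coset on which the map $\psi_a(n):=c^{(k)}(an)$ is constantly equal to $z\in N$ misses $q_m^{-1}(V_m)$ once $m$ is large enough that $z\notin H_m$, with the case $z=1$ already accounted for in $C$. On the remaining finitely many cosets $\psi_a\colon N^{k+1}\to N$ is a non-constant Leibman polynomial map, for which a quantitative form of \cref{thm:leibman} applied to the composite $N^{k+1}\xrightarrow{\psi_a}N\twoheadrightarrow N/H_m$ forces $\lim_n\mu_n^{k+1}(W_m\cap(a)N^{k+1})\to 0$ as $m\to\infty$. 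Summing over the finitely many cosets gives $\lim_m\lim_n\mu_n^{k+1}(W_m)=0$, whence $\dc^k(G/H_m)\to\dc_M^k(G)$.

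The main obstacle is this last density estimate: bounding the $\mu_n^{k+1}$-measure of $\{(x)\in(a)N^{k+1}:\psi_a(xa^{-1})\in H_m\}$ for a non-constant polynomial map $\psi_a$. The key point is that $\bigcap_m H_m=\{1\}$ ensures the composite $\psi_a\bmod H_m$ remains non-constant for all sufficiently large $m$, and then \cref{thm:leibman}, together with the uniform index-measurement property of $M$ (which transports sparseness in finite quotients of $N^{k+1}$ to vanishing $\mu_n^{k+1}$-density), delivers the required bound.
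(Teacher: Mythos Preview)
Your overall architecture is sound and matches what the paper has in mind: establish the easy inequality $\dc_M^k(G)\le\dc^k(G/H_m)$, then split on whether $G$ is virtually $k$-step nilpotent. Your Case~1 argument (pigeonholing the Shalev subgroups $K_j$ among the finitely many subgroups of bounded index, then using $\bigcap_j H_{m_j}=\{1\}$) is correct and is essentially the intended reduction via \cref{thm:dc_k}. Note that both this pigeonhole step and \cref{thm:dc_k} require $G$ to be finitely generated, which you are implicitly assuming; this should be made explicit.

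The genuine gap is in the final density estimate of Case~2. You want to show that for a non-constant polynomial $\psi_a\colon N^{k+1}\to N$ and the given sequence $(H_m)$, the proportion of $(N/H_m)^{k+1}$ mapping into a fixed $H_m$-coset under the induced map tends to~$0$. Your proposed justification---``apply \cref{thm:leibman} to the composite $N^{k+1}\xrightarrow{\psi_a}N\twoheadrightarrow N/H_m$''---does not work. \cref{thm:leibman} requires a \emph{closed} subgroup $\Gamma$ of the target; in the finite group $N/H_m$ every element has finite order, so the only closed subgroup is $N/H_m$ itself, and the hypothesis $\varphi(G)\not\subset x\Gamma$ is never satisfied. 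Applying \cref{thm:leibman} instead to $\psi_a\colon N^{k+1}\to N$ with $\Gamma=H_m$ fails for the same reason: $H_m$ need not be closed in the torsion-free group $N$. Nor can one simply pass to the isolator $\overline{H_m}$, since $\bigcap_m\overline{H_m}$ can be all of $N$ even when $\bigcap_m H_m=\{1\}$ (take $N=\Z$ and $H_m=3^m\Z$).

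What is actually needed here is a statement of the form: if $\psi\colon N^{k+1}\to N$ is a non-constant polynomial mapping and $(H_m)$ is a decreasing sequence of finite-index normal subgroups with trivial intersection, then the density of $\psi^{-1}(zH_m)$ in $(N/H_m)^{k+1}$ tends to~$0$. This is true but is not an immediate consequence of \cref{thm:leibman}, because the negligibility there is witnessed by quotients of the form $N/N^{(p)}$ (see \cref{prop:negl.precise}), and the sequence $(H_m)$ need not be cofinal among finite-index normal subgroups---for instance $H_m=3^m\Z$ in $\Z$ never sits inside $2\Z$. One route is to adapt the coordinate argument of \cref{prop:negl.precise} directly to the quotients $N/H_m$, using that $\bigcap_m H_m=\{1\}$ forces the coordinate polynomial to remain non-trivial modulo~$H_m$; but this requires genuine work beyond invoking \cref{thm:leibman} as a black box. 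You have correctly identified this as ``the main obstacle,'' but the paragraph you offer does not close it.
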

Note also that if $G$ is assumed a priori to be virtually nilpotent then \cref{thm:v.nilp.indep} similarly gives the value of $\dd\varphi_M(G)$ in \cref{cor:indep.arbitrary}.
\begin{corollary}\label{cor:resid.fin.arb}
Let $G$ be a finitely generated virtually nilpotent group, and let $\varphi$ be an equation over $G$. Let $H_1>H_2>\cdots$ be a sequence of finite-index normal subgroups of $G$ such that $\bigcap_{m=1}^\infty H_m=\{1\}$. Then $\dd\varphi(G/H_m)\to\dd\varphi_M(G)$ as $m\to\infty$.
\end{corollary}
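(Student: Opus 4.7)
The plan is to express both $\dd\varphi(G/H_m)$ and $\dd\varphi_M(G)$ as limits along the \emph{same} sequence $M = (\mu_n)_{n=1}^\infty$ measuring index uniformly on $G$, and then show their difference tends to $0$ as $m\to\infty$. Such a sequence $M$ exists because $G$ is amenable (being virtually nilpotent), and by \cref{cor:indep.arbitrary} the value $\dd\varphi_M(G) = \lim_n \mu_n(G_\varphi)$ is independent of $M$. Since $H_m \trianglelefteq G$, the preimage $\varphi^{-1}(H_m)$ is a union of $H_m^k$-cosets, so uniform measurement of index yields
\[
\lim_{n\to\infty} \mu_n(\varphi^{-1}(H_m)) = \frac{|\varphi^{-1}(H_m)/H_m^k|}{[G:H_m]^k} = \dd\varphi(G/H_m).
\]
Therefore $\dd\varphi(G/H_m) - \dd\varphi_M(G) = \lim_n \mu_n(\varphi^{-1}(H_m\setminus\{1\}))$, and it suffices to show the right-hand side tends to $0$ as $m\to\infty$.

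To control this, I would decompose $G^k$ into its $[G:N]^k$ cosets $T = gN^k$ of $N^k$ and analyse the restriction $\varphi_T(n) := \varphi(gn)$ on each. Each such $\varphi_T \colon N^k \to G$ is a polynomial mapping in the sense of Leibman, with image contained in the coset $\varphi(g)N$, and as in the proof of \cref{thm:v.nilp.indep} it is either constant on $T$ (with some value $v(T)\in G$) or non-constant. The set $\{v(T) : v(T)\neq 1\}$ is finite, so the hypothesis $\bigcap_m H_m = \{1\}$ ensures that for $m$ large enough no constant coset with $v(T)\neq 1$ contributes to $\varphi^{-1}(H_m\setminus\{1\})$; the constant cosets with $v(T)=1$ lie in $G_\varphi$ and so contribute nothing to the difference either.

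For a non-constant coset $T$, write $\varphi_T(n) = \varphi(g)\psi_T(n)$ where $\psi_T \colon N^k \to N$ is a non-constant polynomial mapping into the torsion-free nilpotent group $N$; then $T \cap \varphi^{-1}(H_m) = \psi_T^{-1}(\varphi(g)^{-1}H_m \cap N)$, whose target in $N$ is either empty or a single coset of the finite-index normal subgroup $H_m \cap N$. The proof thus reduces to the following uniform Leibman-type sparsity statement, which I expect to be the main technical obstacle: for any non-constant polynomial mapping $\psi \colon N^k \to N$ between finitely generated torsion-free nilpotent groups, and any decreasing sequence $(L_m)$ of finite-index normal subgroups of $N$ with $\bigcap_m L_m = \{1\}$, one has $\lim_n \mu_n(\psi^{-1}(vL_m)) \to 0$ as $m\to\infty$, uniformly in $v \in N$. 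I would prove this by induction on the nilpotency class of $N$, passing to the quotient by the centre so that the induced polynomial on $(N/L_m)^k$ remains non-constant and its fibres have proportion tending to $0$ in the finite quotient. Summing over the finitely many non-constant cosets $T$ then gives the required vanishing.
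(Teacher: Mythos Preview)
The paper does not give an explicit proof of this corollary; it is asserted as a consequence of \cref{thm:v.nilp.indep}, with the easy inequality \eqref{eq:fin.quot.lim} noted in the subsequent remark. Your overall strategy is the natural one and matches what any proof must do: identify $\dd\varphi(G/H_m)=\lim_n\mu_n(\varphi^{-1}(H_m))$ via uniform measurement of index, split $G^k$ into $N^k$-cosets, and handle constant and non-constant cosets separately. The treatment of constant cosets is correct.

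The genuine gap is in the non-constant case. You correctly isolate the required ``uniform Leibman'' statement as the crux, but your proposed proof --- induction on the nilpotency class of $N$, passing to $N/Z(N)$ --- does not work as stated. The problem is that the images $L_m Z(N)/Z(N)$ of the subgroups $L_m=H_m\cap N$ in $N/Z(N)$ need \emph{not} have trivial intersection, so the inductive hypothesis cannot be invoked. For example, in $N=\Z^2$ take $L_m=\langle (m,1),(0,m^2)\rangle$: then $\bigcap_m L_m=\{0\}$, but the projection to the second coordinate is all of $\Z$ for every $m$. More generally, one cannot simply appeal to \cref{thm:leibman}, since the subgroups $L_m$ are not closed in $N$, and negligibility by finite quotients alone is insufficient: the set $\{2^i:i\ge0\}\subset\Z$ is negligible via $2^m\Z$ but has density bounded away from zero in $\Z/3^m\Z$.

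What is actually needed is a quantitative statement at the level of coordinates: for a non-constant polynomial $p:\Z^M\to\Z$ (obtained via \cref{prop:reduc.to.Z,prop:coord.poly}), the density of $\{x:p(x)\equiv a\pmod{\ell}\}$ tends to $0$ as $\ell\to\infty$, uniformly in $a$. This is true and follows from elementary arguments bounding the number of solutions to polynomial congruences, but it is a genuine additional ingredient beyond what \cref{thm:v.nilp.indep} and \cref{thm:leibman} provide, and your sketch does not supply it.
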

\begin{remark}
It is easy to see, for $G$, $M$ and $(H_i)$ as in \cref{cor:indep.resid.fin} and $\varphi$ an arbitrary equation over $G$, that the sequence $\left( \dd\varphi(G/H_m) \right)_{m=1}^\infty$ is decreasing and bounded below by $0$---and hence converges to some limit---and that
\begin{equation}\label{eq:fin.quot.lim}
\dd\varphi_M(G) \leq \lim_{m \to \infty} \dd\varphi(G/H_m).
\end{equation}
\cref{cor:indep.resid.fin,cor:resid.fin.arb} say that if $\varphi = c^{(k)}$, or if $G$ is virtually nilpotent, then we have equality in \eqref{eq:fin.quot.lim}.
\end{remark}

An equation $\varphi$ in $k$ variables over a group $G$ is a \emph{probabilistic identity} with respect to a sequence $M$ of measures if $\dd\varphi_M(G) > 0$; it is a \emph{coset identity} if there exists a finite-index subgroup $H<G$ and elements $g_1,\ldots,g_k\in G$ such that $\varphi(g_1H,\ldots,g_kH)=1$. Shalev \cite[Corollary 1.2]{shalev} notes that \cref{thm:shalev} implies that if $[x_1,\ldots,x_{k+1}]$ is a coset identity in a finitely generated residually finite group $G$ then $G$ is virtually $k$-step nilpotent. The following is immediate from \cref{thm:v.nilp.indep}.
\begin{corollary}
Let $G$ be a finitely generated virtually nilpotent group, and let $M$ be a sequence of measures on $G$ that measures index uniformly. Then an equation $\varphi$ in $G$ is a probabilistic identity with respect to $M$ if and only if it is a coset identity.
\end{corollary}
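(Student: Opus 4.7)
The plan is to derive the corollary immediately from \cref{thm:v.nilp.indep} and the stated consequence (\cref{prop:fin.quot.=>seq}) that negligible sets have measure tending to zero under any index-measuring sequence. The argument splits into the two obvious implications, and both reduce to manipulating measures of cosets.

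For the easier direction, suppose $\varphi$ is a coset identity, so there exist a finite-index subgroup $H \leq G$ and elements $g_1,\ldots,g_k \in G$ with $\varphi(g_1h_1,\ldots,g_kh_k) = 1$ for all $h_i \in H$. Then $g_1H \times \cdots \times g_kH \subseteq G_\varphi$. Since $M$ measures index uniformly on $G$, the measures $\mu_n(g_iH)$ each converge to $1/[G:H]$, whence
\[
\dd\varphi_{\mu_n}(G) \;\geq\; \prod_{i=1}^k \mu_n(g_iH) \;\longrightarrow\; \frac{1}{[G:H]^k} \;>\; 0,
\]
which forces $\dd\varphi_M(G) \geq [G:H]^{-k} > 0$, so $\varphi$ is a probabilistic identity with respect to $M$.

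For the converse, assume $\dd\varphi_M(G) > 0$, fix a torsion-free nilpotent normal finite-index subgroup $N \lhd G$, and apply \cref{thm:v.nilp.indep} to decompose $G_\varphi = C \cup V$, where $C$ is a (necessarily finite) union of cosets of $N^k$ and $V$ is negligible by finite quotients of $G^k$. By \cref{prop:fin.quot.=>seq} we have $\mu_n(V) \to 0$, and hence
\[
\limsup_{n \to \infty} \mu_n(C) \;\geq\; \limsup_{n \to \infty}\bigl(\mu_n(G_\varphi) - \mu_n(V)\bigr) \;=\; \dd\varphi_M(G) \;>\; 0.
\]
In particular $C$ is non-empty, so $G_\varphi$ contains some coset $g_1N \times \cdots \times g_kN$; that is, $\varphi(g_1N,\ldots,g_kN) = 1$, witnessing $\varphi$ as a coset identity (with respect to $N$).

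There is no real obstacle: the content was already packaged into \cref{thm:v.nilp.indep} and \cref{prop:fin.quot.=>seq}. The only minor point to be careful about is noting that \cref{prop:fin.quot.=>seq} is being applied on $G^k$ rather than on $G$ itself, which is legitimate because a sequence measuring index uniformly on $G$ induces one measuring index uniformly on $G^k$ in the product sense used above, so the negligible set $V \subset G^k$ indeed has vanishing measure under $\mu_n \times \cdots \times \mu_n$.
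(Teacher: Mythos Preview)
Your proof is correct and is precisely the argument the paper has in mind when it declares the corollary ``immediate from \cref{thm:v.nilp.indep}''; you have simply written out the details of both implications. One small remark: your closing caveat about applying \cref{prop:fin.quot.=>seq} on $G^k$ is unnecessary, since that proposition is already stated for subsets $V\subset G^k$ with the paper's standing convention that $\mu_n(V)$ denotes $(\mu_n\times\cdots\times\mu_n)(V)$.
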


We prove \cref{thm:v.nilp.indep} in \cref{sec:polys,sec:v.nilp} using the notion of \emph{polynomial mappings} of groups developed by Leibman and others.

\bigskip\noindent\textsc{Finite quotients.}
In the last of our main results we remove the `finitely generated' assumption from \cref{cor:shalev}, answering a question posed by Shalev \cite[Problem 3.1]{shalev}.
\begin{theorem} \label{thm:res.fin.}
Let $G$ be a residually finite group, and let $\mathcal N$ be a family of finite-index normal subgroups of $G$ that is closed under finite intersections and such that $\bigcap_{N\in\mathcal N}N=\{1\}$. Let $k\in\N$, and suppose that there exists a constant $\alpha>0$ such that $\dc^k(G/N)\ge\alpha$ for every $N\in\mathcal N$. Then $G$ has a $k$-step nilpotent subgroup of finite index.
\end{theorem}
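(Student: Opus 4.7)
Write $f(N) \coloneqq \dc^k(G/N)$ for $N\in\mathcal N$. My approach is to combine the sub-multiplicativity provided by \cref{thm:gallagher} with a ``gap near $1$'' for the degree of nilpotence of finite groups, and then exploit $\bigcap_{N\in\mathcal N} N = \{1\}$ to squeeze $\gamma_{k+1}(N_0)$ down to the identity for a suitable finite-index $N_0\in\mathcal N$.

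First, $f$ is non-increasing as $N$ shrinks, because any surjection $G/N\twoheadrightarrow G/M$ sends solutions of $c^{(k)}=1$ to solutions of $c^{(k)}=1$; since $\mathcal N$ is closed under finite intersections and $f\ge\alpha$, the value $L\coloneqq\inf_{N\in\mathcal N}f(N)\ge\alpha$ is attained as a limit along the directed system. Applying \cref{thm:gallagher} to the finite group $G/N$ with its normal subgroup $M/N$, for $M\supseteq N$ both in $\mathcal N$, gives
\[
f(N)\ =\ \dc^k(G/N)\ \le\ \dc^k(G/M)\cdot\dc^k(M/N)\ =\ f(M)\cdot\dc^k(M/N),
\]
hence $\dc^k(M/N)\ge f(N)/f(M)$. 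Choosing $N_0\in\mathcal N$ ``deep enough in the tail'' so that $f(N_0)$ is as close to $L$ as desired, we obtain the uniform lower bound $\dc^k(N_0/N)\ge L/f(N_0)$ for every $N\in\mathcal N$ with $N\subseteq N_0$, which can be made arbitrarily close to $1$.

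Next I would establish a gap lemma: there exists $\beta_k<1$ such that any finite group $H$ with $\dc^k(H)>\beta_k$ is $k$-step nilpotent. The base case $k=1$ is Gustafson's $5/8$ theorem. For the inductive step, any non-central element of a finite group has centralizer of index at least $2$, so conditioning on whether $c^{(k-1)}(x_1,\ldots,x_k)\in Z(H)$ yields
\[
\dc^k(H)\ \le\ \tfrac12\bigl(1+\dc^{k-1}(H/Z(H))\bigr);
\]
if $\dc^k(H)$ is close enough to $1$ this forces $\dc^{k-1}(H/Z(H))>\beta_{k-1}$, so $H/Z(H)$ is $(k-1)$-step nilpotent by the inductive hypothesis, and therefore $\gamma_k(H)\le Z(H)$ and $\gamma_{k+1}(H)=\{1\}$, i.e.\ $H$ is $k$-step nilpotent. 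This gives $\beta_k = (1+\beta_{k-1})/2$ and explicitly $1-\beta_k = 3\cdot 2^{-k-2}$.

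Finally, with $\beta_k$ in hand I would take $N_0\in\mathcal N$ satisfying $L/f(N_0)>\beta_k$: then $\dc^k(N_0/N)>\beta_k$ for every $N\in\mathcal N$ with $N\subseteq N_0$, so the gap lemma gives that $N_0/N$ is $k$-step nilpotent, i.e.\ $\gamma_{k+1}(N_0)\le N$. Because $\mathcal N$ is closed under finite intersections and $\bigcap_{N\in\mathcal N}N=\{1\}$, the subfamily $\{N\in\mathcal N:N\subseteq N_0\}$ also intersects to $\{1\}$, whence $\gamma_{k+1}(N_0)=\{1\}$ and $N_0$ is $k$-step nilpotent. Since $[G:N_0]<\infty$, this shows $G$ is virtually $k$-step nilpotent. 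The main obstacle is the gap lemma itself: the decisive step is the descent to the central quotient $H/Z(H)$, which converts a $\dc^k$ statement back into a $\dc^{k-1}$ one and makes the induction close cleanly; everything else is essentially bookkeeping once \cref{thm:gallagher} is available.
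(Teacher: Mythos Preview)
Your proof is correct and uses the same two ingredients as the paper: the sub-multiplicativity of $\dc^k$ with respect to quotients (\cref{thm:gallagher}) and the gap result that a finite group $H$ with $\dc^k(H)$ sufficiently close to $1$ must be $k$-step nilpotent. The paper cites the latter as a known theorem of Erfanian--Rezaei--Lescot with the identical bound $\beta_k=1-3\cdot 2^{-(k+2)}$, whereas you rederive it via the inequality $\dc^k(H)\le\tfrac12\bigl(1+\dc^{k-1}(H/Z(H))\bigr)$; your argument for this is fine.

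Where you differ is in the organisation. The paper argues \emph{from the top}: starting with $G_0=G$, as long as $G_{i-1}$ is not $k$-step nilpotent one finds a witnessing commutator and an $N_i\in\mathcal N$ missing it, sets $G_i=G_{i-1}\cap N_i$, and observes that $G_{i-1}/G_i$ is not $k$-step nilpotent; then sub-multiplicativity gives $\dc^k(G/G_n)\le\gamma_k^{\,n}$, forcing the process to terminate after at most $\log\alpha/\log\gamma_k$ steps. You argue \emph{from the bottom}: pick $N_0$ with $f(N_0)$ close to the infimum $L\ge\alpha$, use sub-multiplicativity in the form $\dc^k(N_0/N)\ge f(N)/f(N_0)\ge L/f(N_0)$ to push these quotients past the gap, and conclude $\gamma_{k+1}(N_0)\subseteq\bigcap_{N\subseteq N_0}N=\{1\}$. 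Both routes are short once the two ingredients are in hand; the paper's version reads off an explicit step-count bound, while yours avoids the iterative construction at the cost of a slightly less explicit choice of $N_0$.
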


This can be restated as follows: if $G$ is a profinite group and the set $\mathcal{N}_k(G) = \{ (x_1,\ldots,x_{k+1}) \in G^{k+1} \mid [x_1,\ldots,x_{k+1}]=1 \}$ has positive Haar measure in $G^{k+1}$, then $G$ has an open $k$-step nilpotent subgroup. Indeed, the statement for a profinite group $G$ can be deduced by taking $\mathcal{N}$ in \cref{thm:res.fin.} to be a collection of normal subgroups forming a neighbourhood basis for the identity in $G$, and \cref{thm:res.fin.} can be deduced from this statement by considering the profinite closure $\widehat{G}$ of a given residually finite group $G$.

Thus, \cref{thm:res.fin.} generalises a result of L\'evai and Pyber \cite[Theorem 1.1 (iii)]{lp}, who prove it in the case $k=1$. This was generalised in a different direction by Hofmann and Russo, who prove in \cite[Theorem 1.2]{hr} that any compact group $G$ such that $\mathcal{N}_1(G)$ has positive Haar measure in $G^2$ has an open abelian subgroup. We therefore ask whether the following is true.
\begin{question}\label{qu:compact}
Let $G$ be a compact group, and suppose that $\mathcal{N}_k(G) = \{ (x_1,\ldots,x_{k+1}) \in G^{k+1} \mid [x_1,\ldots,x_{k+1}]=1 \}$ has positive Haar measure in $G^{k+1}$. Does $G$ have an open $k$-step nilpotent subgroup?
\end{question}

In \cite[\S1]{lp}, L\'evai and Pyber also note that, for $k = 1$, the finite index of the abelian subgroup in \cref{thm:res.fin.} need not be bounded in terms of $\alpha$, citing the examples of direct products of abelian groups and extra-special groups. In \cref{sec:rank} we generalise these examples to show that, for any $k \geq 1$, the index of the $k$-step nilpotent subgroup coming from \cref{thm:res.fin.} need not be bounded in terms of $k$ and $\alpha$.

The key ingredient in the proof of \cref{thm:res.fin.} is the following result of independent interest on finite groups.
\begin{theorem} \label{thm:gallagher}
Let $G$ be a finite group and let $N \trianglelefteq G$. Then $\dc^k(G) \leq \dc^k(N) \dc^k(G/N)$ for all $k\in\N$.
\end{theorem}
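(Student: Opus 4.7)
I would first translate the stated inequality into the equivalent form $|V_G| \leq |V_N| \cdot |V_{G/N}|$, where $V_H := \{\mathbf{x} \in H^{k+1} : [x_1, \ldots, x_{k+1}] = 1\}$, by multiplying through by $|G|^{k+1} = (|N| \cdot |G/N|)^{k+1}$.

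The first ingredient is a pointwise centralizer bound: for every $g \in G$,
\[
|C_G(g)| \leq |C_N(g)| \cdot |C_{G/N}(gN)|,
\]
which follows from the well-defined injection $C_G(g)/C_N(g) \hookrightarrow C_{G/N}(gN)$ sending $xC_N(g) \mapsto xN$. Using the identity $|V_H| = \sum_{\mathbf{x} \in H^k} |C_H([x_1, \ldots, x_k])|$ (obtained by letting $x_{k+1}$ range over $C_H([\mathbf{x}])$), applying the pointwise bound to each term, and grouping by the projection $\bar{\mathbf{x}} \in (G/N)^k$ yields
\[
|V_G| \leq \sum_{\bar{\mathbf{x}}} |C_{G/N}([\bar{\mathbf{x}}])| \cdot S(\bar{\mathbf{x}}), \qquad S(\bar{\mathbf{x}}) := \sum_{\mathbf{n} \in N^k} |C_N([t_1 n_1, \ldots, t_k n_k])|,
\]
where $\mathbf{t}$ is any lift of $\bar{\mathbf{x}}$.

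The main work is then proving the \emph{key lemma} $S(\mathbf{t}) \leq |V_N|$ for every $\mathbf{t} \in G^k$; combined with the display above, this gives $|V_G| \leq |V_N| \cdot \sum_{\bar{\mathbf{x}}} |C_{G/N}([\bar{\mathbf{x}}])| = |V_N| \cdot |V_{G/N}|$. Rewriting $|C_N(z)| = |\{m \in N : [z,m] = 1\}|$, the key lemma is equivalent to the fiber bound $|\{(\mathbf{n}, m) \in N^{k+1} : [t_1 n_1, \ldots, t_k n_k, m] = 1\}| \leq |V_N|$: among $(k+1)$-tuples lifting the coset configuration $(\bar{t}_1, \ldots, \bar{t}_k, 1) \in (G/N)^{k+1}$, the number with trivial $(k+1)$-commutator does not exceed $|V_N|$.

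For $k = 1$ the key lemma reduces to $\sum_{a \in tN} |C_N(a)| = \sum_{m \in N} |tN \cap C_G(m)| \leq \sum_m |N \cap C_G(m)| = |V_N|$, where the inequality is the standard observation that a coset $tN$ intersects any subgroup in either the empty set or a coset of the intersection of $N$ with that subgroup; this is essentially Gallagher's original argument. For $k \geq 2$, I would proceed by induction on $k$: summing over $n_k$ and applying the commutator expansion $[y, t_k n_k] = [y, n_k] \cdot [y, t_k]^{n_k}$ (with $y = [t_1 n_1, \ldots, t_{k-1} n_{k-1}]$) to isolate the $n_k$-dependence and reduce the inner expression to an instance of the key lemma at index $k - 1$. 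Controlling the conjugation twist $(\cdot)^{n_k}$ in this inductive step while preserving the bound is the main technical obstacle.
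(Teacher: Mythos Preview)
Your overall setup is correct and matches the paper's reduction: you both restate the theorem as $|V_G|\le|V_N|\cdot|V_{G/N}|$ and reduce to a ``fiber bound'' asserting that the number of solutions to $[x_1,\ldots,x_{k+1}]=1$ lying in a fixed product of cosets $t_1N\times\cdots\times t_kN\times N$ is at most $|V_N|$. Your centralizer inequality $|C_G(g)|\le|C_N(g)|\cdot|C_{G/N}(gN)|$ is a clean way to strip off the last coordinate; the paper instead proves the slightly stronger statement that \emph{every} fiber $x_1N\times\cdots\times x_{k+1}N$ has at most $|V_N|$ solutions, but the crux is the same. Your treatment of $k=1$ is fine and is indeed Gallagher's argument.

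The genuine gap is exactly where you flag it: the inductive step for $k\ge2$. Your proposed expansion $[y,t_kn_k]=[y,n_k]\,[y,t_k]^{n_k}$ introduces an $n_k$-dependent conjugation that couples all the later coordinates, and there is no evident way to disentangle this while preserving the inequality. The paper does \emph{not} induct on $k$ in the way you suggest. Instead, it first shows (via the identity $[(xn)^{-1},g]=[g,xn]^{(xn)^{-1}}$) that it suffices to prove, for every $g\in G$ and every coset $xN$, the inequality
\[
f_k(xN,g,N,\ldots,N)\le f_k(N,g,N,\ldots,N).
\]
This single-coset statement is then attacked by an entirely different mechanism: using the commutator identity $[zy,g,n_3,\ldots,n_{k+1}]=[z,g,n_3^{\alpha_3^{-1}},\ldots,n_{k+1}^{\alpha_{k+1}^{-1}}]^{\alpha_{k+2}}\,[y,g,n_3,\ldots,n_{k+1}]$, one builds a directed graph on $N^{k-1}$ whose edges (labelled by elements $y\in xN$ with $[y,g,n_3,\ldots,n_{k+1}]=1$) encode bijections between solution sets in different powers $x^iN$. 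One then shows that the counts $f_k(x^iN,g,n_3,\ldots,n_{k+1})$ are periodic in a precise sense along the levels of this graph, and a short combinatorial lemma (if $r_jh_{j+1}=r_{j+1}h_j$ for all $j\in\Z/d\Z$ then $\sum r_jh_{j+1}\le\sum r_jh_j$) finishes the job. This graph-periodicity argument is the missing idea; without it, or something of comparable strength, your inductive sketch does not close.
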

\cref{thm:gallagher} generalises the main result of Gallagher \cite{gallagher}, who proves it for $k = 1$. It also generalises a theorem by Moghaddam, Salemkar and Chiti \cite[Theorem A]{msc}, who prove \cref{thm:gallagher} when the centraliser of every element of $G$ is normal. As is noted in \cite[Section 2]{erl}, the centraliser of every element of $G$ being normal implies that $G$ is $3$-step nilpotent; this is an extremely strong hypothesis for \cref{thm:gallagher}, rendering it trivial for $k\ge3$, for example.

We prove \cref{thm:res.fin.,thm:gallagher} in \cref{sec:gallagher}.

\bigskip\noindent\textsc{Degree of nilpotence with respect to uniform measures on balls.}
If $G$ is generated by a finite set $X$, one may naturally define the degree of nilpotence of $G$ using the sequence of measures $(\mu_n)_{n=1}^\infty$ defined by taking $\mu_n$ to be the uniform probability measure on the ball of radius $n$ in $G$ with respect to $X$. The main problem with adapting our results to this setting is that, in general, the sequence $\mu_n$ does not measure index uniformly (see the remark immediately after Theorem 1.12 of \cite{comm.prob}).
\begin{question}\label{qu:dc_k}
Let $G$ be a group with a finite generating set $X$, let $\mu_n$ be the uniform probability measure on the ball of radius $n$ in $G$ with respect to $X$, and write $M=(\mu_n)_{n=1}^\infty$. Suppose that $G$ is not virtually $k$-step nilpotent. Do we have $\dc^k_M(G)=0$?
\end{question}
If $M$ is defined using the uniform probability measures on the balls with respect to a finite generating set as in \cref{qu:dc_k}, and if $G$ is virtually nilpotent, then it is well known and easy to check that $M$ is an almost-invariant sequence of probability measures on $G$. It therefore follows from \cite[Theorem 1.12]{comm.prob} that $M$ measures index uniformly. In light of \cref{thm:dc_k}, a positive answer to \cref{qu:dc_k} would therefore extend both \cref{thm:dc_k,cor:indep} to the sequence of uniform probability measures on the balls with respect to a finite generating set.

\cref{qu:dc_k} seems to be difficult in general, although the answer is positive in some cases. For example, in \cref{yago} we present an argument that was communicated to us by Yago Antol\'in answering \cref{qu:dc_k} for hyperbolic groups (see \cref{hypnonilp}).

\bigskip\noindent\textsc{Acknowledgements.} The authors are grateful to Yago Antol\'in, Jack Button, Thiebout Delabie, Ben Green, Ana Khukhro, Ashot Minasyan, Aner Shalev and Alain Valette for helpful conversations, and to an anonymous referee for a careful reading of the manuscript and a number of helpful suggestions.

\section{The algebraic structure of probabilistically nilpotent groups} \label{sec:quant}

In this section we study the relation between $\dc_M^k(G)$ and the existence of finite-index $k$-step nilpotent subgroups of $G$, proving \cref{thm:dc_k}, \cref{prop:non-id} and \cref{prop:converse}. We start our proof of \cref{thm:dc_k} with the following version of \cite[Proposition 2.1]{comm.prob}, which was itself based on an argument of Neumann \cite{neumann}.

\begin{prop}\label{prop:large.centraliser}
Let $k\in\N$. Let $G$ be a group and let $M=(\mu_n)_{n=1}^\infty$ be a sequence of measures on $G$ that detects index uniformly at rate $\pi$. Let $\alpha\in(0,1]$, and suppose that $\dc_M^k(G)\ge\alpha$. Let $\gamma\in(0,1)$ be such that $\pi(\gamma)<\alpha$, and write
\[
X=\{(x_1,\ldots,x_k)\in G^k:[G:C_G([x_1,\ldots,x_k])]\le\textstyle\frac{1}{\gamma}\}.
\]
Then $\limsup_{n\to\infty}\mu_n(X)\ge\alpha-\pi(\gamma)$.
\end{prop}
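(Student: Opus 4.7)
The plan is to reduce the hypothesis $\dc_M^k(G) \ge \alpha$ to a statement about centralisers of $k$-fold commutators via Fubini, and then exploit the uniform detection of index property to bound the mass coming from tuples with large-index centraliser. The starting observation is that $[x_1,\ldots,x_{k+1}]=1$ is equivalent to $x_{k+1} \in C_G([x_1,\ldots,x_k])$, so integrating out the last variable gives
\[
\dc_{\mu_n}^k(G) = \int_{G^k} \mu_n\bigl(C_G([x_1,\ldots,x_k])\bigr)\, d\mu_n^{\otimes k}(x_1,\ldots,x_k).
\]

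Next I would split this integral into the contributions from $X$ and from $G^k\setminus X$. On $X$ the integrand is trivially at most $1$, so this piece contributes at most $\mu_n^{\otimes k}(X)$. To handle the complement, I would set $m \coloneqq \lfloor 1/\gamma\rfloor + 1$, an integer satisfying $1/m < \gamma$; by definition of $X$, every $(x_1,\ldots,x_k) \notin X$ yields a centraliser $C_G([x_1,\ldots,x_k])$ of index at least $m$. Fix any $\eps>0$. The uniform detection of index property then supplies a single $N=N(\eps)$, independent of the particular centraliser in question, such that $\mu_n\bigl(C_G([x_1,\ldots,x_k])\bigr) \le \pi(1/m)+\eps$ for all $n \ge N$ and all $(x_1,\ldots,x_k)\notin X$; by monotonicity of $\pi$ and $1/m<\gamma$, this is at most $\pi(\gamma)+\eps$.

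Combining the two bounds yields, for all $n \ge N(\eps)$,
\[
\dc_{\mu_n}^k(G) \;\le\; \mu_n^{\otimes k}(X) + \pi(\gamma) + \eps,
\]
which after rearranging and passing to $\limsup$ gives $\limsup_n \mu_n^{\otimes k}(X) \ge \dc_M^k(G) - \pi(\gamma) - \eps \ge \alpha - \pi(\gamma)-\eps$; letting $\eps \to 0$ finishes the proof. The only delicate point, and what distinguishes this from a purely pointwise estimate, is the use of the \emph{uniform} detection of index to produce one $N(\eps)$ valid simultaneously for every centraliser appearing in the complement of $X$; beyond this the argument is a direct inductive adaptation of the $k=1$ case in \cite{comm.prob}, with $[x_1,\ldots,x_k]$ playing the role that a single group element played there.
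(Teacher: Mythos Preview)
Your proof is correct and follows essentially the same approach as the paper's: both express $\dc_{\mu_n}^k(G)$ as an expectation of $\mu_n(C_G([x_1,\ldots,x_k]))$ over $G^k$, split into $X$ and its complement, bound the $X$-part trivially by $\mu_n(X)$, and bound the complement using uniform detection of index. The paper works along a subsequence $n_i$ with $\dc_{\mu_{n_i}}^k(G)\ge\alpha-o(1)$ and absorbs the detection error into an $o(1)$ term, whereas you make the integer $m$ and the $\eps$ explicit and take the $\limsup$ at the end; these are cosmetic differences only.
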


\begin{proof}
By definition of $\dc_M$ there exists a sequence $n_1<n_2<\cdots$ such that $\dc_{\mu_{n_i}}^k(G)\ge\alpha-o(1)$. Writing $\E^{(n)}$ for expectation with respect to $\mu_n$, this means precisely that
\[
\E^{(n_i)}_{(x_1,\ldots,x_k)\in G^k}(\mu_{n_i}(C_G([x_1,\ldots,x_k])))\ge\alpha-o(1).
\]
Following Neumann \cite{neumann}, we note that therefore
\[
\begin{split}
\alpha\le\mu_{n_i}(X)\E^{(n_i)}_{(x_1,\ldots,x_k)\in X}(\mu_{n_i}(C_G([x_1,\ldots,x_k])))\qquad\qquad\qquad\qquad\qquad\qquad
       \\\qquad\qquad\qquad\qquad\qquad\qquad+\mu_{n_i}(G\backslash X)\E^{(n_i)}_{(x_1,\ldots,x_k)\in G^k\backslash X}(\mu_{n_i}(C_G([x_1,\ldots,x_k])))+o(1),
\end{split}
\]
and hence, by uniform detection of index,
\[
\alpha\le\mu_{n_i}(X)+\pi(\gamma)+o(1).
\]
The result follows.
\end{proof}

\begin{lemma}[{\cite[Proposition 1.1.1]{ls}}]\label{fin.gen.fin.ind}
Let $m,r\in\N$, and let $G$ be a group generated by $r$ elements. Then $G$ has at most $O_{m,r}(1)$ subgroups of index $m$.
\end{lemma}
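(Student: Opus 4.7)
The plan is to use the standard coset representation / action argument: a finite-index subgroup yields a homomorphism into a finite symmetric group, and such homomorphisms are few in number when the source is finitely generated.

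First, I would fix $G = \langle g_1, \ldots, g_r \rangle$ and show that to each subgroup $H \leq G$ of index $m$ one can associate a pair $(\varphi, i)$, where $\varphi : G \to \Sym_m$ is a homomorphism and $i \in \{1,\ldots,m\}$, in such a way that $H$ is recoverable from this pair. Concretely, choose a bijection $\{1,\ldots,m\} \to G/H$ sending $i \mapsto H$, and let $\varphi_H$ be the homomorphism induced by the left action of $G$ on $G/H$ transported across this bijection; then $H = \{ g \in G : \varphi_H(g)(i) = i \}$, so the pair $(\varphi_H, i)$ determines $H$.

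Next, I would bound the number of such pairs. A homomorphism $G \to \Sym_m$ is uniquely determined by the images of the $r$ generators $g_1, \ldots, g_r$, and each generator has at most $|\Sym_m| = m!$ possible images, so there are at most $(m!)^r$ homomorphisms $G \to \Sym_m$. Combined with the $m$ choices for $i$, this gives at most $m \cdot (m!)^r$ pairs, and hence at most $m \cdot (m!)^r = O_{m,r}(1)$ subgroups of index $m$.

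There is no real obstacle here; the whole content is the classical observation that a transitive $G$-action on $m$ points factors through $\Sym_m$, so the enumeration is driven by images of the finite generating set. I would simply write the argument in one short paragraph and reference Lubotzky–Segal for the precise statement quoted.
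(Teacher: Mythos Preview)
Your argument is correct and is precisely the standard coset-action proof; the paper itself gives no proof of this lemma, merely citing \cite[Proposition 1.1.1]{ls}, where the same argument appears. Nothing to add.
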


\begin{proof}[Proof of \cref{thm:dc_k}]
We combine an induction used by Shalev in \cite[Proposition 2.2]{shalev} with the proof of \cite[Theorem 1.6]{comm.prob}. If $k=0$ then $\limsup_{n\to\infty}\mu_n(\{1\})\ge\alpha$, and so
the order of $G$ is $O_{\pi,\alpha}(1)$,
and the theorem holds. For $k>0$, let $\gamma=\frac{1}{2}\inf\{\beta\in(0,1]:\pi(\beta)\ge\frac{\alpha}{2}\}$, noting that therefore $\pi(\gamma)<\frac{\alpha}{2}$. \cref{prop:large.centraliser} therefore gives
\begin{equation}\label{eq:induction}
\limsup_{n\to\infty}\mu_n(\{(x_1,\ldots,x_k)\in G^k:[G:C_G([x_1,\ldots,x_k])]\le\textstyle\frac{1}{\gamma}\})\ge\frac{\alpha}{2}.
\end{equation}
Write $\Gamma$ for the intersection of all subgroups of $G$ of index at most $\frac{1}{\gamma}$, noting that $\Gamma$ is normal and
has index $O_{r,\pi,\alpha}(1)$
by \cref{fin.gen.fin.ind}. It follows from \eqref{eq:induction} that
\[
\limsup_{n\to\infty}\mu_n(\{(x_1,\ldots,x_k)\in G^k:[x_1,\ldots,x_k]\in C_G(\Gamma)\})\ge\textstyle\frac{\alpha}{2},
\]
or equivalently that
\[
\dc_M^{k-1}(G/C_G(\Gamma))\ge\textstyle\frac{\alpha}{2}.
\]
By induction, $G/C_G(\Gamma)$ has a $(k-1)$-step nilpotent subgroup $N_0$ of index at most $O_{r,\pi,k,\alpha}(1)$. Writing $N$ for the pullback of $N_0$ to $G$, the intersection $N\cap\Gamma$ is $k$-step nilpotent and has index at most $O_{r,\pi,k,\alpha}(1)$ in $G$, and so the theorem is proved.
\end{proof}

\cref{prop:non-id} is essentially based on the following lemma.
\begin{lemma}\label{lem:coset}
Let $G$ be a group, and $u,g\in G$. Then $\{x\in G:[u,x]=g\}$ is either empty or a coset of $C_G(u)$.
\end{lemma}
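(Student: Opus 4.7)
The plan is to recognise the set as a fiber of the conjugation action and to use an orbit–stabiliser style argument. Explicitly, the equation $[u,x]=g$ can be rewritten as $x^{-1}ux = ug$, so the set in question is the preimage of the element $ug$ under the map $x\mapsto u^x$; the standard fact that fibers of such an action are cosets of the point stabiliser, which here is $C_G(u)$, will give the lemma immediately.

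Concretely, the proof I would write has two short steps. Assume the set is non-empty and fix some $x_0$ with $[u,x_0]=g$. First, for any $y\in C_G(u)$, I would simply compute
\[
[u,yx_0] = u^{-1}x_0^{-1}y^{-1}u\,y x_0 = u^{-1}x_0^{-1}u x_0 = [u,x_0] = g,
\]
using $y^{-1}u = uy^{-1}$, which shows $C_G(u)x_0$ is contained in the solution set. Conversely, if $[u,x]=g=[u,x_0]$, then $x^{-1}ux = x_0^{-1}ux_0$, which upon rearranging gives $x_0x^{-1}\in C_G(u)$, so $x\in C_G(u)x_0$.

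There is essentially no obstacle; the only point worth being careful about is the handedness of the coset. The computation above shows that the solution set is the \emph{left} coset $C_G(u)x_0$ rather than $x_0 C_G(u)$, which matters only in the non-abelian case and is reflected in using $yx_0$ (not $x_0 y$) in the first step. I would just note this briefly rather than belabouring it.
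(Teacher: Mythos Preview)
Your proof is correct and is essentially the same as the paper's: both rewrite $[u,x]=g$ as $u^x=u^{x_0}$ and identify the solution set with the left coset $C_G(u)x_0$. The paper simply states this identification in one line, whereas you verify both inclusions explicitly.
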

\begin{proof}
If  $\{x\in G:[u,x]=g\}$ is not empty then $[u,x_0]=g$ for some $x_0\in G$, in which case we have $\{x\in G:[u,x]=g\}=\{x\in G:u^x=u^{x_0}\}=C_G(u)x_0$.
\end{proof}

The point of the following lemma is that sequences of measures that measure index uniformly give the same measure to right-cosets of a subgroup that they give to left-cosets of that subgroup.
\begin{lemma}\label{lem:left/right}
Let $M=(\mu_n)_{n=1}^\infty$ be a sequence of measures that measures index uniformly. Then $\mu_n(Hx)\to1/[G:H]$ uniformly over all $x\in G$ and all subgroups $H$ of $G$.
\end{lemma}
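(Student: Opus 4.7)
The plan is to reduce the right-coset statement to the left-coset statement from the definition of uniform measurement of index, by rewriting any right coset as a left coset of a conjugate subgroup.

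First, I would observe that for any $x \in G$ and any subgroup $H \leq G$, one has
\[
Hx = x(x^{-1}Hx),
\]
so $Hx$ is a left coset of the subgroup $H^x \coloneqq x^{-1}Hx$. Since conjugation is an automorphism of $G$, the index satisfies $[G : H^x] = [G : H]$.

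Next, I would apply the hypothesis that $M$ measures index uniformly to the subgroup $H^x$ and the element $x$: this gives
\[
\mu_n(Hx) = \mu_n(x \cdot H^x) \xrightarrow{n \to \infty} \frac{1}{[G : H^x]} = \frac{1}{[G:H]}.
\]
The key point is uniformity. The collection $\{ (x, H^x) : x \in G,\ H \leq G \}$ of (element, subgroup) pairs is a subcollection of $\{ (x, K) : x \in G,\ K \leq G \}$, over which the convergence $\mu_n(xK) \to 1/[G:K]$ is already uniform by hypothesis. Hence the convergence $\mu_n(Hx) \to 1/[G:H]$ is uniform over all $x \in G$ and all $H \leq G$ as required.

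There is no real obstacle here; the argument is essentially a one-line reduction, and I expect the proof in the paper to be of this form.
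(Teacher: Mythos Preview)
Your proposal is correct and matches the paper's proof exactly: the paper simply writes ``This is because $Hx=x(H^x)$ and $H^x$ has the same index as $H$,'' which is precisely your reduction of the right coset to a left coset of a conjugate subgroup of the same index. Your added remark that uniformity is preserved because $\{(x,H^x)\}$ is a subcollection of $\{(x,K)\}$ makes explicit what the paper leaves implicit.
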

\begin{proof}
This is because $Hx=x(H^x)$ and $H^x$ has the same index as $H$.
\end{proof}

\begin{proof}[Proof of \cref{prop:non-id}]
\cref{lem:coset,lem:left/right} and the definition of uniform measurement of index imply that for every $x_1,\ldots,x_k\in G$ we have either
\begin{align*}
\mu_{n}(\{x\in G:[[x_1,\ldots,x_k],x]=g\})&\to\frac{1}{[G:C_G([x_1,\ldots,x_k])]}\\
    &=\lim_{n\to\infty}\mu_n(C_G([x_1,\ldots,x_k]))
\end{align*}
or $\mu_{n}(\{x\in G:[[x_1,\ldots,x_k],x]=g\})\to0$, and that this convergence is uniform over all $x_1,\ldots,x_k$. Writing $\E^{(n)}$ for expectation with respect to $\mu_n$, it follows that
\begin{align*}
\limsup_{n\to\infty}P_{\mu_n}^k(G,g)&=\limsup_{n\to\infty}\E^{(n)}_{(x_1,\ldots,x_k)\in G^k}(\mu_{n}(\{x\in G:[[x_1,\ldots,x_k],x]=g\}))\\
    &\le\limsup_{n\to\infty}\E^{(n)}_{(x_1,\ldots,x_k)\in G^k}(\mu_{n}(C_G([x_1,\ldots,x_k])))\\
    &=P_M^k(G,1),
\end{align*}
as required.
\end{proof}

We close this section by proving \cref{prop:converse}.

\begin{proof}[Proof of \cref{prop:converse}]
We use a similar argument to that of \cite[Proposition 1.15]{comm.prob}. Fix elements $x_1,\ldots,x_k\in\Gamma$. The fact that $\Gamma/H$ is $k$-step nilpotent implies that $[[x_1,\ldots,x_k],y] \in H$ for every $y\in\Gamma$. Since $|H| \leq d$, it follows from \cref{lem:coset} that
%It follows that $[x_1,\ldots,x_k,y]=[x_1,\ldots,x_k]^{-1}[x_1,\ldots,x_k]^y$ takes at most $d$ distinct values as $y$ ranges over $\Gamma$, and hence that the conjugacy class of $[x_1,\ldots,x_k]$ has size at most $d$. The orbit-stabiliser theorem therefore implies that 
$C_\Gamma([x_1,\ldots,x_k])$ has index at most $d$ in $\Gamma$, and hence at most $dm$ in $G$. Since this holds for every $x_1,\ldots,x_k\in\Gamma$, the result follows by uniform measurement of index.
\end{proof}

\section{Products of measures that measure index uniformly}\label{sec:prod.meas}
The aim of this short section is to prove the following result.
\begin{prop}\label{prop:fin.quot.=>seq}
Let $(\mu_n)_{n=1}^\infty$ be a sequence of measures that measure index uniformly on a group $G$, and suppose that $V\subset G^k$ is negligible by finite quotients of $G^k$. Then $\mu_n(V)\to0$ as $n\to\infty$.
\end{prop}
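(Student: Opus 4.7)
The plan is to reduce the problem to cosets of \emph{product-form} finite-index normal subgroups of $G^k$, and then apply uniform measurement of index on each factor.

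First I would address the mismatch that $V \subset G^k$ is negligible via arbitrary finite-index normal subgroups $N \lhd G^k$, while the measures $\mu_n$ live on $G$. Given any such $N$, define
\[
H_i = \{g \in G : (1,\ldots,1,g,1,\ldots,1) \in N\}
\]
(with $g$ in the $i$-th slot). Each $H_i$ is normal in $G$ and has finite index (since $N$ does), and since these $k$ copies commute pairwise inside $N$, the product $H = H_1 \times \cdots \times H_k$ is a finite-index normal subgroup of $G^k$ contained in $N$. I would then observe the elementary fact that negligibility passes to subgroups: if $|VN/N| \leq \eps|G^k/N|$, then because each coset of $N$ breaks into $[N:H]$ cosets of $H$, we have
\[
|VH/H| \leq |VN/N| \cdot [N:H] \leq \eps|G^k/H|.
\]
Thus, for any $\eps>0$, I can find a \emph{product-form} subgroup $H = H_1 \times \cdots \times H_k$ with $|VH/H| \leq \tfrac{\eps}{2}|G^k/H|$.

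Next I would compute the product measure of a single coset of $H$. Any coset of $H$ in $G^k$ has the form $(a_1 H_1) \times \cdots \times (a_k H_k)$, and so under the product measure $\mu_n^{\otimes k}$ (which is what is meant by $\mu_n$ on $G^k$ in the statement),
\[
\mu_n\bigl((a_1 H_1) \times \cdots \times (a_k H_k)\bigr) = \prod_{i=1}^{k} \mu_n(a_i H_i).
\]
Since $M$ measures index uniformly on $G$, each factor $\mu_n(a_i H_i)$ converges to $1/[G:H_i]$ uniformly in $a_i$, and so the product converges to $1/|G^k/H|$ uniformly in the choice of coset. In particular, once $n$ is large enough (depending on $H$, i.e., on $\eps$), every coset of $H$ has $\mu_n^{\otimes k}$-measure at most $2/|G^k/H|$.

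Finally I would combine the two estimates. Since $V$ is contained in at most $|VH/H|$ cosets of $H$,
\[
\mu_n(V) \leq |VH/H| \cdot \frac{2}{|G^k/H|} \leq \frac{\eps}{2}|G^k/H| \cdot \frac{2}{|G^k/H|} = \eps
\]
for all sufficiently large $n$. As $\eps$ was arbitrary, $\mu_n(V) \to 0$. The only mildly nontrivial step is the reduction to product-form subgroups in step~1, but this is handled cleanly by the commuting-coordinate-subgroups construction; everything else is bookkeeping against the definitions.
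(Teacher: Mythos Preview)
Your proof is correct and follows essentially the same approach as the paper's. The paper factors the argument through a separate lemma (\cref{lem:prod.meas}) showing that product measures converge correctly on cosets of arbitrary finite-index subgroups of $G^k$, proved by exactly the same reduction to product-form subgroups $\prod_i(H\cap G_i)$; you simply carry out this reduction inline and add the explicit observation that negligibility passes from $N$ down to $H$, which the paper handles implicitly by applying \cref{lem:prod.meas} directly to the given $H\lhd G^k$.
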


\begin{remark}\label{rem:fin.quot.=>seq}
To see that being negligible by finite quotients is \emph{strictly} stronger than having zero density with respect to a sequence of measures measuring index uniformly, consider the example in which $\mu_n$ is the uniform probability measure on the F\o lner set $\{-n,\ldots,n\}\subset\Z$, and the set $A$ is defined as
\[
A=\bigcup_{k=1}^\infty\{2^k+1,\ldots,2^k+k\}.
\]
Then $A$ satisfies $\mu_n(A)\to0$ as $n\to\infty$, but is not negligible by finite quotients of $\Z$.
\end{remark}

The proof of \cref{prop:fin.quot.=>seq} rests on the following lemma.
\begin{lemma}\label{lem:prod.meas}
Let $G_1,\ldots,G_k$ be groups, and for each $i$ let $(\mu_n^{(i)})_{n=1}^\infty$ be a sequence of measures such that for every $x_i\in G_i$ and every finite-index subgroup $H_i<G_i$ we have $\mu^{(i)}_n(x_iH_i)\to1/[G_i:H_i]$ as $n\to\infty$. Then
\[
\mu_n^{(1)}\times\cdots\times\mu_n^{(k)}(xH)\to\frac{1}{[G_1\times\cdots\times G_k:H]}
\]
for every finite-index subgroup $H<G_1\times\cdots\times G_k$ and every element $x\in G_1\times\cdots\times G_k$.
\end{lemma}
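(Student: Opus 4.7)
\medskip

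The plan is to reduce to the case where $H$ is itself a direct product of finite-index subgroups of the factors, by sandwiching $H$ between such a product and the whole group, and then to exploit the fact that the product measure of a box factorises.

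First, given a finite-index subgroup $H$ of $G := G_1 \times \cdots \times G_k$, define $H_i := \{g \in G_i : (1,\ldots,1,g,1,\ldots,1) \in H\}$, viewed as a subgroup of $G_i$. The map $G_i \to G/H$ sending $g$ to the coset of $(1,\ldots,g,\ldots,1)$ has kernel $H_i$ and codomain finite, so $[G_i : H_i] < \infty$. Setting $K := H_1 \times \cdots \times H_k$, we have $K \subseteq H$ and $K$ has finite index in $G$. Enumerate coset representatives so that $H = \bigsqcup_{j=1}^m y_j K$ with $m = [H:K]$; then $xH = \bigsqcup_{j=1}^m xy_j K$ is a disjoint union of $m$ cosets of $K$, each of which is a product coset of the form $a_{j,1} H_1 \times \cdots \times a_{j,k} H_k$ for some $(a_{j,1},\ldots,a_{j,k}) = xy_j$.

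Next, for each product coset $a_{j,1} H_1 \times \cdots \times a_{j,k} H_k$ the product measure factorises as
\[
(\mu_n^{(1)} \times \cdots \times \mu_n^{(k)})(a_{j,1} H_1 \times \cdots \times a_{j,k} H_k) = \prod_{i=1}^k \mu_n^{(i)}(a_{j,i} H_i),
\]
and by hypothesis each factor tends to $1/[G_i : H_i]$. Hence the whole product tends to $\prod_i 1/[G_i : H_i] = 1/[G:K]$.

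Finally, summing over the $m$ disjoint pieces of $xH$ gives
\[
(\mu_n^{(1)} \times \cdots \times \mu_n^{(k)})(xH) \to \frac{m}{[G:K]} = \frac{[H:K]}{[G:K]} = \frac{1}{[G:H]},
\]
as required. The only genuinely nontrivial point is the first step — observing that every finite-index subgroup of a direct product contains a finite-index product subgroup — after which the proof is just bookkeeping with product measures. I do not expect uniformity in $x$ or $H$ to be needed here (the statement only asks for pointwise convergence per coset), so no additional care is required beyond the basic estimate above.
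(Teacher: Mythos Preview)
Your proof is correct and follows essentially the same approach as the paper: both arguments reduce to the case of a product subgroup by observing that any finite-index $H<\prod_i G_i$ contains the finite-index product subgroup $\prod_i(H\cap G_i)$, then decompose $xH$ into cosets of that product and use the factorisation of the product measure on each piece. Your write-up is slightly more explicit about the coset decomposition and the final index calculation $[H:K]/[G:K]=1/[G:H]$, which the paper leaves implicit in the phrase ``it therefore suffices''.
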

\begin{proof}
Note first that
\[
\mu_n^{(1)}\times\cdots\times\mu_n^{(k)}\left(x\textstyle\prod_{i=1}^kH_i\right)\to\frac{1}{[\prod_{i=1}^kG_i:\prod_{i=1}^kH_i]}
\]
for all $x\in\prod_{i=1}^kG_i$ and all finite-index subgroups of the form $\prod_{i=1}^kH_i<\prod_{i=1}^kG_i$ with $H_i<G_i$ for each $i$. It therefore suffices to show that an arbitrary finite-index subgroup $H<\prod_{i=1}^kG_i$ has a finite-index subgroup of the form $\prod_{i=1}^kH_i$. However, if $H$ has index $d$ in $G$ then $H\cap G_i$ has index at most $d$ in $G_i$ for each $i$. The product subgroup $\prod_{i=1}^k(H\cap G_i)$ therefore has index at most $d^k$ in $\prod_{i=1}^kG_i$, and hence in $H$, as required.
\end{proof}
\begin{proof}[Proof of \cref{prop:fin.quot.=>seq}]
Let $\eps>0$, and let $H\lhd G^k$ be a finite-index normal subgroup such that $|VH/H|\le\frac{1}{2}\eps[G^k:H]$. As $G^k$ contains only finitely many cosets of $H$, it follows from \cref{lem:prod.meas} that there exists $N\in\N$ such that for every $x\in G^k$ and every $n\ge N$ we have $\mu_n(xH)\le2/[G^k:H]$, and hence $\mu_n(V)\le\mu_n(VH)\le\eps$. We therefore have $\mu_n(V)\to0$, as required.
\end{proof}

\section{Equations over virtually nilpotent groups in terms of polynomial mappings}\label{sec:v.nilp}
In this section and the next we prove \cref{thm:v.nilp.indep}. An important tool in the proof is the notion of a \emph{polynomial mapping} of a group. These have been studied extensively by Leibman \cite{leibman}, and have found applications to finding prime solutions to linear systems of equations \cite{gt} and to the study of harmonic functions on groups \cite{mpty}. They are defined as follows.
\begin{definition}[derivatives and polynomial mappings]
Let $G,H$ be groups and let $\varphi:G\to H$. Given $u\in G$, we define the \emph{$u$-derivative} $\der_u\varphi:G\to H$ of $\varphi$ via $\der_u\varphi(x)=\varphi(x)^{-1}\varphi(xu)$. Given $d\in\N$, we say that $\varphi$ is \emph{polynomial of degree $d$} if $\der_{u_1}\cdots\der_{u_{d+1}}\varphi\equiv1$ for all $u_1,\ldots,u_{d+1} \in G$.
\end{definition}
\begin{remark*}Leibman actually defines the more refined notion of being polynomial \emph{relative to a generating set $S$} for $G$; the above definition corresponds to being polynomial relative to $G$. Nonetheless, in the present paper the range of every mapping we consider will be nilpotent, and Leibman shows that a mapping of $G$ into a nilpotent group is polynomial relative to some generating set for $G$ if and only if it is polynomial relative to $G$ \cite[Proposition 3.5]{leibman}, so we lose no generality by using the definition above.
\end{remark*}

The basic scheme of the proof of \cref{thm:v.nilp.indep} is to show that equations over virtually nilpotent groups are polynomial mappings, so that the set of solutions to an arbitrary equation can be viewed as the set of roots of some polynomial. We do that in this section. The idea is then to use the familiar notion that the set of roots of a polynomial is `sparse' in some sense; we do this in the next section.

To state the main result of this section requires some notation.
Let $G$ be a group, let $H\lhd G$ be a normal subgroup, let $\varphi\in F_k\ast G$ be an equation over $G$ and let $g\in G^k$. Given $h\in H^k$, note that $\varphi(hg)\in H\varphi(g)$, so that we may define a mapping $\varphi_{H,g}:H^k\to H$ via
\[
\varphi_{H,g}(h)=\varphi(hg)\varphi(g)^{-1}.
\]
We may then describe the set of solutions to $\varphi=1$ in the coset $H^kg$ as
\begin{equation}\label{eq:solutions.coset}
G_\varphi\cap H^kg=\{h\in H^k:\varphi_{H,g}(h)=\varphi(g)^{-1}\}g.
\end{equation}
Our result is then as follows.
\begin{prop}\label{lem:phi_t.poly}
Let $G$ be a group, let $N\lhd G$ be a nilpotent normal subgroup, let $\varphi\in F_k\ast G$ be an equation over $G$, and let $g\in G^k$. Then the map $\varphi_{N,g}:N^k\to N$ is polynomial.
\end{prop}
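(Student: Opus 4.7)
My approach is to express $\varphi_{N,g}$ explicitly as a finite product of maps each of which is manifestly polynomial of degree $1$, and then invoke the fact that polynomial mappings into a nilpotent group are closed under the pointwise group operations.

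The first step is a direct algebraic manipulation. Write $\varphi$ as a word $\varphi = y_0\, x_{i_1}^{\epsilon_1}\, y_1 \cdots x_{i_n}^{\epsilon_n}\, y_n$ with $y_j \in G$, $i_j \in \{1,\ldots,k\}$ and $\epsilon_j \in \{\pm 1\}$. The elementary identity
\[
(h_{i_j} g_{i_j})^{\epsilon_j} = \bigl(h_{i_j}^{\epsilon_j}\bigr)^{\gamma_j}\, g_{i_j}^{\epsilon_j},
\]
with $\gamma_j = 1$ when $\epsilon_j = 1$ and $\gamma_j = g_{i_j}$ when $\epsilon_j = -1$, separates each inserted $h_{i_j}^{\epsilon_j}$ from its neighbouring constant in $\varphi(hg)$. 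Commuting each of these variable factors leftward past the accumulating block of constants (which keeps them inside $N$ because $N \lhd G$) then yields
\[
\varphi(hg) = \prod_{j=1}^n \bigl(h_{i_j}^{c_j}\bigr)^{\epsilon_j} \cdot \varphi(g)
\]
for explicit elements $c_j \in G$ depending only on $\varphi$ and $g$. Multiplying on the right by $\varphi(g)^{-1}$ then gives $\varphi_{N,g}(h) = \prod_{j=1}^n \bigl(h_{i_j}^{c_j}\bigr)^{\epsilon_j}$.

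To finish, for each $j$ consider the map $\rho_j \colon N^k \to N$ defined by $\rho_j(h) = h_{i_j}^{c_j}$. This is the composition of the coordinate projection $N^k \to N$ with the automorphism of $N$ given by conjugation by $c_j$ (well defined because $N$ is normal), and so is itself a homomorphism, hence polynomial of degree $1$. Leibman's theorem \cite{leibman} that the polynomial mappings from a group into a nilpotent group form a group under pointwise multiplication and inversion now guarantees that $\prod_{j=1}^n \rho_j^{\epsilon_j}$ is polynomial, establishing the proposition. The bulk of the content is therefore shouldered by Leibman's theorem; the step that requires most care is verifying the algebraic formula for $\varphi_{N,g}(h)$, in particular tracking the correct conjugating elements while sorting the word $\varphi(hg)$.
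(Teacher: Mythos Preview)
Your proof is correct and follows essentially the same approach as the paper: both arguments push the constants in $\varphi(hg)$ to the right (equivalently, the variable factors to the left) to exhibit $\varphi_{N,g}(h)$ as a product of conjugates $(h_{i_j}^{\pm1})^{c_j}$ with $c_j\in G$ independent of $h$, observe that each such factor defines a degree-$1$ polynomial map $N^k\to N$, and then invoke Leibman's theorem that polynomial mappings into a nilpotent group form a group under pointwise operations. Your write-up is slightly more explicit about the algebra (the identity for $(h_{i_j}g_{i_j})^{\epsilon_j}$ and the reason each $\rho_j$ is a homomorphism), but the strategy and the key input are identical.
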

In the case $N=G$, \cref{lem:phi_t.poly} is a consequence of the following result of Leibman.
\begin{theorem}[Leibman {\cite[Theorem 3.2]{leibman}}]\label{thm:leib.grp}
If $H$ is a group and $N$ is a nilpotent group then the polynomial mappings $H\to N$ form a group under the operations of taking pointwise products and pointwise inverses.
\end{theorem}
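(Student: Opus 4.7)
The plan is to prove \cref{lem:phi_t.poly} by induction on the length $m$ of $\varphi \in F_k \ast G$, written as a product of letters (i.e.\ elements of $\{x_i^{\pm 1}\}_{i=1}^k \cup G^{\pm 1}$), and to reduce each inductive step to Leibman's theorem \cref{thm:leib.grp} applied inside the nilpotent group $N$. This dispenses with the hypothesis $N = G$ by trading it for the observation that conjugation by an element of $G$ is an automorphism of $N$.

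For the base case $m = 0$, the word $\varphi = 1$ gives $\varphi_{N,g} \equiv 1$, polynomial of degree $0$. For the inductive step I would write $\varphi = \varphi' \cdot \ell$ with $\varphi'$ shorter and $\ell$ a single letter. If $\ell \in G^{\pm 1}$ is a constant letter, then $\varphi(hg) = \varphi'(hg)\ell$ and $\varphi(g) = \varphi'(g)\ell$, so $\varphi_{N,g}(h) = \varphi'_{N,g}(h)$ and the claim is immediate from the inductive hypothesis. When $\ell = x_i$, unpacking the definition and using the factorisation $\varphi'(hg) = \varphi'_{N,g}(h)\varphi'(g)$ yields
\[
\varphi_{N,g}(h) = \varphi'(hg)\, h_i\, \varphi'(g)^{-1} = \varphi'_{N,g}(h) \cdot \varphi'(g)\, h_i\, \varphi'(g)^{-1} = \varphi'_{N,g}(h) \cdot c_{\varphi'(g)}(h_i),
\]
where $c_a(n) \coloneqq a n a^{-1}$; the case $\ell = x_i^{-1}$ gives an analogous expression with $h_i^{-1}$ in place of $h_i$ and the conjugator $\varphi'(g) g_i^{-1}$ in place of $\varphi'(g)$. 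The crucial use of the hypothesis $N \lhd G$ is that it guarantees that $c_a$ restricts to an \emph{automorphism} of $N$ for every $a \in G$, so every factor on the right-hand side lies in $N$.

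To close the induction I would check that the second factor $h \mapsto c_a(h_i^{\pm 1})$ is polynomial as a map $N^k \to N$. The projection $\pi_i\colon N^k \to N$, $h \mapsto h_i$, is a group homomorphism and hence polynomial of degree at most $1$; its pointwise inverse $h \mapsto h_i^{-1}$ is polynomial by \cref{thm:leib.grp} applied to $\pi_i$ itself (this is the only route I know to handle inversion on a non-abelian nilpotent target without reproving Leibman's theorem from scratch). Post-composition with the automorphism $c_a$ then preserves polynomiality because derivatives commute with group homomorphisms: a one-line computation shows $\partial_u(c_a \circ \psi) = c_a \circ \partial_u \psi$, so iterated vanishing of derivatives of $\psi$ passes to $c_a \circ \psi$. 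Finally, \cref{thm:leib.grp} applied once more---now to pointwise products inside the nilpotent group $N$---shows that the product $\varphi'_{N,g} \cdot (c_a \circ \pi_i^{\pm 1})$ is polynomial.

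The main obstacle is purely book-keeping: handling $\ell = x_i^{-1}$ cleanly, where $(h_i g_i)^{-1} = g_i^{-1} h_i^{-1}$ forces $h_i^{-1}$ to be dragged past $g_i^{-1}$ before it can be absorbed into the prefix $\varphi'(g)$, producing the nested conjugator $\varphi'(g) g_i^{-1}$. Once this is set up, every other step is a direct invocation of either the closure of polynomial maps into nilpotent groups under pointwise products and inverses (\cref{thm:leib.grp}) or the elementary fact that homomorphisms preserve polynomial degree.
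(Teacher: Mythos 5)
This proposal is not a proof of the statement at all. The statement to be proved is \cref{thm:leib.grp}, namely Leibman's theorem that polynomial mappings $H\to N$ into a nilpotent group form a group under pointwise products and inverses. Your write-up instead proves \cref{lem:phi_t.poly}, and it does so by repeatedly \emph{invoking} \cref{thm:leib.grp} as a black box --- you say so explicitly when you write ``by \cref{thm:leib.grp} applied to $\pi_i$ itself (this is the only route I know to handle inversion on a non-abelian nilpotent target without reproving Leibman's theorem from scratch).'' An argument that uses a theorem as a hypothesis cannot serve as a proof of that theorem.

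For what it is worth, the paper does not prove \cref{thm:leib.grp} either: it is imported directly from Leibman's paper with a citation, and there is therefore no in-paper proof to compare against. Proving it would require getting into the internals of Leibman's theory --- e.g.\ the filtration of the polynomial mappings by degree, and an inductive argument along the lower central series of $N$ showing that $\partial_u(\varphi\psi)$ has lower degree than $\varphi\psi$ modulo lower-order correction terms, which is precisely what your proposal says it is avoiding. As a side remark, if your goal had been \cref{lem:phi_t.poly}, your induction on the length of $\varphi$ is a correct and slightly more formal version of the paper's ``move constants to the right one by one'' argument; the one substantive ingredient it shares with the paper is that $N\lhd G$ makes $n\mapsto n^x$ an automorphism of $N$ for $x\in G$, so conjugated projections $h\mapsto h_i^x$ are polynomial of degree $1$. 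But that is a different result from the one you were asked to prove.
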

Here, if $\varphi,\psi:H\to N$ are two polynomial mappings into a nilpotent group $N$ then the \emph{pointwise product} $\varphi\psi:H\to N$ is defined by setting $(\varphi\psi)(h)=\varphi(h)\psi(h)$, and the \emph{pointwise inverse} $\varphi^{(-1)}:H\to N$ is defined by setting $\varphi^{(-1)}(h)=\varphi(h)^{-1}$. Since constant maps $G^k \to G$ are trivially polynomial of degree $0$, and the maps $G^k \to G$ sending $(x_1,\ldots,x_k)$ to $x_i$ are trivially polynomial of degree $1$, it follows immediately from \cref{thm:leib.grp} that an equation over a nilpotent group $G$ is a polynomial $G^k\to G$. Our proof of \cref{lem:phi_t.poly} consists of reducing to the special case of \cref{thm:leib.grp}.
\begin{proof}[Proof of \cref{lem:phi_t.poly}]
We can view the equation $\varphi$ as a concatenation of \emph{variables} $x_i^{\pm 1} \in F_k$ and \emph{constants} $c \in G$. Write $g=(g_1,\ldots,g_k)$, and let $h = (h_1,\ldots,h_k)\in N^k$. Moving the elements $g_i^{\pm1} \in G$ and constants $c \in G$ one by one to the right of the word $\varphi(h_1g_1,\ldots,h_kg_k)$, conjugating the elements $h_i^{\pm1}$ as we go, we see that $\varphi_{N,g}(h)$ is a product of elements of the form $(h_i^{\pm1})^x$, with $x\in G$ depending only on $g_1,\ldots,g_k$, not on $h_1,\ldots,h_k$. Given any fixed $x\in G$ the maps $N^k\to N$ defined by $(h_1,\ldots,h_k)\mapsto h_i^x$ are polynomial of degree $1$, and so \cref{thm:leib.grp} implies that $\varphi_{N,g}$ is polynomial, as required.
\end{proof}

\section{Sparsity of roots of polynomial mappings}\label{sec:polys}
In this section we show that the set of roots of a polynomial mapping into a torsion-free nilpotent group is negligible by finite quotients, as follows.
\begin{definition}[closed subgroup]
A subgroup $\Gamma$ of a group $G$ is said to be \emph{closed} in $G$ if for every $x\in G$ and $n\in\Z$ we have $x^n\in\Gamma$ if and only if $x\in\Gamma$.
\end{definition}
\begin{theorem}\label{thm:leibman}
Let $G$ be a finitely generated group, let $N$ be a nilpotent group with a closed subgroup $\Gamma$, and let $\varphi:G\to N$ be polynomial. Then for every $x\in N$ such that $\varphi(G)\not\subset x\Gamma$ the set $\varphi^{-1}(x\Gamma)$ is negligible by finite quotients of $G$.
\end{theorem}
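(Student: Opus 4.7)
The plan is to reduce, via Mal'cev coordinates on $N$, to the following \emph{core claim}: for any nonzero polynomial map $\psi : G \to \Z$ on a finitely generated group $G$, the set $\psi^{-1}(0)$ is negligible by finite quotients of $G$. The core claim is then proved by factoring $\psi$ through a finitely generated torsion-free nilpotent quotient of $G$ and applying Schwartz--Zippel in congruence quotients.

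First I would reduce to the case that $N$ is finitely generated and torsion-free. The image $\varphi(G)$ lies in a finitely generated subgroup of $N$ (by induction on $\deg \varphi$, noting that iterated derivatives of top order are constants), so we may replace $N$ by that subgroup joined with $\{x\}$ and $\Gamma$ by its intersection with this subgroup, preserving closedness. Then, using that an integer-valued polynomial in one variable that is periodic is constant, one shows that $\varphi$ descends modulo torsion (after choosing $\Gamma$-cosets compatibly), reducing to the case that $N$ is finitely generated and torsion-free nilpotent. Choose a Mal'cev basis adapted to $\Gamma$, giving coordinate maps $t_1,\dots,t_r : N \to \Z$ (each Leibman-polynomial, as a direct computation with the group law verifies) such that $\Gamma = \{g : t_1(g) = \cdots = t_k(g) = 0\}$ and the coset $x\Gamma$ is cut out by equations $t_j(g) = F_j(t_{k+1}(g),\dots,t_r(g);x)$ for $j \le k$ and integer-valued polynomials $F_j$ determined by $x$ and the group law. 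By \cref{thm:leib.grp} each $t_j \circ \varphi$ is a polynomial map $G \to \Z$, so
\[
\psi_j \;:=\; (t_j \circ \varphi) - F_j(t_{k+1} \circ \varphi,\dots,t_r \circ \varphi;x)
\]
is polynomial, and $\varphi^{-1}(x\Gamma) = \bigcap_{j=1}^k \psi_j^{-1}(0)$. Since $\varphi(G) \not\subset x\Gamma$ some $\psi_j$ is not identically zero, and $\varphi^{-1}(x\Gamma) \subseteq \psi_j^{-1}(0)$, so the core claim suffices.

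To prove the core claim, let $d := \deg \psi$. Using the identity $T_{[h,k]} = T_h^{-1}T_k^{-1}T_hT_k$ on left-translation operators, together with induction, one shows that $\partial_c \psi$ has Leibman-degree at most $d-w$ whenever $c$ is an iterated commutator of weight $w$. Hence $\partial_c \psi \equiv 0$ for every $c \in \gamma_{d+1}(G)$, so $\psi(gc) = \psi(g)$ and $\psi$ descends to a polynomial map on the finitely generated nilpotent group $G/\gamma_{d+1}(G)$. Factoring out its finite torsion subgroup yields a nonzero polynomial map $\tilde\psi : N_0 \to \Z$ with $N_0$ finitely generated torsion-free nilpotent. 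In Mal'cev coordinates $(s_1,\dots,s_m)$ on $N_0$, Leibman's theory identifies $\tilde\psi$ with an ordinary polynomial $P \in \Q[y_1,\dots,y_m]$, nonzero and integer-valued on $\Z^m$. For each prime $p$ let $K_p \lhd N_0$ be the level-$p$ principal congruence subgroup: then $[N_0 : K_p] = p^m$ and $N_0/K_p$ is (as a set) $\F_p^m$ with group law given by the mod-$p$ reduction of Mal'cev multiplication. For all sufficiently large primes $p$, the reduction $P \bmod p$ is nonzero, and Schwartz--Zippel gives at most $(\deg P)\, p^{m-1}$ zeros in $\F_p^m$. Taking $H_p \lhd G$ to be the preimage of $K_p$ under $G \twoheadrightarrow N_0 \twoheadrightarrow N_0/K_p$, we get $[G:H_p] = p^m$ and $|\psi^{-1}(0) H_p / H_p|/[G:H_p] \le (\deg P)/p \to 0$ as $p \to \infty$, proving negligibility.

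The main obstacle is the technical identification, used in both the reduction and the core claim, of Leibman-polynomial maps with ordinary polynomials in Mal'cev coordinates; this rests on the Baker--Campbell--Hausdorff description of multiplication in a finitely generated torsion-free nilpotent group and is standard but delicate. Secondarily, the reductions to finitely generated and torsion-free $N$ require some care in verifying that closedness of $\Gamma$ is preserved throughout. The commutator-weight/Leibman-degree estimate in the core-claim argument is the main algebraic input, and is precisely what forces one to work with the entire lower central series rather than only the abelianisation.
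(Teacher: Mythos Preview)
Your approach is essentially the same as the paper's: reduce to a nonzero polynomial map $\psi\colon G\to\Z$, factor $\psi$ through a finitely generated torsion-free nilpotent quotient of $G$, pass to Mal'cev coordinates, and count zeros modulo primes. Your appeal to Schwartz--Zippel compresses what the paper carries out by explicit induction on the number of coordinates, and your commutator-weight argument for factoring $\psi$ through $G/\gamma_{d+1}(G)$ is precisely the content the paper imports by citing \cite{mpty}.

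The one place the paper is cleaner is the reduction to a $\Z$-valued map. Instead of your adapted Mal'cev basis and the functions $\psi_j$ (where the description of $x\Gamma$ via the $F_j$, and the phrase ``choosing $\Gamma$-cosets compatibly'', would need more justification), the paper first replaces $\varphi$ by $x^{-1}\varphi$ --- still polynomial, by Leibman's theorem that polynomial maps into a nilpotent group form a group --- reducing to $x=1$. It then uses only the Bergelson--Leibman subnormal series $\Gamma=\Gamma_0\lhd\cdots\lhd\Gamma_r=N$ with $\Z$ quotients: taking $k$ minimal with $\varphi(G)\subset\Gamma_k$ and composing with $\Gamma_k\to\Gamma_k/\Gamma_{k-1}\cong\Z$ gives a single nonzero $\psi$ with $\varphi^{-1}(\Gamma)\subset\psi^{-1}(0)$. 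This sidesteps both the reduction of $N$ to torsion-free and any question of whether a Mal'cev basis adapted to an arbitrary closed $\Gamma$ exists.
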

This is in the same spirit as the following theorem of Leibman.
\begin{theorem}[Leibman {\cite[Proposition 4.3]{leibman}}]\label{thm:leibman.orig}
Let $G$ be a countable amenable group, and let $(\mu_n)_{n=1}^\infty$ be the sequence of uniform probability measures on some F\o lner sequence on $G$. Let $N$ be a nilpotent group, let $\Gamma$ be a closed subgroup of $N$, and let $\varphi:G\to N$ be polynomial. Then for every $x\in N$ such that $\varphi(G)\not\subset x\Gamma$ we have $\mu_n(\varphi^{-1}(x\Gamma))\to0$ as $n\to\infty$.
\end{theorem}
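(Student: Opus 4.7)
The plan is to parallel the structure of Leibman's proof of \cref{thm:leibman.orig}, substituting finite-quotient negligibility of $G$ for vanishing density along a F{\o}lner sequence. The central input will be that a polynomial mapping from a finitely generated group into a finitely generated nilpotent group becomes, in Mal'cev coordinates, a genuine polynomial function in the algebraic-geometry sense, so that a Schwartz--Zippel-style estimate controls the density of its zero set modulo prime powers.

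I would begin with a chain of reductions. Left-translating $\varphi$ by $x^{-1}$ gives a polynomial mapping of the same degree whose preimage of $\Gamma$ equals $\varphi^{-1}(x\Gamma)$, so we may assume $x=1$. By Leibman's result that the image of a polynomial mapping from a finitely generated group into a nilpotent group is finitely generated, we may replace $N$ by $\langle\varphi(G)\rangle$ and $\Gamma$ by its (still closed) intersection with this subgroup, so that $N$ is finitely generated nilpotent and $\Gamma$ is proper; a further partition of $G$ according to the cosets of a torsion-free finite-index subgroup that $\varphi$ hits lets us assume $N$ torsion-free. Inducting on the nilpotency class of $N$, a Frattini-style lemma for nilpotent groups forces $\Gamma[N,N]\neq N$, and composing $\varphi$ with $N\to N/[N,N]$ reduces to a smaller class (with the image of $\Gamma$ still closed and proper), and eventually to $N$ finitely generated abelian. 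Since $\Gamma$ closed in abelian $N$ forces $N/\Gamma$ torsion-free, quotienting reduces to $\Gamma=\{1\}$ and $N=\Z^m$, and projecting onto a coordinate on which $\varphi(g)\neq 0$ for some $g$ reduces to the key case: show the zero set of a non-constant polynomial mapping $\psi\colon G\to\Z$ is negligible by finite quotients of $G$.

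For this key case, $\psi$ factors through a finitely generated nilpotent quotient $\bar G$ of $G$, namely $G/\gamma_{d+1}(G)$ where $d=\deg\psi$ (a consequence of Leibman's structural theory for polynomial mappings into abelian groups). After passing to a torsion-free finite-index subgroup of $\bar G$ (valid since finitely generated nilpotent groups are virtually torsion-free) and choosing a Mal'cev basis to identify the underlying set of that subgroup with $\Z^h$, the descended $\bar\psi$ becomes a non-zero integer-valued polynomial function $P(y_1,\ldots,y_h)$ of degree at most $d$. For each prime $p$ large enough that $P\not\equiv 0\pmod p$, and each $n\geq 1$, the reduction $P\bmod p^n$ is a non-zero polynomial on $(\Z/p^n)^h$ whose zero set has density $O_d(1/p)$ by a Schwartz--Zippel-type estimate. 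Since the Mal'cev-coordinate projection $\bar G\to(\Z/p^n)^h$ is a group homomorphism (equipping the target with the group law from reducing the Mal'cev multiplication polynomials mod $p^n$), its kernel is a finite-index normal subgroup; pulling back to $G$ exhibits a finite-index normal subgroup $K\lhd G$ with $|\psi^{-1}(0)K/K|/|G/K|=O_d(1/p)$, which tends to $0$ as $p$ runs through primes tending to infinity.

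I expect the main obstacle to be the reduction to the abelian case, and in particular verifying that $\Gamma[N,N]/[N,N]$ remains closed in $N/[N,N]$: this requires careful use of the isolator characterization of closed subgroups in finitely generated nilpotent groups, and is cleanest after first reducing to torsion-free $N$. A secondary technical issue is making the Schwartz--Zippel bound work over the ring $\Z/p^n$ rather than over a field, with a constant depending only on $d$ and uniform in $n$; this is standard but requires a Hensel-style lifting argument.
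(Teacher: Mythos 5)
The statement you were asked to prove is \cref{thm:leibman.orig}, which is a cited result of Leibman; the paper does not prove it, but instead proves its own strengthening \cref{thm:leibman} (negligibility by finite quotients for finitely generated $G$). Your proposal is really a proof attempt for \cref{thm:leibman}, not for \cref{thm:leibman.orig}: every one of your reductions (replacing $N$ by $\langle\varphi(G)\rangle$, the Mal'cev-coordinate argument, the Schwartz--Zippel estimate in finite quotients) requires $G$ to be finitely generated, whereas \cref{thm:leibman.orig} allows an arbitrary countable amenable $G$, and the remark after \cref{thm:leibman.orig} in the paper explicitly notes that \cref{thm:leibman} only recovers \cref{thm:leibman.orig} in the finitely generated case. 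So at minimum you would need an additional reduction to the finitely generated setting to claim the stated result.

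Setting aside that mismatch and comparing against the paper's proof of \cref{thm:leibman}: the key case (the zero set of a non-constant polynomial $G\to\Z$ is negligible by finite quotients) matches the paper's strategy in \cref{prop:leibman.Z,prop:negl.precise}, though the paper only reduces modulo primes $p$ (via the quotients $G/G^{(p)}$ of \cref{lem:finite}), so your worry about Schwartz--Zippel over $\Z/p^n\Z$ and Hensel lifting is unnecessary. The genuine gap is in your reduction from a nilpotent $N$ with closed subgroup $\Gamma$ to the abelian case. You claim that passing to $N/[N,N]$ keeps $\Gamma$ closed, but this is false even when $N$ is finitely generated torsion-free nilpotent: take $N=H_3(\Z)$ with generators $a,b$ and $c=[a,b]$, and $\Gamma=\langle a^2c\rangle$. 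A short computation shows $\Gamma$ is closed (if $(x,y,z)^n\in\Gamma$ then $y=0$ and $x=2z$), yet $\Gamma[N,N]/[N,N]$ is $2\Z\times\{0\}$ inside $N/[N,N]\cong\Z^2$, which is not closed. The paper avoids this entirely by invoking the Bergelson--Leibman characterisation of closed subgroups (\cref{prop:closed}), which provides a filtration $\Gamma=\Gamma_0\lhd\Gamma_1\lhd\cdots\lhd\Gamma_r=N$ with $\Gamma_i/\Gamma_{i-1}\cong\Z$; one then picks the minimal $k$ with $\varphi(G)\subset\Gamma_k$ and composes with $\Gamma_k\to\Gamma_k/\Gamma_{k-1}\cong\Z$ (\cref{prop:reduc.to.Z}), giving a one-step reduction that bypasses both the abelianisation issue and the separate torsion-free reduction you sketch. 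You correctly flagged the abelianisation step as the main obstacle, but it is not merely delicate---it actually fails, and the fix requires the different tool used in the paper.
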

\begin{remark*}By \cite[Theorem 1.12]{comm.prob}, \cref{prop:fin.quot.=>seq,rem:fin.quot.=>seq}, \cref{thm:leibman} is strictly stronger than \cref{thm:leibman.orig} in the case where $G$ is finitely generated. It also applies to non-amenable $G$, unlike \cref{thm:leibman.orig}. On the other hand, \cref{thm:leibman.orig} does not require $G$ to be finitely generated, and it follows from \cite[Proposition 3.21]{leibman} that every polynomial mapping of $G$ into a nilpotent group factors through some amenable quotient of $G$, so \cref{thm:leibman.orig} does have some implicit content even when $G$ is not amenable.
\end{remark*}

We divide the proof of \cref{thm:leibman} into two parts. The first part reduces to the case where $N=\Z$, as follows.
\begin{prop}\label{prop:reduc.to.Z}
Let $G$ be a finitely generated group, let $N$ be a nilpotent group with a closed subgroup $\Gamma$, let $\varphi:G\to N$ be polynomial, and let $x\in N$ be such that $\varphi(G)\not\subset x\Gamma$. Then there is a non-constant polynomial mapping $\psi:G\to\Z$ such that $\varphi^{-1}(x\Gamma)\subset\psi^{-1}(0)$.
\end{prop}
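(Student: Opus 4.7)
The strategy is to construct a nonzero group homomorphism $h: N \to \Z$ that vanishes on $\Gamma$; then $\psi \coloneqq h \circ \varphi$ will work, being polynomial of degree at most $\deg\varphi$ (since $\partial_u(h\circ\varphi) = h \circ \partial_u\varphi$ when $h$ is a homomorphism into an abelian group, so $(d+1)$-fold derivatives vanish), satisfying $\varphi^{-1}(x\Gamma) \subseteq \psi^{-1}(0)$, and non-constant provided $h$ is nonzero somewhere on $\varphi(G)$. I first make two reductions. Using \cref{thm:leib.grp}, the pointwise product of $\varphi$ with the constant map $x^{-1}$ is polynomial, so replacing $\varphi$ by this product I may assume $x = 1$ and $\varphi(G) \not\subseteq \Gamma$. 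Next, since $G$ is generated by a finite set $\{s_1,\ldots,s_m\}$ and $\varphi$ has finite degree $d$, iterating $\varphi(gu) = \varphi(g)(\partial_u\varphi)(g)$ shows that $\varphi(G)$ is contained in the finitely generated nilpotent subgroup $N_0 \coloneqq \langle \varphi(G)\rangle \leq N$. Replacing $N$ by $N_0$ and $\Gamma$ by $\Gamma \cap N_0$ (still closed and proper), I may assume $N$ is finitely generated nilpotent with $\langle \varphi(G)\rangle = N$.

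A third reduction brings $N$ to the torsion-free case: every element of the finite torsion subgroup $T(N)$ has a power equal to $1 \in \Gamma$, so closedness forces $T(N) \subseteq \Gamma$; hence $\Gamma/T(N)$ is closed and proper in the finitely generated torsion-free nilpotent quotient $N/T(N)$, and composing with the projection $\pi: N \to N/T(N)$ reduces the construction of $h$ to this quotient. The core claim is now: \emph{for $\Gamma$ a closed proper subgroup of a finitely generated torsion-free nilpotent $N$, there is a nonzero homomorphism $h:N\to\Z$ with $\Gamma \subseteq \ker h$.} Pass to the Mal'cev $\Q$-completion $N_\Q$, with $\Q$-Lie algebra $\mathfrak{n}$; closed subgroups of $N$ correspond to $\Q$-Lie subalgebras of $\mathfrak{n}$, so $\Gamma$ yields a proper subalgebra $\mathfrak{g} \subsetneq \mathfrak{n}$. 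I claim $\mathfrak{g} + [\mathfrak{n},\mathfrak{n}] \subsetneq \mathfrak{n}$, by induction on the nilpotency class $c$ of $\mathfrak{n}$. The base case $c=1$ is trivial. For the inductive step, assume $\mathfrak{g} + [\mathfrak{n},\mathfrak{n}] = \mathfrak{n}$; applying the contrapositive of the inductive hypothesis to $\bar{\mathfrak{g}} = (\mathfrak{g}+\mathfrak{n}^c)/\mathfrak{n}^c$ inside the class-$(c-1)$ quotient $\mathfrak{n}/\mathfrak{n}^c$ (where $\mathfrak{n}^c$ is the last nonzero term of the lower central series) gives $\mathfrak{g}+\mathfrak{n}^c=\mathfrak{n}$. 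Then for $x\in\mathfrak{n}$, $y\in\mathfrak{n}^{c-1}$, the decompositions $x = \gamma_x + \delta_x$, $y = \gamma_y + \delta_y$ with $\gamma_x,\gamma_y\in\mathfrak{g}$, $\delta_x,\delta_y\in\mathfrak{n}^c$ (and $\gamma_y\in\mathfrak{n}^{c-1}$ since $y-\delta_y\in\mathfrak{n}^{c-1}$) yield $[x,y]=[\gamma_x,\gamma_y]\in\mathfrak{g}$, because all other Leibniz terms lie in $\mathfrak{n}^{c+1}=0$. Hence $\mathfrak{n}^c\subseteq\mathfrak{g}$, giving $\mathfrak{g}=\mathfrak{n}$ — a contradiction. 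Thus the image of $\mathfrak{g}$ in $\mathfrak{n}/[\mathfrak{n},\mathfrak{n}]$ is a proper $\Q$-subspace; any nonzero $\Q$-linear functional vanishing on it pulls back to a Lie algebra homomorphism $\mathfrak{n}\to\Q$, which exponentiates to a group homomorphism $N_\Q \to \Q$ vanishing on $\Gamma$, and restricts to a nonzero $h_0 : N\to\Q$. Since $h_0(N)$ is a finitely generated subgroup of $\Q$, hence isomorphic to $\Z$, composing with such an isomorphism produces the desired $h$.

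Non-constancy of $\psi = h\circ\varphi$ now comes for free from the property $\langle \varphi(G)\rangle = N$ preserved by the reductions (images of a generating set under $\pi$ still generate): if $\psi \equiv 0$ then $h$ vanishes on $\varphi(G)$, hence on $\langle \varphi(G)\rangle = N$, contradicting $h\neq 0$. The main obstacle is the Lie-algebraic core claim — the statement that a proper Lie subalgebra of a nilpotent Lie algebra cannot surject onto the abelianization — and the passage through the Mal'cev correspondence needed to convert the group-theoretic closedness hypothesis on $\Gamma$ into the tractable $\Q$-subalgebra hypothesis on $\mathfrak{g}$.
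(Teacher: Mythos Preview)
Your proof is correct and takes a genuinely different route from the paper. The paper does not pass to the Mal'cev completion or use Lie algebras; instead, after the same reduction to $x=1$, it invokes the Bergelson--Leibman characterisation of closed subgroups (\cref{prop:closed}): there is a chain $\Gamma=\Gamma_0\lhd\Gamma_1\lhd\cdots\lhd\Gamma_r=N$ with each $\Gamma_i/\Gamma_{i-1}\cong\Z$, and one simply takes $\psi$ to be $\varphi$ followed by the projection $\Gamma_k\to\Gamma_k/\Gamma_{k-1}\cong\Z$, where $k$ is minimal with $\varphi(G)\subset\Gamma_k$. This is considerably shorter and avoids the Mal'cev machinery entirely. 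On the other hand, \cref{prop:closed} is stated only for finitely generated $N$, which the paper never explicitly arranges; your reduction to $N_0=\langle\varphi(G)\rangle$ is actually more careful on this point. Your approach also has the merit of proving from first principles the group-theoretic content that \cref{prop:closed} encodes --- that a proper closed subgroup lies in the kernel of a nonzero homomorphism to $\Z$ --- and your inductive Lie-algebra argument that a proper subalgebra of a nilpotent $\Q$-Lie algebra has proper image in the abelianisation is clean and correct.

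One small slip: your final paragraph only shows $\psi\not\equiv0$, not that $\psi$ is non-constant. These differ: take $G=\Z$, $N=\Z^2$, $\Gamma=0$, $\varphi(n)=(1,n)$ (so indeed $\langle\varphi(G)\rangle=N$), and $h(a,b)=a$; then $\psi\equiv1$. This is harmless for the downstream application, since if $\psi\equiv c\ne0$ then $\varphi^{-1}(\Gamma)\subset\psi^{-1}(0)=\emptyset$ anyway --- and indeed the paper's own proof has precisely the same issue for an unlucky choice of the series $(\Gamma_i)$ --- but your claim that non-constancy ``comes for free'' from $\langle\varphi(G)\rangle=N$ is not quite right as written.
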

The second part proves the theorem in this case, as follows.
\begin{prop}\label{prop:leibman.Z}
Let $G$ be a finitely generated group and let $\varphi:G\to\Z$ be a non-constant polynomial mapping. Then $\varphi^{-1}(0)$ is negligible by finite quotients.
\end{prop}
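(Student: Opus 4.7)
The plan is to reduce the problem, via classical structural results on polynomial mappings, to a statement about ordinary integer-valued polynomials in several commuting variables, and then finish by a Schwartz--Zippel argument modulo a large prime. First, I would observe that any polynomial mapping $\varphi\colon G\to\Z$ of degree $d$ annihilates $I(\Z G)^{d+1}$ (where $I$ denotes the augmentation ideal) when extended $\Z$-linearly, and hence, by the Magnus--Sandling theorem relating $G\cap(1+I^{d+1})$ to $\gamma_{d+1}(G)$, is invariant under $\gamma_{d+1}(G)$. Thus $\varphi$ factors through the finitely generated nilpotent quotient $\overline{G}=G/\gamma_{d+1}(G)$, and since finite-index normal subgroups of $\overline{G}$ lift to finite-index normal subgroups of $G$ preserving the relative density of $\varphi^{-1}(0)$, we may assume $G$ is finitely generated nilpotent. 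Passing to a torsion-free normal subgroup of finite index (Mal'cev) and handling the cosets of this subgroup separately, we may further assume $G$ is torsion-free, at the cost of noting that a non-constant polynomial $G \to \Z$ cannot factor through a finite quotient of $G$ (a derivative-summation argument over a finite-order element), so that $\varphi$ remains non-constant on at least one coset.

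Next, choose a Mal'cev basis for $G$, identifying $G$ with $\Z^r$ as a set (where $r$ is the Hirsch length of $G$) in such a way that group multiplication is expressed by polynomial maps $\Z^r\times\Z^r\to\Z^r$ of bounded degree. Under this identification, a standard argument (iterated derivatives in the group sense translate to classical finite-difference operators after polynomial substitution) shows that $\varphi$ becomes a classical polynomial $P\in\Q[t_1,\ldots,t_r]$ taking integer values on $\Z^r$, of total degree $D$ bounded in terms of $\deg\varphi$ and the nilpotency class of $G$. Non-constancy of $\varphi$ forces $P$ to be non-zero.

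Finally, for all but finitely many primes $p$, the reduction $\overline{P}\in\F_p[t_1,\ldots,t_r]$ is a non-zero polynomial of degree at most $D$, and by the Schwartz--Zippel lemma its zero set in $\F_p^r$ has density at most $D/p$. The preimage of the congruence subgroup $p\Z^r$ under the Mal'cev identification is a finite-index subgroup of $G$; its normal core in $G$ is a finite-index normal subgroup $H\lhd G$, and the image of $\varphi^{-1}(0)$ in $G/H$ injects into the set of cosets meeting the zero set of $\overline{P}$, giving density $O(D/p)$, which tends to $0$ as $p\to\infty$ and witnesses negligibility by finite quotients.

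The main obstacle is the first reduction: the correspondence between polynomial mappings $G\to\Z$ and the augmentation ideal is classical but must be quoted precisely, and under Mal'cev coordinates one needs to verify carefully that $\varphi$ becomes a classical polynomial of degree bounded in terms of $\deg\varphi$ and the class of $G$. The Schwartz--Zippel step itself, together with the construction of the normal core $H\lhd G$, is entirely routine once these reductions are carried out.
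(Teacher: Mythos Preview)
Your approach is essentially the paper's: reduce to a torsion-free nilpotent group, pass to Mal'cev coordinates where $\varphi$ becomes a classical integer-valued polynomial, and bound its zeros modulo large primes. The paper packages the first reduction as \cref{lem:tf.quot} (quoted from \cite{mpty}) and carries out your Schwartz--Zippel step as the inductive counting argument of \cref{prop:negl.precise}, so the two arguments essentially coincide once the reduction is made.

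There is, however, a gap in your torsion-free reduction. You pass to a torsion-free normal subgroup $N$ of finite index and say it suffices that $\varphi$ is non-constant on \emph{at least one} coset of $N$. That is not enough: if $\varphi$ happened to be identically zero on some single coset $gN$, then $gN\subset\varphi^{-1}(0)$ would already have density $1/[G:N]$, and $\varphi^{-1}(0)$ would not be negligible. What you actually need is that $\varphi$ does not vanish identically on \emph{any} coset of $N$, and this is not what your ``cannot factor through a finite quotient'' observation delivers. The clean fix is the one your own derivative-summation argument really supports: showing that $\varphi$ is invariant under right-translation by every torsion element means $\varphi$ factors through the torsion-free \emph{quotient} $G/T$ (where $T$ is the finite torsion subgroup of the nilpotent group), not merely that it restricts nicely to a torsion-free subgroup. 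This is exactly the route taken in \cref{lem:tf.quot}, and once you switch from subgroup to quotient the coset issue disappears entirely.

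A minor further point: the set $\alpha^{-1}(p\Z^r)$ is not obviously a subgroup of $G$ for arbitrary primes $p$, since the Mal'cev multiplication polynomials may have denominators. The paper checks in \cref{lem:finite} that for primes $p$ away from a finite bad set this set coincides with the verbal subgroup $G^{(p)}$ generated by $p$-th powers, which is automatically normal---so your normal-core step is in fact unnecessary there.
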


In proving \cref{prop:reduc.to.Z} we use the following characterisation of closed subgroups of nilpotent groups.

\begin{prop}[Bergelson--Leibman {\cite[Proposition 1.19]{bl}}]\label{prop:closed}
Let $N$ be a finitely generated nilpotent group. Then a subgroup $\Gamma<N$ is closed in $N$ if and only if there exists a series $\Gamma=\Gamma_0\lhd\Gamma_1\lhd\ldots\lhd\Gamma_r=N$ with $\Gamma_i/\Gamma_{i-1}\cong\Z$ for every $i$.
\end{prop}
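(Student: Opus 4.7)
My plan is to prove the two implications separately.

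For $(\Leftarrow)$, I would induct downwards along the given series, using the transitivity of closedness along chains (if $\Gamma \leq H \leq N$ with $\Gamma$ closed in $H$ and $H$ closed in $N$, then $\Gamma$ is closed in $N$). The key step is that whenever $\Gamma_{i-1} \lhd \Gamma_i$ with quotient isomorphic to $\Z$, any $x \in \Gamma_i$ with $x^n \in \Gamma_{i-1}$ (for $n \neq 0$) projects to an $n$-torsion element of $\Gamma_i/\Gamma_{i-1} \cong \Z$, which must be trivial, giving $x \in \Gamma_{i-1}$. Starting from $\Gamma_{r-1}$ closed in $\Gamma_r = N$ and iterating shows each $\Gamma_i$ is closed in $N$, in particular $\Gamma = \Gamma_0$.

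For $(\Rightarrow)$, the first reduction is to the torsion-free case: the torsion subgroup $T \lhd N$ is finite, and each $t \in T$ satisfies $t^{|T|} = 1 \in \Gamma$, forcing $T \subset \Gamma$ by closedness. Then a series for $(\bar N, \bar\Gamma) = (N/T, \Gamma/T)$ pulls back to one for $(N, \Gamma)$ with the same $\Z$ factors, since all preimages contain $T \subset \Gamma$.

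Assuming $N$ is torsion-free, I would then induct on $h(N) - h(\Gamma)$, where $h$ is the Hirsch length. In the base case $h(\Gamma) = h(N)$, the index $[N:\Gamma]$ is finite, so the normal core of $\Gamma$ in $N$ has some finite index $d$; every $x \in N$ then satisfies $x^d \in \mathrm{Core}(\Gamma) \subset \Gamma$, so $x \in \Gamma$ by closedness, giving $\Gamma = N$. For the inductive step, I construct a closed $\Gamma_1 \leq N$ with $\Gamma \lhd \Gamma_1$ and $\Gamma_1/\Gamma \cong \Z$, and then apply the inductive hypothesis to $(\Gamma_1, N)$. To build $\Gamma_1$, I work with the normalizer $M := N_N(\Gamma)$: the normalizer condition in nilpotent groups gives $M \supsetneq \Gamma$, and closedness of $\Gamma$ in $M$ implies $M/\Gamma$ is torsion-free (any nontrivial finite-order element would lift to a witness violating closedness). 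So $M/\Gamma$ is a nontrivial finitely generated torsion-free nilpotent group, with center containing a copy of $\Z$; taking $\Gamma_1$ to be the preimage in $M$ of the isolator in $Z(M/\Gamma)$ of a primitive central element yields $\Gamma \lhd \Gamma_1$ with $\Gamma_1/\Gamma \cong \Z$ and $\Gamma_1$ closed in $M$.

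The main obstacle is promoting closedness of $\Gamma_1$ from $M$ to all of $N$, which reduces to showing that $N_N(\Gamma)$ is itself closed in $N$ whenever $\Gamma$ is. My plan is to pass to the Mal'cev completion $N_{\R}$ of $N$ (a simply-connected nilpotent Lie group containing $N$ as a lattice), using the dictionary that closed subgroups of $N$ correspond bijectively to Zariski-closed connected subgroups of $N_{\R}$ via intersection with $N$. The Zariski closure $\bar\Gamma$ of $\Gamma$ in $N_{\R}$ satisfies $\bar\Gamma \cap N = \Gamma$; its normalizer $N_{N_{\R}}(\bar\Gamma)$ is again a Zariski-closed subgroup of $N_{\R}$ (as normalizers of closed subgroups are closed), and a direct verification shows $N_{N_{\R}}(\bar\Gamma) \cap N = N_N(\Gamma)$. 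This exhibits $N_N(\Gamma)$ as the intersection with $N$ of a Zariski-closed subgroup of $N_{\R}$, hence as a closed subgroup of $N$ in the group-theoretic sense, completing the induction.
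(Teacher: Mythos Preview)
The paper does not give its own proof of this statement; it is quoted from Bergelson--Leibman \cite[Proposition 1.19]{bl} and used as a black box. So there is nothing to compare against directly.

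Your proposed proof is correct. Two small points worth tightening. First, the ``bijection'' you invoke between closed subgroups of $N$ and Zariski-closed connected subgroups of $N_\R$ is not literally a bijection (an irrational line in $\R^2$ meets $\Z^2$ only at the origin), but all your argument actually needs is that $\bar\Gamma \cap N = \Gamma$ whenever $\Gamma$ is closed in $N$, and that $H \cap N$ is closed in $N$ whenever $H \le N_\R$ is Zariski-closed; both of these hold, the latter because in a simply connected nilpotent Lie group the exponential map is a diffeomorphism and Zariski-closed subgroups correspond to Lie subalgebras. Second, the claim that $\Gamma_1$ is closed in $M$ implicitly relies on the center of the torsion-free nilpotent group $M/\Gamma$ being isolated in $M/\Gamma$; this follows from uniqueness of roots (if $x^n$ is central then $(x^y)^n = (x^n)^y = x^n$ forces $x^y = x$), but is worth stating explicitly.
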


We also use the following trivial lemma.

\begin{lemma}[Leibman {\cite[Proposition 1.10]{leibman}}]\label{lem:comp.w/hom}
Let $G$, $H$ and $H'$ be groups, let $\varphi:G\to H$ be polynomial of degree $d$, and let $\pi:H\to H'$ be a homomorphism. Then the composition $\pi\circ\varphi:G\to H'$ is also polynomial of degree $d$.
\end{lemma}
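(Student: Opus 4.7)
The plan is to verify this by a direct, one‑line computation showing that taking a $u$‑derivative commutes with post‑composition by a homomorphism, and then iterating $d+1$ times. Concretely, for any $u,x \in G$ I would expand
\[
\der_u(\pi\circ\varphi)(x) = (\pi\circ\varphi)(x)^{-1}\,(\pi\circ\varphi)(xu) = \pi(\varphi(x))^{-1}\pi(\varphi(xu)) = \pi\bigl(\varphi(x)^{-1}\varphi(xu)\bigr) = \pi\bigl(\der_u\varphi(x)\bigr),
\]
where the key middle step uses that $\pi$ is a homomorphism (so it preserves inverses and products). Thus $\der_u(\pi\circ\varphi) = \pi\circ\der_u\varphi$ as maps $G\to H'$.

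Iterating this identity $d+1$ times gives
\[
\der_{u_1}\cdots\der_{u_{d+1}}(\pi\circ\varphi) \;=\; \pi\circ\bigl(\der_{u_1}\cdots\der_{u_{d+1}}\varphi\bigr)
\]
for all $u_1,\ldots,u_{d+1}\in G$. Since $\varphi$ is polynomial of degree $d$, the inner iterated derivative is identically $1_H$, and hence the composed iterated derivative is identically $\pi(1_H) = 1_{H'}$. Therefore $\pi\circ\varphi$ is polynomial of degree $d$, completing the proof.

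No serious obstacle is anticipated here: the argument is purely formal and rests only on the definition of the $u$‑derivative and the fact that $\pi$ preserves the group operations. The only point to be a little careful about is to note that the identity $\der_u(\pi\circ\varphi) = \pi\circ\der_u\varphi$ holds as equality of set‑theoretic maps (not merely up to the kernel of $\pi$), so the induction on the number of derivatives goes through verbatim.
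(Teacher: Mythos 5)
Your proof is correct. The paper does not reprove this lemma (it simply cites Leibman's Proposition~1.10), but your argument is exactly the standard one: the identity $\der_u(\pi\circ\varphi) = \pi\circ\der_u\varphi$ follows immediately from $\pi$ being a homomorphism, and iterating it $d+1$ times reduces the claim to $\pi(1_H)=1_{H'}$. The one point you flag — that the commutation holds as an equality of set-theoretic maps, so the iteration is literal rather than up to kernel — is the right thing to be careful about, and you handle it correctly.
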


\begin{proof}[Proof of \cref{prop:reduc.to.Z}]
Since identity map is polynomial of degree $1$, it follows from \cref{thm:leib.grp} that $x^{-1}\varphi$ is polynomial. Since $(x^{-1}\varphi)(g)\in\Gamma$ precisely when $\varphi(g)\in x\Gamma$, upon replacing $\varphi$ by $x^{-1}\varphi$ we may therefore assume that $x=1$.

Let $\Gamma=\Gamma_0\lhd\Gamma_1\lhd\ldots\lhd\Gamma_r=N$ be the series given by \cref{prop:closed}, so that $\Gamma_i/\Gamma_{i-1}\cong\Z$ for every $i$, and let $k$ be minimal such that $\varphi(G)\subset\Gamma_k$, noting that $k\ge1$ by assumption. Write $\pi:\Gamma_k\to\Gamma_k/\Gamma_{k-1}\cong\Z$ for the quotient homomorphism, and define $\psi=\pi\circ\varphi:G\to\Gamma_k/\Gamma_{k-1}\cong\Z$, noting that $\psi$ is polynomial by \cref{lem:comp.w/hom}. We then have $\psi^{-1}(0)=\varphi^{-1}(\Gamma_{k-1})\supset\varphi^{-1}(\Gamma)$, as required.
\end{proof}

The first step in our proof of \cref{prop:leibman.Z} is to reduce to the case where $G$ is torsion-free nilpotent, via the following result of Meyerovitch, Perl, Yadin and the second author \cite{mpty}.
\begin{lemma}\label{lem:tf.quot}
Let $G$ be a group, and let $\varphi:G\to\Z$ be a polynomial mapping of degree $d$. Then there is a torsion-free $d$-step nilpotent quotient $G'$ of $G$ and a polynomial mapping $\hat\varphi:G'\to\Z$ of degree $d$ such that, writing $\pi:G\to G'$ for the quotient homomorphism, we have $\varphi=\hat\varphi\circ\pi$.
\end{lemma}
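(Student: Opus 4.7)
My plan is to identify the quotient $G'$ explicitly. I would set $\Gamma = \gamma_{d+1}(G)$, the $(d+1)$-th term of the lower central series of $G$, and let $T$ denote the torsion subgroup of the nilpotent group $G/\Gamma$ --- this is genuinely a subgroup, since torsion elements of a nilpotent group form a subgroup. Let $K \lhd G$ be the preimage of $T$ in $G$, so that $G' := G/K \cong (G/\Gamma)/T$ is torsion-free and $d$-step nilpotent. It will then suffice to show that $\varphi$ is constant on cosets of $K$ and that the induced map $\hat\varphi$ has degree $d$.

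First I would show that $\varphi$ is constant on cosets of $\Gamma$. The starting point is the identity $uv = vu[u,v]$, which, because the target $\Z$ is abelian, immediately yields
\[
\partial_{[u,v]}\varphi(xvu) = \varphi(xuv) - \varphi(xvu) = \partial_u \partial_v \varphi(x) - \partial_v \partial_u \varphi(x)
\]
for all $x, u, v \in G$. Iterating this for a simple commutator $k = [u_1, u_2, \ldots, u_{d+1}]$ of weight $d+1$, I would express $\partial_k \varphi$ at a suitably translated base point as a signed sum of $(d+1)$-fold iterated derivatives $\partial_{u_{\sigma(1)}} \cdots \partial_{u_{\sigma(d+1)}} \varphi$, each of which vanishes by the degree hypothesis. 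Combined with the cocycle identity $\partial_{kk'}\varphi(x) = \partial_{k'}\varphi(xk) + \partial_k\varphi(x)$, which makes $\{k \in G : \partial_k \varphi \equiv 0\}$ a subgroup of $G$, this would give the desired inclusion $\Gamma \subseteq \{k : \partial_k \varphi \equiv 0\}$.

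To upgrade from $\Gamma$ to $K$, I would fix $g \in K$ and $x \in G$ and choose $n \geq 1$ with $g^n \in \Gamma$. A direct computation shows that $\alpha(m) := xg^m$ is a polynomial mapping $\Z \to G$ of degree $1$; iterating the identity $\partial_{n_i}(\varphi \circ \alpha)(m) = \partial_{g^{n_i}}\varphi(\alpha(m))$ gives that $\psi(m) := \varphi(xg^m)$ is a polynomial $\Z \to \Z$ of degree $\le d$ in the classical sense. Since $\psi(mn) = \psi(0)$ for all $m \in \Z$ by the first step, the polynomial $\psi - \psi(0)$ has infinitely many zeros, hence vanishes identically; in particular $\varphi(xg) = \psi(1) = \psi(0) = \varphi(x)$. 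With $\varphi$ thus constant on cosets of $K$, I would define $\hat\varphi: G' \to \Z$ by $\hat\varphi \circ \pi = \varphi$; pushing iterated derivatives through $\pi$ shows that $\hat\varphi$ has degree $\le d$, and it cannot have degree $< d$, since by \cref{lem:comp.w/hom} this would force $\varphi$ to have degree $< d$ also.

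The main obstacle I anticipate is the combinatorial induction in the first step: the single-commutator identity is a one-line calculation, but iterating it to handle weight-$(d+1)$ commutators while carefully tracking the shifted base points requires some bookkeeping.
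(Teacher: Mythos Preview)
Your argument is correct and essentially reconstructs what the paper defers to: the paper's proof is a bare citation of \cite[Lemmas 2.5 \& 4.4]{mpty}, and your quotient $G/K$ is precisely the group $G/\overline{\gamma_{d+1}(G)}$ that the paper itself introduces (following \cite{mpty}) in the appendix. So the route is the same; you have just made it self-contained.

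One technical correction worth flagging for when you write it up carefully. In the inductive step you claim that $\partial_{[u_1,\ldots,u_{d+1}]}\varphi$ unfolds as a signed sum of terms $\partial_{u_{\sigma(1)}}\cdots\partial_{u_{\sigma(d+1)}}\varphi$ with $\sigma$ a genuine permutation of the $u_i$. This is not quite what the iteration produces: when you apply $\partial_{u_{m+1}}$ to a function already expressed via the weight-$m$ identity (which involves a right translation by $(u_2u_1)^{-1}$ or similar), the rule $\partial_u(\chi\circ R_g)=(\partial_{g^{-1}ug}\chi)\circ R_g$ means that \emph{conjugates} of the $u_i$ appear, not the $u_i$ themselves. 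This is harmless for your purposes, since each resulting term is still a $(d+1)$-fold iterated derivative of $\varphi$ and therefore vanishes, but the permutation description should be relaxed accordingly. The cleaner way to organise the induction is to prove directly, by induction on the weight $m$, that $\partial_c\psi$ has degree at most $\deg\psi-m$ for every weight-$m$ simple commutator $c$ and every polynomial $\psi:G\to\Z$; the base case is your displayed identity, and the inductive step uses it once together with the fact that right translation preserves degree.
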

\begin{proof}
This is immediate from \cite[Lemmas 2.5 \& 4.4]{mpty}.
\end{proof}
In fact, although \cref{lem:tf.quot} is sufficient for our purposes in the present paper, in \cref{sec:polymapnilp} we take the opportunity to deduce from it a similar result for polynomial mappings into arbitrary torsion-free nilpotent groups.

An important benefit of \cref{lem:tf.quot} is that it allows us in the proof of \cref{prop:leibman.Z} to exploit the existence of certain coordinate systems on torsion-free nilpotent groups. We give a basic description of coordinate systems here; see \cite[3.8--3.19]{leibman} for a more detailed description of coordinate systems and their relationship to polynomial mappings, and \cite[\S4.2]{mpty} for details on a particularly natural coordinate system to use when studying polynomial mappings to nilpotent groups.

Given a finitely generated torsion-free nilpotent group $G$, there exists a central series $\{1\}=G_0\lhd G_1\lhd\ldots\lhd G_m = G$ such that $G_i/G_{i-1}\cong\Z$ for every $i$. Picking $e_i\in G_i$ for each $i$ in such a way that $G_{i-1}e_i$ is a generator for $G_i/G_{i-1}$, every element $g\in G$ then has a unique expression
\begin{equation}\label{eq:coords}
g=e_1^{v_1}\cdots e_m^{v_m}
\end{equation}
for some $v_1,\ldots,v_m\in\Z$; we call $(e_1,\ldots,e_m)$ a \emph{basis} for $G$. We call the $v_i$ in the expression \eqref{eq:coords} the \emph{coordinates} of $g$ with respect to $(e_1,\ldots,e_m)$, and call the map $G\to\Z^m$ taking an element of $G$ to its coordinates the \emph{coordinate mapping} of $G$ with respect to $(e_1,\ldots,e_m)$. We often abbreviate the expression $e_1^{v_1}\cdots e_m^{v_m}$ as $\bfe^\bfv$.

\begin{prop}[Leibman {\cite[Proposition 3.12]{leibman}}]\label{prop:coord.poly}
Let $G$ and $N$ be finitely generated torsion-free nilpotent groups with bases $(e_1,\ldots,e_m)$ and $(f_1,\ldots,f_n)$, respectively, and let $\alpha:G\to\Z^m$ and $\beta:N\to\Z^n$ be the corresponding coordinate mappings. Then a mapping $\varphi:G\to N$ is polynomial if and only if $\beta\circ\varphi\circ\alpha^{-1}:\Z^m\to\Z^n$ is polynomial.
\end{prop}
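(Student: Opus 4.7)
The plan is to show that each of the coordinate maps $\alpha$, $\alpha^{-1}$, $\beta$, $\beta^{-1}$ is itself polynomial, so that both directions of the equivalence reduce to the statement that a composition of polynomial mappings into a nilpotent target is polynomial.

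First I would verify that $\alpha^{-1}:\Z^m\to G$ is polynomial. The projection $\mathbf{v}\mapsto v_i$ is a homomorphism $\Z^m\to\Z$, and composing with the homomorphism $v\mapsto e_i^v$ gives a homomorphism $\mathbf{v}\mapsto e_i^{v_i}$, which is polynomial of degree $1$ by \cref{lem:comp.w/hom}. Since $G$ is nilpotent, the pointwise product $\alpha^{-1}(\mathbf{v})=e_1^{v_1}\cdots e_m^{v_m}$ is polynomial by \cref{thm:leib.grp}; the same argument gives that $\beta^{-1}$ is polynomial.

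Next I would show $\alpha:G\to\Z^m$ is polynomial by induction on $m$. The base case $m=1$ is trivial since $\alpha$ is then a group isomorphism. For the inductive step, the top coordinate $\alpha_m:G\to\Z$ is the quotient homomorphism $G\to G/G_{m-1}\cong\Z$, and so is polynomial of degree $1$. For $i<m$, the element $\tilde g := g e_m^{-\alpha_m(g)}$ lies in $G_{m-1}$, and its $i$-th coordinate in the basis $(e_1,\ldots,e_{m-1})$ equals $\alpha_i(g)$. The map $g\mapsto\tilde g$ is the pointwise product in $G$ of the identity and of $g\mapsto e_m^{-\alpha_m(g)}$ (a composition of two homomorphisms), both polynomial, so \cref{thm:leib.grp} shows that $g\mapsto\tilde g$ is polynomial; composing with the polynomial coordinate map $\alpha_i^{G_{m-1}}:G_{m-1}\to\Z$ supplied by the inductive hypothesis yields $\alpha_i$. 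Since a mapping into the abelian group $\Z^m$ is polynomial iff each of its coordinates is (derivatives commute with projection to each factor), this shows that $\alpha$ itself is polynomial; the analogous argument handles $\beta$. With these facts in place, both directions of the proposition follow by composition: if $\varphi$ is polynomial then so is $\beta\circ\varphi\circ\alpha^{-1}$; and if $\beta\circ\varphi\circ\alpha^{-1}$ is polynomial then so is $\varphi=\beta^{-1}\circ(\beta\circ\varphi\circ\alpha^{-1})\circ\alpha$.

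The main obstacle in this plan is exactly the general composition principle for polynomial mappings with a nilpotent target, which is used both in the inductive step for $\alpha$ and in the final two-line deduction. The excerpt supplies only composition with a homomorphism (\cref{lem:comp.w/hom}) and pointwise products (\cref{thm:leib.grp}); the unrestricted composition theorem requires a separate argument of Leibman by double induction on the degrees of the two factors, using a chain-rule-type identity for the derivative operators $\partial_u$. In a full write-up I would invoke this directly from \cite{leibman}.
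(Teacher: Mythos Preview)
The paper does not prove this proposition; it is quoted verbatim as \cite[Proposition 3.12]{leibman} and used as a black box, so there is no in-paper argument to compare against. Your reconstruction is therefore being judged on its own merits and against Leibman's original.

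Your strategy---show that $\alpha$, $\alpha^{-1}$, $\beta$, $\beta^{-1}$ are each polynomial and then invoke closure under composition---is exactly Leibman's own approach, and the individual steps are sound. The argument for $\alpha^{-1}$ via \cref{thm:leib.grp} is clean. The induction for $\alpha$ is also correct; one small point you gloss over is that the map $g\mapsto\tilde g$, while a priori a polynomial map $G\to G$, must be viewed as a map $G\to G_{m-1}$ before composing with the inductive hypothesis. This is harmless: since $\tilde g\in G_{m-1}$ always, every iterated derivative $\partial_{u_1}\cdots\partial_{u_j}(g\mapsto\tilde g)$ also takes values in $G_{m-1}$, so the polynomial degree is the same whether the target is $G$ or $G_{m-1}$.

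You are right that the one genuine external input is the composition theorem for polynomial mappings into nilpotent targets, which the paper does not supply (it gives only \cref{lem:comp.w/hom} and \cref{thm:leib.grp}). This is \cite[Proposition 3.7]{leibman}, and your description of its proof---a double induction on degrees using a Leibniz-type identity for $\partial_u(\psi\circ\varphi)$---is accurate. Citing it directly is the correct move; attempting to rederive it here would be out of proportion to the rest of the section.
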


Polynomial mappings $\Z^m\to\Z^n$ are just standard polynomials in $m$ variables, although we caution, as Leibman does in \cite[1.8]{leibman}, that these polynomials can have non-integer rational coefficients: the polynomial $\frac{1}{2}n^2+\frac{1}{2}n$ maps $\Z\to\Z$, for example.

Leibman \cite[Corollary 3.7]{leibman} shows that in a nilpotent group $G$ the operations of multiplication $G\times G\to G$ defined by $(g_1,g_2)\mapsto g_1g_2$, and raising to a power $G\times\Z\to G$ defined by $(g,n)\mapsto g^n$, are polynomial mappings. Given a finitely generated torsion-free nilpotent group $G$ with basis $(e_1,\ldots,e_m)$, it therefore follows from \cref{prop:coord.poly} that there exist polynomials $\mu_1,\ldots,\mu_m:\Z^{2m}\to\Z$ and $\epsilon_1,\ldots,\epsilon_m:\Z^{m+1} \to \Z$ such that
\begin{equation}\label{eq:coord.mult}
\bfe^\bfv \cdot \bfe^\bfw = e_1^{\mu_1(\bfv,\bfw)} \cdots e_m^{\mu_m(\bfv,\bfw)}
\end{equation}
and
\begin{equation}\label{eq:coord.power}
(\bfe^\bfv)^n = e_1^{\epsilon_1(\bfv,n)} \cdots e_m^{\epsilon_m(\bfv,n)}
\end{equation}
for every $\bfv,\bfw\in\Z^m$ and every $n\in\Z$. Leibman notes this in \cite[Corollary 3.13]{leibman}. It recovers a result of Hall \cite[Theorem 6.5]{hall}.

In light of \cref{prop:coord.poly}, if $G$ is torsion-free nilpotent then \cref{prop:leibman.Z} follows from the following result.

\begin{prop} \label{prop:negl}
Let $G$ be a torsion-free nilpotent group with basis $(e_1,\ldots,e_m)$, and let $\alpha:G\to\Z^m$ be the corresponding coordinate mapping. Let $p: \Z^m \to \Z$ be a non-zero polynomial, and let
\[
\mathcal{N}_p := \{ g\in G : p\circ\alpha(g) = 0 \}.
\]
Then $\mathcal{N}_p$ is negligible in $G$ by finite quotients.
\end{prop}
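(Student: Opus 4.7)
The plan is to clear denominators so that $p$ has integer coefficients, exhibit a natural family of ``congruence subgroups'' $K_n \lhd G$ of finite index coming from the Mal'cev coordinates, and then bound $|\mathcal{N}_p K_q / K_q|$ for a suitable prime $q$ using the Schwartz--Zippel lemma applied inside the finite group $G/K_q$.

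First I will reduce to the case in which $p$ has integer coefficients: multiplying through by a common denominator $M$ does not alter $\mathcal{N}_p$, so we may as well assume $p \in \Z[v_1, \ldots, v_m]$; write $D$ for its total degree. To produce the congruence subgroups I will appeal to the classical structure theory of finitely generated torsion-free nilpotent groups (Hall--Mal'cev): for a suitable choice of Mal'cev basis $(e_1, \ldots, e_m)$---any one refining the lower central series of $G$ will do---the multiplication polynomials $\mu_i$ of \eqref{eq:coord.mult} have integer coefficients, and can be written as $\mu_i(\bfv, \bfw) = v_i + w_i + r_i(\bfv, \bfw)$ where every monomial of $r_i$ involves at least one $v_j$ and at least one $w_j$. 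Granted this, for each $n \in \N$ the reduction $\bfe^\bfv \mapsto \bfv \bmod n\Z^m$ becomes a well-defined surjective group homomorphism from $G$ onto the finite group of order $n^m$ with underlying set $(\Z/n\Z)^m$ and group law induced by $\mu \bmod n$; its kernel is the finite-index normal subgroup $K_n := \alpha^{-1}(n\Z^m)$, and $\alpha$ descends to a set bijection $\bar\alpha \colon G/K_n \to (\Z/n\Z)^m$.

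Given $\eps > 0$, I will then pick a prime $q$ with $q > D/\eps$ and larger than every coefficient of $p$ in absolute value, so that the reduction $\bar p$ of $p$ modulo $q$ is a non-zero polynomial of total degree at most $D$ in $\mathbb{F}_q[v_1, \ldots, v_m]$. Integrality of the coefficients of $p$ means that $p(\bfv) \bmod q$ depends only on $\bfv \bmod q$, so
\[
\bar\alpha(\mathcal{N}_p K_q / K_q) \subseteq \{\bar\bfv \in \mathbb{F}_q^m : \bar p(\bar\bfv) = 0\},
\]
and the Schwartz--Zippel lemma bounds the right-hand side by $Dq^{m-1}$. Since $|G/K_q| = q^m$, this will give $|\mathcal{N}_p K_q/K_q|/|G/K_q| \leq D/q < \eps$, which is the required negligibility.

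The main obstacle is the integer-coefficient claim on the Mal'cev multiplication polynomials: it is classical but not proven in the excerpt, and requires either a construction of the basis via basic commutators in the free nilpotent group or a reduction using a faithful representation of $G$ in $U_k(\Z)$. If one wishes to avoid invoking this, an alternative is to use the principal congruence subgroups coming directly from such a matrix embedding, expressing $p \circ \alpha$ in terms of the matrix entries by \cref{prop:coord.poly} before applying Schwartz--Zippel to the resulting polynomial.
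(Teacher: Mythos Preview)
Your overall plan coincides with the paper's: produce ``congruence'' quotients in which the Mal'cev coordinates reduce mod $q$, then bound the image of $\mathcal{N}_p$ by a Schwartz--Zippel count (the paper carries out the Schwartz--Zippel induction by hand over cosets of $G_1$, but that is the same argument). The part that fails is your justification that $K_n=\alpha^{-1}(n\Z^m)$ is a subgroup: the claim that a Mal'cev basis refining the lower central series yields \emph{integer}-coefficient multiplication polynomials is not true.

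For a concrete counterexample, take the torsion-free $3$-step nilpotent group on generators $x,y$ with $z:=[y,x]$, $u:=[z,x]$, $u$ central and $[z,y]=1$, and the basis $(u,z,x,y)$; this basis refines the lower central series. A short collection gives
\[
y^{d}x^{c'}=x^{c'}y^{d}z^{c'd}u^{\binom{c'}{2}d},
\]
so the $u$-coordinate $\mu_1$ of a product $(u^a z^b x^c y^d)(u^{a'}z^{b'}x^{c'}y^{d'})$ contains the term $\binom{c'}{2}d=\tfrac12(c'^2-c')d$; the coefficients are genuinely non-integral, and your reduction map $\bfe^{\bfv}\mapsto \bfv\bmod n$ is not a homomorphism for all $n$. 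What \emph{is} true---and what the paper proves as \cref{lem:finite} and \eqref{eq:G^(n)}---is that the $\mu_i,\epsilon_i$ have bounded denominators $n_0$, and that for any prime $q\nmid n_0$ one has $G^{(q)}=\{\bfe^{q\bfv}:\bfv\in\Z^m\}=\alpha^{-1}(q\Z^m)$, so your $K_q$ equals the verbal subgroup $G^{(q)}$ and is automatically normal of index $q^m$. Equivalently, since the $\mu_i$ are integer-\emph{valued}, they are $\Z$-combinations of products of binomial coefficients, and Lucas' theorem shows that for primes $q$ exceeding $\deg\mu_i$ the value of $\mu_i$ mod $q$ depends only on the inputs mod $q$. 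With that correction in place, your Schwartz--Zippel step goes through exactly as written.

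Your fallback via an embedding $G\hookrightarrow U_k(\Z)$ is a workable idea, but as stated it is incomplete: you would need to explain why $p\circ\alpha$ extends to a polynomial in the ambient matrix entries (the map from matrix entries back to Mal'cev coordinates of $G$ is only defined on the image of $G$), and why the density bound in $U_k(\F_q)$ transfers to the subgroup $G/(G\cap\Gamma(q))$. The cleanest route is the one above: keep the given basis, let $q$ avoid the denominators, and identify $K_q$ with $G^{(q)}$.
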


The first step in the proof of \cref{prop:negl} is to construct the quotients that we will use to show that $\mathcal{N}_p$ is negligible by finite quotients. Given a group $G$ we write $G^{(n)}$ is the subgroup generated by all $n^{\text{th}}$ powers of elements of $G$, and $G(n)$ for the quotient $G/G^{(n)}$. If $G$ is finitely generated and torsion-free nilpotent with basis $(e_1,\ldots,e_m)$ then we write $G_i(n)$ for the image of $G_i$ under the quotient map $G \to G(n)$. The precise statement that we prove in order to deduce \cref{prop:negl} is then as follows.
\begin{prop}\label{prop:negl.precise}
Let $G$ be a torsion-free nilpotent group with basis $(e_1,\ldots,e_m)$, and let $\alpha:G\to\Z^m$ be the corresponding coordinate mapping. Let $p: \Z^m \to \Z$ be a non-zero polynomial, and let
\[
\mathcal{N}_p := \{ g\in G : p\circ\alpha(g) = 0 \}.
\]
Then 
\begin{equation}\label{eq:Np.negl}
\frac{|\mathcal{N}_pG^{(n)}/G^{(n)}|}{|G/G^{(n)}|}\to0
\end{equation}
as $n\to\infty$ through the primes.
\end{prop}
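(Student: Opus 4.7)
The plan is to identify $G/G^{(n)}$ with $(\Z/n\Z)^m$ for all sufficiently large primes $n$ via the Mal'cev coordinates, and to apply the Schwartz--Zippel lemma to the reduction of $p$ modulo $n$. Throughout, let $H_n := \alpha^{-1}(n\Z^m) = \{\bfe^{n\bfv} : \bfv \in \Z^m\}$.

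The first step is to establish that $G^{(n)} = H_n$ for all sufficiently large primes $n$. The inclusion $H_n \subseteq G^{(n)}$ is immediate: $\bfe^{n\bfv} = e_1^{n v_1} \cdots e_m^{n v_m}$ is a product of $n$-th powers. For the reverse inclusion I would use the power formula \eqref{eq:coord.power}: since $\epsilon_i(\bfv, 0) = 0$, expanding the numerical polynomial $\epsilon_i$ in the integer-coefficient basis $\binom{v_1}{k_1} \cdots \binom{v_m}{k_m} \binom{n}{k_0}$ shows that its terms with $k_0 = 0$ all vanish, so for $n$ prime larger than the degree of $\epsilon_i$ in $n$ the standard identity $n \mid \binom{n}{k_0}$ for $1 \le k_0 < n$ forces $\epsilon_i(\bfv, n) \in n\Z$. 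Hence every $n$-th power lies in $H_n$, and so does the subgroup they generate, giving $G^{(n)} \subseteq H_n$.

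The second step is to show that the set map $G \to (\Z/n\Z)^m$, $\bfe^\bfv \mapsto \bfv \bmod n$, descends to a bijection $G/G^{(n)} \to (\Z/n\Z)^m$. A similar binomial-basis analysis of the multiplication formula \eqref{eq:coord.mult} shows that, for $n$ prime larger than the degree of $\mu_i$ in $\bfw$, one has $\mu_i(\bfu, \bfv') \equiv \mu_i(\bfu, \bfv) \pmod n$ whenever $\bfv \equiv \bfv' \pmod n$ and $\bfu \in \Z^m$; applied with $\bfu = \epsilon(\bfv, -1)$, and using that $\mu(\epsilon(\bfv,-1),\bfv) = 0$, this gives $\bfe^{-\bfv}\bfe^{\bfv'} \in H_n = G^{(n)}$, so the set map descends. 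Injectivity follows by induction on the length of the central series: since $G_1$ is central in $G$ we have $\mu_1(\bfv, \bfw) = v_1 + w_1$ exactly, so the first coordinate modulo $n$ of $\bfe^\bfv$ determines $v_1$ modulo $n$, and the remaining coordinates are handled by passing to $G/G_1$ and iterating. In particular $[G : G^{(n)}] = n^m$.

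Finally, after clearing denominators one may assume $p \in \Z[x_1, \ldots, x_m]$, and for all but finitely many primes $n$ the reduction $\bar p \in \F_n[x_1, \ldots, x_m]$ is non-zero of total degree $d = \deg p$. Since $\bfe^\bfv \in \mathcal{N}_p$ forces $p(\bfv) = 0$ and hence $\bar p(\bfv \bmod n) = 0$, the bijection of the second step embeds $\mathcal{N}_p G^{(n)}/G^{(n)}$ into $\{\bfu \in \F_n^m : \bar p(\bfu) = 0\}$, which has at most $d n^{m-1}$ elements by the Schwartz--Zippel lemma. The ratio in \eqref{eq:Np.negl} is therefore at most $d/n$, which tends to $0$. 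The main obstacle will be the numerical-polynomial analysis in the first two steps: the polynomials $\mu_i$ and $\epsilon_i$ typically have rational (not integer) coefficients in the naive monomial expansion, so divisibility cannot be read off term by term; the resolution is to pass to the binomial-coefficient basis, in which numerical polynomials have integer coefficients and the divisibility $n \mid \binom{n}{k}$ for primes $n > k$ makes all the required congruences visible.
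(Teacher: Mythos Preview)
Your overall strategy is sound and the final step via Schwartz--Zippel is a nice streamlining of the paper's argument (which instead inducts on $m$, peeling off one coordinate at a time via the quotient $G/G_1$---essentially reproving Schwartz--Zippel by hand). The binomial-basis analysis of $\epsilon_i$ and $\mu_i$ is also correct and is an alternative to the paper's device of clearing denominators once and for all by choosing $n_0$ with all coefficients in $\frac{1}{n_0}\Z$.

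There is, however, a genuine slip in your second step. The claim that ``since $G_1$ is central we have $\mu_1(\bfv,\bfw)=v_1+w_1$ exactly'' is false: in the Heisenberg group with $e_1$ central and $[e_3,e_2]=e_1^{\pm1}$ one computes $\mu_1(\bfv,\bfw)=v_1+w_1\pm v_3w_2$. (What \emph{is} exactly additive is the top coordinate $\mu_m$, since $g\mapsto v_m$ is the homomorphism $G\to G/G_{m-1}\cong\Z$.) You have also swapped the labels ``descends'' and ``injectivity'': the implication you actually derive from the congruence $\mu_i(\bfu,\bfv')\equiv\mu_i(\bfu,\bfv)\pmod n$ is $\bfv\equiv\bfv'\Rightarrow\bfe^{\bfv}G^{(n)}=\bfe^{\bfv'}G^{(n)}$, which is injectivity of the putative map $G/G^{(n)}\to(\Z/n\Z)^m$, not its well-definedness.

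Fortunately the false claim is not needed. Your congruence for $\mu_i$, together with $G^{(n)}=H_n$, already yields the \emph{other} direction as well: if $\bfe^{\bfv'}=\bfe^{\bfv}\cdot\bfe^{n\bfw}$ for some $\bfw$, then $v_i'=\mu_i(\bfv,n\bfw)\equiv\mu_i(\bfv,0)=v_i\pmod n$. So the bijection $G/G^{(n)}\leftrightarrow(\Z/n\Z)^m$ follows directly from the single congruence principle ``numerical polynomials of degree $<n$ respect reduction mod $n$'' applied to the $\mu_i$, with no separate inductive argument required. (One small point you should make explicit: to conclude $G^{(n)}\subseteq H_n$ from ``every $n$-th power lies in $H_n$'' you need $H_n$ to be a subgroup; this follows from the same congruence principle applied to $\mu_i$ and to $\epsilon_i(\cdot,-1)$, but it is worth saying.)
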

\begin{remark}
An inspection of the argument shows that there exists an integer $n_0=n_0(G,e_1,\ldots,e_m)$ such that \eqref{eq:Np.negl} holds as $n\to\infty$ through those positive integers coprime to $n_0$.
\end{remark}

\begin{lemma}\label{lem:finite}
Let $G$ be a finitely generated torsion-free nilpotent group with basis $(e_1,\ldots,e_m)$. Then there exists an integer $n_0 = n_0(G,e_1,\ldots,e_m)$ such that for every positive integer $n$ coprime to $n_0$ and every $i=1,\ldots,m$ we have $G_i(n)/G_{i-1}(n)\cong C_n$.
\end{lemma}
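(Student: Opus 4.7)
The plan has two parts. First, I would reduce the lemma to showing that $|G(n)| = n^m$ for $n$ coprime to some suitable integer $n_0 = n_0(G, e_1, \ldots, e_m)$. Each quotient $G_i(n)/G_{i-1}(n)$ is cyclic, generated by the image of $e_i$ (since $G_i = G_{i-1} \langle e_i \rangle$), and the relation $e_i^n \in G^{(n)}$ forces its order to divide $n$. Hence the product
\[
|G(n)| = \prod_{i=1}^m |G_i(n)/G_{i-1}(n)|
\]
is always a divisor of $n^m$, with equality if and only if each factor is exactly $C_n$; so proving $|G(n)| = n^m$ suffices.

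Next I would prove $|G(n)| = n^m$ by induction on the Hirsch length $m$. The base $m = 0$ is trivial. For the inductive step let $\bar G = G/G_1$, a finitely generated torsion-free nilpotent group of Hirsch length $m-1$ whose basis $(\bar e_2, \ldots, \bar e_m)$ is inherited from $G$. Since the surjection $\pi : G \to \bar G$ sends $n$-th powers to $n$-th powers, $\pi(G^{(n)}) = \bar G^{(n)}$ and consequently $\pi^{-1}(\bar G^{(n)}) = G_1 G^{(n)}$. Combining this with the inductive hypothesis yields
\[
|G(n)| = [G : G_1 G^{(n)}] \cdot [G_1 G^{(n)} : G^{(n)}] = n^{m-1} \cdot [G_1 : G_1 \cap G^{(n)}],
\]
so the induction closes once we show $[G_1 : G_1 \cap G^{(n)}] = n$ for $n$ coprime to a suitably enlarged $n_0$. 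Since $G_1 \cong \Z$ is generated by $e_1$ and contains $e_1^n \in G^{(n)}$, this index already divides $n$; the task is to establish the reverse inequality, i.e.\ that $e_1^k \in G^{(n)}$ forces $n \mid k$.

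The main obstacle is this final step. I would attack it by exploiting the polynomial formulas \eqref{eq:coord.mult} and \eqref{eq:coord.power}: an arbitrary element of $G^{(n)}$ is a product $\prod_\ell (\bfe^{\bfv_\ell})^{\pm n} = \prod_\ell \bfe^{\epsilon(\bfv_\ell, \pm n)}$, and iterating \eqref{eq:coord.mult} expresses its $e_1$-coordinate as a rational polynomial in the coordinates $\bfv_\ell$ and in $n$. The crucial structural fact---which is essentially a Hall--Petresco collection process adapted to Mal'cev coordinates---is that every rational coefficient in this polynomial is of the form (rational number with denominator coming from the finite set of denominators in $\mu_j, \epsilon_j$) times a product of terms $n$ and $\binom{n}{k}$ with $2 \le k \le c$, where $c$ is the nilpotency class of $G$. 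Since $\binom{n}{k}/n = \binom{n-1}{k-1}/k$ is an integer whenever $\gcd(n, k) = 1$, choosing $n_0$ divisible by all denominators appearing in $\mu_j$ and $\epsilon_j$, by every prime at most $c$, and by the $n_0$ produced by the inductive hypothesis guarantees that the $e_1$-coordinate of every element of $G^{(n)}$ is divisible by $n$. This completes the induction and, through the first reduction, the proof.
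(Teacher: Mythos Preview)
Your reduction to $|G(n)|=n^m$ and the inductive setup via $\bar G=G/G_1$ are correct, and the argument cleanly isolates the real difficulty: showing that $e_1^k\in G^{(n)}$ forces $n\mid k$. However, your treatment of this last step is where the proposal falls short. You assert that the $e_1$-coordinate of a product $\prod_\ell(\bfe^{\bfv_\ell})^{\pm n}$ is a sum of terms, each a bounded-denominator rational times a nonempty product of factors $n$ and $\binom{n}{k}$, and you justify this only as ``essentially a Hall--Petresco collection process adapted to Mal'cev coordinates''. That is not a proof: Hall--Petresco controls the exponents arising in a single $n$-th power, but you also need to track what happens under repeated multiplication via the $\mu_j$, and there is no obvious reason the $\mu_j$ should produce binomial-coefficient factors. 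Moreover, the number of factors $\ell$ is unbounded, so speaking of ``this polynomial'' requires care. As written, this step is a genuine gap.

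The paper closes the gap with a much lighter observation that bypasses Hall--Petresco entirely. Since $(\bfe^\bfv)^0=1$, each polynomial $\epsilon_i(\bfv,-)$ has no constant term in its last variable, so $\epsilon_i(\bfv,n)=n\cdot q_i(\bfv,n)$ with $q_i(\bfv,n)\in\frac{1}{n_0}\Z$; as $\epsilon_i(\bfv,n)\in\Z$ and $\gcd(n,n_0)=1$, this forces $n\mid\epsilon_i(\bfv,n)$. Likewise $\mu_i(\mathbf 0,\mathbf 0)=0$, so $\mu_i$ has no constant term and the same divisibility argument gives $n\mid\mu_i(n\bfv,n\bfw)$. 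These two facts together yield directly that $G^{(n)}=\{\bfe^{n\bfv}:\bfv\in\Z^m\}$, from which both your claim about the $e_1$-coordinate and the full statement $G_i(n)/G_{i-1}(n)\cong C_n$ follow immediately by uniqueness of coordinates. This makes your induction on $m$ unnecessary: once one has the explicit description of $G^{(n)}$, all the quotients can be read off at once.
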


\begin{proof}
Pick $n_0$ so that the coefficients of the polynomials $\mu_i,\epsilon_i$ given in \eqref{eq:coord.mult} and \eqref{eq:coord.power} all lie in $\frac{1}{n_0}\Z$. Fix $n$ coprime to $n_0$, and write $\Phi_n: G \to G(n)$ for the quotient homomorphism.

Note that $\epsilon_i(\bfv,0) = 0$ for each $i$ and each $\bfv \in \Z^m$, so that the polynomials $\epsilon_i(\bfv,-): \Z \to \Z$ have no constant term. By the definition of $n_0$, for each $i$ and each $\bfv \in \Z^m$ we have $\epsilon_i(\bfv,n) = c_{i,\bfv,n}n/n_0\in \Z$ for some $c_{i,\bfv,n} \in \Z$. As $n$ is coprime to $n_0$, it follows that $n$ divides $\epsilon_i(\bfv,n)$, and so
\begin{equation}\label{eq:coords.Cn}
G^{(n)} = \langle e_1^n,\ldots,e_m^n \rangle.
\end{equation}
The polynomials $\mu_i: \Z^{2m} \to \Z$ similarly have no constant term, and so there exist polynomials $\bar\mu_1,\ldots,\bar\mu_m: \Z^{2m} \to \Z$ such that
\begin{equation} \label{eqn:multn}
\bfe^{\bfv n} \cdot \bfe^{\bfw n} = e_1^{\bar\mu_1(\bfv,\bfw)n} \cdots e_m^{\bar\mu_m(\bfv,\bfw)n}.
\end{equation}
By \eqref{eq:coords.Cn} and successive applications of \eqref{eqn:multn}, it therefore follows that
\begin{equation}\label{eq:G^(n)}
G^{(n)}=\{\bfe^{\bfv n}:\bfv\in\Z^m\}.
\end{equation}
It is clear that $G_i(n)/G_{i-1}(n)$ is generated by $G_{i-1}(n)\Phi_n(e_i)$ and is a quotient of $C_n$, so it is enough to show that $\Phi_n(e_i^r) \notin G_{i-1}(n)$ for $r=1,\ldots,n-1$. If, on the contrary, $\Phi_n(e_i^r) \in G_{i-1}(n)$ for some such $r$, then if would follow from \eqref{eq:G^(n)} that
\[
e_i^r = \bfe^{\bfv n} \cdot e_1^{w_1} \cdots e_{i-1}^{w_{i-1}}
\]
for some $\bfv \in \Z^m$ and some $w_1,\ldots,w_{i-1} \in \Z$, which would give
\[
e_i^{v_in-r} e_{i+1}^{v_{i+1}n} \cdots e_m^{v_mn} \in G_{i-1}.
\]
Since $v_in-r \neq 0$ whenever $1 \leq r \leq n-1$, this would contradict the uniqueness of coordinates, and so we indeed have $\Phi_n(e_i^r) \notin G_{i-1}(n)$, as required.
\end{proof}

\begin{remark}
Note that the conclusion of \cref{lem:finite} does not necessarily hold for an arbitrary $n\in\N$. For instance, if $G = \left(\begin{smallmatrix}1&\Z&\Z\\0&1&\Z\\0&0&1\end{smallmatrix}\right)$ is the integral Heisenberg group then $m=3$ and $G_1(2)/G_0(2)$ is trivial.
\end{remark}

\begin{proof}[Proof of \cref{prop:negl.precise}]
Throughout this proof $n$ is a prime. Write $d$ for the degree of $p$, and for each prime $n$ write $\Phi_n: G \to G(n)$ for the quotient homomorphism. By \cref{lem:finite} the desired conclusion \eqref{eq:Np.negl} is equivalent to the statement that
\begin{equation}\label{eq:Np.negl'}
\frac{|\Phi_n(\mathcal{N}_p)|}{n^m}\to0
\end{equation}
as $n\to\infty$ through the primes. If $m = 1$ then $|\mathcal{N}_p|\le d$, and so for every $n$ we also have $|\Phi_n(\mathcal{N}_p)|\le d$, which certainly implies \eqref{eq:Np.negl'}. We may therefore assume that $m\ge2$ and proceed by induction on $m$.

We can view $p$ as an element of $\Q[X_1,\ldots,X_m]$. If $p \in \Q[X_2,\ldots,X_m]$ then it follows by applying the induction hypothesis to $G/G_1$ that
\[
\frac{|\Phi_n(\mathcal{N}_p)G_1(n)/G_1(n)|}{n^{m-1}}\to0
\]
as $n\to\infty$, which implies \eqref{eq:Np.negl'} by \cref{lem:finite}. We may therefore assume that
\[
p(X_1,\ldots,X_m) = \sum_{i=0}^d X_1^i p_i(X_2,\ldots,X_m)
\]
for some $p_0,\ldots,p_d \in \Q[X_2,\ldots,X_m]$ with $p_j \neq 0$ for some $j \geq 1$. Writing
\[
\mathcal{P} = \{ g \in G : p_j(\alpha_2(g),\ldots,\alpha_m(g)) = 0 \},
\]
we have
\[
\frac{|\Phi_n(\mathcal{P})/G_1(n)|}{n^{m-1}}\to0
\]
as $n\to\infty$ by induction, and hence
\begin{equation}\label{eq:P.negl}
\frac{|\Phi_n(\mathcal{P})|}{n^m}\to0.
\end{equation}
For $g\in\mathcal{N}_p\setminus\mathcal{P}$, on the other hand, $\alpha_1(g)$ is a root of the non-zero polynomial
\[
p(X,\alpha_2(g),\ldots,\alpha_m(g)) \in\Q[X]
\]
of degree at most $d$, and so $|(\mathcal{N}_p\setminus\mathcal{P}) \cap G_1x| \leq d$ for all $x \in G$. By taking images in $G(n)$ this implies that $|\Phi_n(\mathcal{N}_p\setminus\mathcal{P}) \cap G_1(n)x| \leq d$ for all $x \in G(n)$, and so
\[
|\Phi_n(\mathcal{N}_p\setminus\mathcal{P})| \leq d|G(n)/G_1(n)| = dn^{m-1}
\]
for every large enough prime $n$. Combined with \eqref{eq:P.negl}, this implies \eqref{eq:Np.negl'}, as required.
\end{proof}

\begin{proof}[Proof of \cref{prop:leibman.Z}]
Let $G'$, $\pi$ and $\hat\varphi$ be as given by \cref{lem:tf.quot}. It follows from Propositions \ref{prop:coord.poly} and \ref{prop:negl.precise} that $\hat\varphi^{-1}(0)$ is negligible by finite quotients of $G'$, and hence that $\varphi^{-1}(0)=\pi^{-1}(\hat\varphi^{-1}(0))$ is negligible by finite quotients of $G$, as required.
\end{proof}

\begin{lemma}\label{lem:coset.negl}
Let $G$ be a group, let $H \lhd G$ be a finite-index normal subgroup, and let $g\in G$. Let $V\subset H$ be negligible by finite quotients in $H$. Then $Vg$ is negligible by finite quotients in $G$.
\end{lemma}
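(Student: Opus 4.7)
The plan is to reduce a ``negligible in $H$'' witness, which is only normal in $H$, to a witness that is normal in $G$, and then to account for the right translation by $g$.

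Fix $\eps>0$. Since $V$ is negligible by finite quotients of $H$, there is a finite-index normal subgroup $M\lhd H$ with $|VM/M|\le\eps\,[H:M]$. The first task is to replace $M$ by a finite-index subgroup of $G$ that is normal in $G$. The natural choice is the normal core $N=\bigcap_{x\in G}xMx^{-1}$: since each conjugate $xMx^{-1}$ lies in $xHx^{-1}=H$ and has index $[H:M]$ there, $N$ lies in $M\subset H$, is normal in $G$, and has finite index in $G$ (indeed $N$ is the kernel of the action of $G$ on the finite set of left cosets of $M$ in $G$).

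Next I would carry out the counting. Since $N\subset M$, every coset of $N$ in $H$ that meets $V$ lies inside some coset of $M$ in $H$ meeting $V$, and each such coset of $M$ contains exactly $[M:N]$ cosets of $N$; hence
\[
|VN/N|\le[M:N]\cdot|VM/M|\le\eps\,[M:N][H:M]=\eps\,[H:N].
\]
Because $N\lhd G$ we have $gN=Ng$, so $(Vg)N=VNg$, and right multiplication by $g$ is a bijection on $G/N$; thus $|(Vg)N/N|=|VN/N|$. Finally, $[H:N]\le[G:N]$, giving $|(Vg)N/N|\le\eps\,[G:N]$, which is exactly the inequality required for $Vg$ to be negligible by finite quotients of $G$.

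The only delicate point is the passage from $M\lhd H$ to $N\lhd G$; once the normal core is in hand, the inequalities above are immediate, and the bookkeeping with the coset index $[G:H]$ only helps, because $[H:N]\le[G:N]$, so no additional worsening of $\eps$ is needed.
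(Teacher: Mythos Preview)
Your proof is correct and follows essentially the same approach as the paper's: pass from the witness $M\lhd H$ to a finite-index subgroup $N\lhd G$ contained in $M$, use $|VN/N|\le[M:N]\,|VM/M|$, and note that right translation by $g$ is a bijection on $G/N$. The only cosmetic difference is that you explicitly take $N$ to be the normal core, whereas the paper merely asserts the existence of such an $L\lhd G$ inside its witness $K$.
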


\begin{proof}
Let $\varepsilon > 0$. Then there exists a normal subgroup $K \lhd H$ of finite index such that $|VK / K| \le\varepsilon|G/K|$. Since $K$ has finite index in $G$, there exists a finite-index subgroup $L < K$ such that $L \lhd G$, and then we have $|VgL / L| = |VL / L| \le |VK / K| |K / L| \leq \varepsilon |G/K| |K/L| = \varepsilon |G / L|$.
\end{proof}

\begin{proof}[Proof of \cref{thm:v.nilp.indep}]
Since $N^k$ has finite index in $G^k$, the theorem may be restated as saying that for every $g\in G^k$ with $G_\varphi\cap N^kg\ne N^kg$ we have $G_\varphi\cap N^kg$ negligible by finite quotients of $G^k$. This follows readily from \eqref{eq:solutions.coset}, \cref{lem:phi_t.poly}, \cref{thm:leibman,lem:coset.negl}.
\end{proof}

\section{Finite quotients} \label{sec:gallagher}

In this section we prove \cref{thm:gallagher} and deduce \cref{thm:res.fin.} from it. We fix $k \in \N$ throughout. In \cref{ssec:sketchy-sketch,ssec:sketch,ssec:defns,ssec:details} we also fix a finite group $G$ and a normal subgroup $N \lhd G$, and define
\[ \mathcal{N}_k(G) := \{ (x_1,\ldots,x_{k+1}) \in G^{k+1} \mid [x_1,\ldots,x_{k+1}] = 1 \}. \]
Note that \cref{thm:gallagher} is equivalent to the following result.
\begin{theorem} \label{thm:gallagher2}
We have $|\mathcal{N}_k(G)| \leq |\mathcal{N}_k(N)| \times |\mathcal{N}_k(G/N)|$.
\end{theorem}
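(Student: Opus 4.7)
My plan is to reduce to a pointwise per-coset bound. Partition $\mathcal{N}_k(G)$ over the fibres of the quotient map $G^{k+1} \to (G/N)^{k+1}$: since $(x_i) \in \mathcal{N}_k(G)$ implies $(\bar x_i) \in \mathcal{N}_k(G/N)$, we get
\[ |\mathcal{N}_k(G)| = \sum_{T \in \mathcal{N}_k(G/N)} L(T), \]
where $L(T)$ is the number of lifts of $T$ lying in $\mathcal{N}_k(G)$. The goal is then to prove the pointwise inequality $L(T) \leq |\mathcal{N}_k(N)|$ for every $T \in \mathcal{N}_k(G/N)$; summing over $T$ yields the theorem. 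Small test cases ($S_3/A_3$, $A_4/V_4$, $D_8$ over its cyclic subgroup of index $2$) are consistent with this pointwise claim.

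For the per-coset bound, fix lifts $y_1,\ldots,y_{k+1} \in G$ of $T$ and induct on $k$. The base case $k=1$ is essentially Gallagher's theorem: one computes $L(T) = \sum_{z \in y_1 N} |C_N(z)|$ using that $y_2 N \cap C_G(y_1 n_1)$ is a non-empty coset of $C_N(y_1 n_1)$, which by orbit-stabiliser equals $|N|$ times the number of $N$-orbits on $y_1 N$ under conjugation. A Burnside argument, recognising this action (via the bijection $y_1 n \leftrightarrow n$) as a $y_1^{-1}$-twisted conjugation $n' \cdot n = \sigma(n')\, n\, n'^{-1}$ on $N$ with $\sigma$ conjugation by $y_1^{-1}$, bounds the orbit count by the number of ordinary conjugacy classes of $N$, giving $L(T) \leq |\mathcal{N}_1(N)|$.

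For the inductive step, expand the commutator as $[[y_1 n_1,\ldots,y_k n_k],\, y_{k+1} n_{k+1}]$ and sum over $n_{k+1}$ first:
\[ L(T) \leq \sum_{(n_i)_{i \leq k} \in N^k} \bigl| C_N\bigl([y_1 n_1,\ldots,y_k n_k]\bigr) \bigr|. \]
The twisted iterated-commutator map $\Phi\colon N^k \to u_0 N$, $(n_i) \mapsto [y_1 n_1, \ldots, y_k n_k]$, with $u_0 = [y_1,\ldots,y_k]$, is equivariant under a suitable simultaneous conjugation action of $N$ (on the domain) and ordinary conjugation (on the codomain), so $|\Phi^{-1}(z)|$ is constant along $N$-orbits of $u_0 N$. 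This recasts the sum as $|N|$ times a weighted orbit count on $u_0 N$; the task is to compare it to the analogous untwisted expression $|\mathcal{N}_k(N)|$, which equals $|N|$ times a weighted orbit count on $N$ with weights given by fibres of the untwisted map $\gamma_k\colon N^k \to N$.

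The main obstacle is exactly this twisted-versus-untwisted orbit-count comparison. Because $u_0 N$ is a coset rather than a subgroup, $N$-orbits on $u_0 N$ do not biject naturally with $N$-orbits on $N$, and controlling the fibre weights $|\Phi^{-1}(z)|$ across orbits requires a refined Burnside-type argument in $k{+}1$ variables. I expect this is where the new twisted-conjugacy language and the careful fibre-by-fibre bookkeeping of the forthcoming subsections enter; once this comparison is made, the induction closes and summing the per-coset bound over $T \in \mathcal{N}_k(G/N)$ completes the proof.
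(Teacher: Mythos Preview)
Your reduction to the per-coset bound $L(T)\le|\mathcal{N}_k(N)|$ is exactly the paper's first step (their Lemma~6.3), and your $k=1$ argument via twisted conjugation is fine. The genuine gap is that your ``induction on $k$'' is not actually inductive. Bounding the sum over $n_{k+1}$ by $\sum_{(n_i)_{i\le k}}|C_N([y_1n_1,\ldots,y_kn_k])|$ is precisely the trivial inequality $f_k(y_1N,\ldots,y_kN,y_{k+1}N)\le f_k(y_1N,\ldots,y_kN,N)$ in the paper's notation; the remaining task, $f_k(y_1N,\ldots,y_kN,N)\le f_k(N,\ldots,N)$, is still a level-$k$ statement and nothing you have written reduces it to level $k-1$. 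Your equivariance observation is correct and does rewrite each side as $|N|$ times a sum of $\Phi$-fibre sizes over $N$-orbit representatives, but you are now comparing fibre systems over two genuinely different $N$-sets ($u_0N$ versus $N$) with no relation between them, and no Burnside-type identity bridges that gap. What you have deferred is not a technicality but the entire content of the theorem.

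The paper's route is quite different from the orbit-counting you anticipate. Rather than peeling off the last variable, it replaces the twisted cosets by $N$ one at a time: using the identity $[y^{-1},g]=[g,y]^{y^{-1}}$ to shuffle positions, the per-coset bound is reduced to the single key inequality $f_k(xN,g,N,\ldots,N)\le f_k(N,g,N,\ldots,N)$ for a \emph{fixed} element $g\in G$ (Proposition~6.4). This is proved not by Burnside but by a combinatorial period argument. A commutator identity of the shape $[zy,g,n_3,\ldots,n_{k+1}]=[z,g,n_3^{\alpha_3^{-1}},\ldots]^{\alpha_{k+2}}\,[y,g,n_3,\ldots,n_{k+1}]$ is used to build a directed labelled multigraph on $N^{k-1}$, and on each connected component one defines a $\Z/d\Z$-valued ``level'' so that $f_k(x^iN,g,n_3,\ldots,n_{k+1})$ depends only on level plus $i$ modulo $d$. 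The inequality then drops out of an elementary lemma: if nonnegative integers $r_j,h_j$ ($j\in\Z/d\Z$) satisfy $r_jh_{j+1}=r_{j+1}h_j$, then $\sum_j r_jh_{j+1}\le\sum_j r_jh_j$. None of this is visible from your orbit-counting setup.
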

We prove \cref{thm:gallagher2} in \cref{ssec:sketchy-sketch,ssec:sketch,ssec:defns,ssec:details}. In \cref{ssec:res.fin} we prove \cref{thm:res.fin.}.

\subsection{Submultiplicativity of degree of nilpotence} \label{ssec:sketchy-sketch}

Here we sketch the proof of \cref{thm:gallagher2}, omitting the proof of a technical result---\cref{prop:gallagher}---that we give in \cref{ssec:sketch,ssec:defns,ssec:details}.

For subsets $A_1,\ldots,A_{k+1} \subseteq G$, define
\[ f_k(A_1,\ldots,A_{k+1}) = | \mathcal{N}_k(G) \cap (A_1 \times \cdots \times A_{k+1}) |. \]
If $A_i = \{a_i\}$ is a singleton for some $i$, for simplicity of notation we will write $f_k(\ldots,\{a_i\},\ldots)$ as $f_k(\ldots,a_i,\ldots)$.

Given cosets $x_1N,\ldots,x_{k+1}N \in G/N$, it is clear that if $f_k(x_1N,\ldots,x_{k+1}N) \neq 0$ then the element $[x_1N,\ldots,x_{k+1}N]$ is trivial in $G/N$. Thus the number of elements $(x_1N,\ldots,x_{k+1}N) \in (G/N)^{k+1}$ with $f_k(x_1N,\ldots,x_{k+1}N) \neq 0$ is at most $|\mathcal{N}_k(G/N)|$, and we obtain
\[ |\mathcal{N}_k(G)| \leq |\mathcal{N}_k(G/N)| \times \max \{ f_k(x_1N,\ldots,x_{k+1}N) \mid (x_1N,\ldots,x_{k+1}N) \in (G/N)^{k+1} \}. \]
Since $f_k(N,\ldots,N) = |\mathcal{N}_k(N)|$, \cref{thm:gallagher2} therefore follows from the following Lemma.

\begin{lemma} \label{lem:frN}
For every $(x_1N,\ldots,x_{k+1}N) \in (G/N)^{k+1}$ we have
\[ f_k(x_1N,\ldots,x_{k+1}N) \leq f_k(N,\ldots,N). \]
\end{lemma}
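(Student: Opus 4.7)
The plan is to induct on $k$. The base case $k = 0$ is immediate: $f_0(x_1N)$ equals $1$ if $x_1\in N$ and $0$ otherwise, so $f_0(x_1N)\le 1=f_0(N)$.

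For the inductive step, first peel off the last variable. Given $(y_1,\ldots,y_k)\in x_1N\times\cdots\times x_kN$, set $z=[y_1,\ldots,y_k]$; the equation $[z,y_{k+1}]=1$ in $y_{k+1}$ has solution set $C_G(z)$, and $C_G(z)\cap x_{k+1}N$ is either empty or a single coset of $C_G(z)\cap N$, in the latter case of size $|C_G(z)\cap N|$. Hence
\[
f_k(x_1N,\ldots,x_{k+1}N)\;\le\;\sum_{(y_1,\ldots,y_k)\in x_1N\times\cdots\times x_kN}\bigl|C_G([y_1,\ldots,y_k])\cap N\bigr|,
\]
while the analogous computation without any coset restriction on $y_{k+1}$ yields the equality
\[
f_k(N,\ldots,N)\;=\;\sum_{(n_1,\ldots,n_k)\in N^k}\bigl|C_N([n_1,\ldots,n_k])\bigr|.
\]
Both sums have $|N|^k$ terms, so the task reduces to dominating the first by the second.

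The next step is orbit counting. Since $N$ is normal, its diagonal conjugation action preserves each coset $x_iN$, and tuples in a common $N$-orbit contribute the same summand above. In the case $k=1$, the bijection $x_1N\to N$ given by $x_1n\mapsto n$ transports the conjugation action of $N$ on $x_1N$ to a twisted conjugation on $N$ via the automorphism induced by conjugation by $x_1$; it is classical that the number of such twisted conjugacy classes equals the number of conjugacy classes of $N$ fixed by that automorphism, and so is at most the total number $c$ of conjugacy classes of $N$. Combined with $|C_N(n)|=|C_G(n)\cap N|$ and orbit--stabiliser, this gives $f_1(x_1N,x_2N)\le|N|\cdot c=f_1(N,N)$, recovering Gallagher's original proof.

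The hard part will be carrying out this orbit-count uniformly for $k\ge 2$, where $[y_1,\ldots,y_k]$ is a genuinely nonlinear function of several coset variables and so a single application of the inductive hypothesis does not suffice. The natural remedy is to strengthen the induction hypothesis to allow arbitrary subsets $A_i\subseteq G$ (as anticipated by the authors' general definition of $f_k(A_1,\ldots,A_{k+1})$), and to track the inner commutator through a sequence of twisted conjugations built from $x_1,\ldots,x_k$. Even stating this refined hypothesis requires substantial notation, which presumably accounts for the division of the proof across \cref{ssec:sketch,ssec:defns,ssec:details}.
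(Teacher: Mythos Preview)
Your proposal is incomplete: you handle $k=0$ and $k=1$ correctly, but you openly concede that the case $k\ge2$ is ``the hard part'' and do not actually carry it out. Saying that a ``refined hypothesis requires substantial notation'' is not a proof, and the orbit-counting heuristic you sketch (diagonal $N$-conjugation on $x_1N\times\cdots\times x_kN$) does not obviously reduce to a single twisted-conjugacy comparison once $k\ge2$, because the map $(y_1,\ldots,y_k)\mapsto[y_1,\ldots,y_k]$ mixes the coordinates nonlinearly. There is no evident analogue of the Burnside-type identity that made the $k=1$ case work.

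The paper's proof is structurally different and worth contrasting. Rather than inducting on $k$ and peeling off the \emph{last} variable via centralisers, the paper replaces the coset variables $x_{i+1}N$ by $N$ \emph{one at a time}, showing
\[
f_k(x_1N,\ldots,x_iN,x_{i+1}N,N,\ldots,N)\le f_k(x_1N,\ldots,x_iN,N,\ldots,N)
\]
for each $i$. Each such step is reduced, via a commutator identity swapping the first two arguments, to the single technical statement (\cref{prop:gallagher}) that $f_k(xN,g,N,\ldots,N)\le f_k(N,g,N,\ldots,N)$ for fixed $g$. That proposition is where all the work lies; it is proved not by orbit counting but by a commutator expansion (\cref{lem:mainid}) that turns the problem into a combinatorial one on a certain directed graph on $N^{k-1}$, together with a periodicity argument. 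Your guess that the paper strengthens the induction to arbitrary subsets $A_i$ is not what happens: the general $f_k(A_1,\ldots,A_{k+1})$ notation is used only for bookkeeping, and the real reduction is to a \emph{single fixed element} in the second slot, not to smaller $k$.
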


The proof of \cref{lem:frN} uses the following proposition, to be proved in \cref{ssec:sketch,ssec:defns,ssec:details}.
\begin{prop} \label{prop:gallagher}
For any $g \in G$ and $xN \in G/N$, we have 
\[ f_k(xN,g,N,N,\ldots,N) \leq f_k(N,g,N,N,\ldots,N). \]
\end{prop}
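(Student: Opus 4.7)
The plan is to reduce, via a fiber decomposition, to a comparison of weighted sums over the commutator sets $[xN,g]$ and $[N,g]$, and then to use $G$-conjugation invariance combined with a commutator identity.

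For the base case $k=1$, the statement reduces to $|xN\cap C_G(g)| \leq |N \cap C_G(g)|$, which follows from the same coset observation as in \cref{lem:coset}: the left-hand side is either empty or equal to $|C_N(g)|$, which is what the right-hand side equals. For $k\geq 2$, I would sum out the last $k-1$ coordinates by setting
\[
F(c) \;:=\; |\{(x_3,\ldots,x_{k+1})\in N^{k-1} : [c,x_3,\ldots,x_{k+1}] = 1\}|.
\]
Because $N\trianglelefteq G$, conjugation by any $h\in G$ permutes $N^{k-1}$ and simultaneously conjugates all arguments of the iterated commutator, so $F$ is a $G$-class function. Also, arguing as in the proof of \cref{prop:non-id}, the map $y\mapsto[y,g]$ has fibers of size $|C_N(g)|$ on both $xN$ and $N$. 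Hence the proposition reduces to
\[
\sum_{c\in [xN,g]} F(c) \;\leq\; \sum_{c\in [N,g]} F(c).
\]

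Next I would use the identity $[xn,g]=[x,g]^n[n,g]$. Conjugating by $n$ gives $n[xn,g]n^{-1} = [x,g]\cdot[g,n^{-1}]$, so by $G$-conjugation invariance $F([xn,g]) = F([x,g]\cdot[g,n^{-1}])$. Applying the involution $n\leftrightarrow n^{-1}$ on $N$ and noting $[g,N]=[N,g]^{-1}$ rewrites the left-hand sum as $\sum_{b\in[N,g]} F([x,g]\,b^{-1})$, so the task becomes
\[
\sum_{b\in[N,g]} F\!\left([x,g]\,b^{-1}\right) \;\leq\; \sum_{b\in[N,g]} F(b).
\]
The natural way to prove this is to construct a bijection $\pi$ of $[N,g]$ together with, for each $b$, a conjugator $h_b\in G$ such that $h_b^{-1}[x,g]\pi(b)^{-1}h_b = b$, or more modestly a $G$-conjugacy-preserving injection of the left multiset into the right.

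The main obstacle is exactly this last step. When $[x,g]\notin N$, the multiset $\{[x,g]b^{-1}\}_{b\in[N,g]}$ lies in a coset of $N$ disjoint from $N$, so no single element of $G$ conjugates the whole family into $N$; worse, $G$-conjugacy classes respect cosets of $N$, so the comparison cannot be done by a single class-by-class matching. I expect the paper's later subsections to introduce an auxiliary combinatorial structure---for instance, a filtration indexed by the image of $[x,g]$ in some quotient like $G/C_G(N)$, or an organization of the iterated commutator as a ``walk'' in $N$ analogous to the polynomial-mapping machinery of Section~\ref{sec:polys}---that decomposes both sides into pieces between which the required pointwise or fibrewise comparison can be carried out, possibly by a further induction on $k$ or on the derived length of $N$.
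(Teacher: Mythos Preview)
Your base case and your fiber reduction are both correct, and you have rightly identified that the remaining inequality
\[
\sum_{b\in[N,g]} F\bigl([x,g]\,b^{-1}\bigr) \;\le\; \sum_{b\in[N,g]} F(b)
\]
is the crux. However, this is a genuine gap, not a technicality, and I do not see how to close it from your setup. By summing out the last $k-1$ coordinates into the class function $F$, you have discarded exactly the structural information the paper exploits: there is no evident relationship between the tuples contributing to $F([x,g]b^{-1})$ and those contributing to $F(b)$, and (as you note) $G$-conjugacy respects cosets of $N$, so a conjugacy-matching argument cannot cross from the coset $[x,g]N$ back to $N$.

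The paper's argument does \emph{not} sum out the last $k-1$ coordinates. It keeps them and builds a directed multigraph on $N^{k-1}$: for each $y\in xN$ and each $(n_3,\ldots,n_{k+1})\in N^{k-1}$ with $[y,g,n_3,\ldots,n_{k+1}]=1$, the higher-order identity
\[
[zy,g,n_3,\ldots,n_{k+1}] \;=\; \bigl[z,g,n_3^{\alpha_3^{-1}},\ldots,n_{k+1}^{\alpha_{k+1}^{-1}}\bigr]^{\alpha_{k+2}}\,[y,g,n_3,\ldots,n_{k+1}]
\]
(with each $\alpha_i$ depending only on $y$ and $n_3,\ldots,n_{i-2}$) yields an edge from $(n_3,\ldots,n_{k+1})$ to $(n_3^{\alpha_3^{-1}},\ldots,n_{k+1}^{\alpha_{k+1}^{-1}})$. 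Traversing such an edge gives $f_k(x^iN,g,n_3,\ldots,n_{k+1})=f_k(x^{i-1}N,g,\tilde n_3,\ldots,\tilde n_{k+1})$, so on each connected component one obtains a $\Z/d\Z$-valued level function $L$ with the property that $f_k(x^iN,g,\cdot)$ depends only on $L(\cdot)+i$. Writing $r_j=|L^{-1}(j)|$ and $h_j$ for the common value of $f_k$ at level $j$, an explicit bijection shows $r_jh_{j+1}=r_{j+1}h_j$ for all $j$, and the desired inequality then follows from the elementary fact that $\sum_j r_jh_{j+1}\le\sum_j r_jh_j$ whenever these cross-relations hold. Your instinct about a ``walk'' structure was on target, but the walk lives in $N^{k-1}$ rather than in $N$, and the operative commutator identity is the iterated one above, not $[xn,g]=[x,g]^n[n,g]$.
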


\begin{proof}[Proof of \cref{lem:frN}]
We will show that for each $i \in \{ 0,\ldots,k \}$, we have
\[ f_k(x_1N,\ldots,x_iN,x_{i+1}N,N,\ldots,N) \leq f_k(x_1N,\ldots,x_iN,N,N,\ldots,N), \]
which will imply the result. Note that for $i = 0$, this follows immediately from \cref{prop:gallagher} (by summing over $g \in N$), hence we can without loss of generality assume that $i \geq 1$.

Let $s \in \mathbb{N}$. Since $[y^{-1},g] = [g,y]^{y^{-1}}$ for all $y,g \in G$, we have an identity
\[ [(xn)^{-1},g,n_3,\ldots,n_{s+1}] = [g,xn,n_3^{xn},\ldots,n_{s+1}^{xn}]^{(xn)^{-1}}. \]
As $N$ is normal in $G$, this induces a bijection
\begin{align*}
\mathcal{N}_s(G) \cap \left(x^{-1}N \times \{g\} \times N^{s-1}\right) &\leftrightarrow \mathcal{N}_s(G) \cap \left(\{g\} \times xN \times N^{s-1}\right),
\end{align*}
and so we have
\[ f_s(x^{-1}N,g,N,\ldots,N) = f_s(g,xN,N,\ldots,N) \]
for all $g \in G$ and $xN \in G/N$. Thus \cref{prop:gallagher} implies that
\[ f_s(g,xN,N,\ldots,N) = f_s(x^{-1}N,g,N,\ldots,N) \leq f_s(N,g,N,\ldots,N) = f_s(g,N,N,\ldots,N). \]
Now fix $i \in \{ 1,\ldots,k \}$. Then this last inequality implies
\begin{align*}
f_k(x_1N,\ldots,x_iN,x_{i+1}N,N,\ldots,N) &= \sum_{n_1,\ldots,n_i \in N} f_{k-i+1}([x_1n_1,\ldots,x_in_i], x_{i+1}N,N,\ldots,N) \\
&\leq \sum_{n_1,\ldots,n_i \in N} f_{k-i+1}([x_1n_1,\ldots,x_in_i], N,N,\ldots,N) \\
&= f_k(x_1N,\ldots,x_iN,N,N,\ldots,N),
\end{align*}
as required.
\end{proof}

\subsection{Sketch of the proof of \texorpdfstring{\cref{prop:gallagher}}{Proposition \ref{prop:gallagher}}} \label{ssec:sketch}

Here we give a proof of \cref{prop:gallagher}, omitting proofs of two equalities to be proved in \cref{ssec:details}. Throughout this section, fix $g \in G$ and $xN \in G/N$.

We aim to show that 
\[ f_k(xN,g,N,\ldots,N) \leq f_k(N,g,N,\ldots,N), \]
or in other words,
\[ \sum_{(n_3,\ldots,n_{k+1}) \in N^{k-1}} f_k(xN,g,n_3,\ldots,n_{k+1}) \leq \sum_{(n_3,\ldots,n_{k+1}) \in N^{k-1}} f_k(N,g,n_3,\ldots,n_{k+1}). \]

The idea of the proof is to split this up into smaller parts: that is, to find a partition $N_1 \sqcup \cdots \sqcup N_p$ of $N^{k-1}$ such that
\begin{equation} \label{eqn:frN}
\sum_{(n_3,\ldots,n_{k+1}) \in N_q} f_k(xN,g,n_3,\ldots,n_{k+1}) \leq \sum_{(n_3,\ldots,n_{k+1}) \in N_q} f_k(N,g,n_3,\ldots,n_{k+1})
\end{equation}
for each $q \in \{ 1,\ldots,p \}$. The proof relies on periodic behaviour (in a certain sense) of the numbers $f_k(x^iN,g,n_3,\ldots,n_{k+1})$ where $i \in \Z$ and $(n_3,\ldots,n_{k+1}) \in N_q$. In particular, each part $N_q$ will be subdivided further: in \cref{ssec:defns} we will define a function
\[ L: N_q \to \Z/d\Z \]
for some $d = d(q) \in \N$, with the property that, for any $i \in \Z$ and $(n_3,\ldots,n_{k+1}) \in N_q$, the number $f_k(x^iN,g,n_3,\ldots,n_{k+1})$ depends only on the value of $L(n_3,\ldots,n_{k+1})+i$ in $\Z/d\Z$. That is, given any $(n_3,\ldots,n_{k+1}),(\tilde{n}_3,\ldots,\tilde{n}_{k+1}) \in N_q$ and $i,\tilde{i} \in \Z$, we have
\begin{equation} \label{eqn:period}
\begin{aligned}
\text{if} \quad &L(n_3,\ldots,n_{k+1})+i = L(\tilde{n}_3,\ldots,\tilde{n}_{k+1})+\tilde{i} \quad \text{(in $\Z/d\Z$)}, \\
\text{then} \quad &f_k(x^iN,g,n_3,\ldots,n_{k+1}) = f_k(x^{\tilde{i}}N,g,\tilde{n}_3,\ldots,\tilde{n}_{k+1});
\end{aligned}
\end{equation}
we will prove \eqref{eqn:period} in \cref{ssec:details}.

This implies that there exist some integers $h_j$ (where $j \in \Z/d\Z$) such that
\[ f_k(x^iN,g,n_3,\ldots,n_{k+1}) = h_{L(n_3,\ldots,n_{k+1})+i} \]
for all $i \in \Z$ and $(n_3,\ldots,n_{k+1}) \in N_q$. Thus
\[ \sum_{(n_3,\ldots,n_{k+1}) \in N_q} f_k(N,g,n_3,\ldots,n_{k+1}) = \sum_{j \in \Z/d\Z} |L^{-1}(j)| h_j \]
and
\[ \sum_{(n_3,\ldots,n_{k+1}) \in N_q} f_k(xN,g,n_3,\ldots,n_{k+1}) = \sum_{j \in \Z/d\Z} |L^{-1}(j)| h_{j+1}, \]
and so \eqref{eqn:frN} becomes
\[ \sum_{j \in \Z/d\Z} |L^{-1}(j)| h_{j+1} \leq \sum_{j \in \Z/d\Z} |L^{-1}(j)| h_j. \]
Moreover, in \cref{ssec:details} we will show that
\begin{equation} \label{eqn:adjacent}
|L^{-1}(j)| h_{j+1} = |L^{-1}(j+1)| h_j
\end{equation}
for each $j \in \Z/d\Z$. \cref{prop:gallagher} then follows from the following Lemma:
\begin{lemma}
Let $d \in \mathbb{N}$ and for each $j \in \mathbb{Z}/d\mathbb{Z}$, let $r_j$ and $h_j$ be non-negative integers such that $r_jh_{j+1} = r_{j+1}h_j$ for each $j$. Then
\[ \sum_{j \in \mathbb{Z}/d\mathbb{Z}} r_j h_{j+1} \leq \sum_{j \in \mathbb{Z}/d\mathbb{Z}} r_j h_j. \]
\end{lemma}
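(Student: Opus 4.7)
The plan is to rewrite the desired inequality as a cyclic sum of terms each of which is nonnegative, exploiting the hypothesis $r_j h_{j+1} = r_{j+1} h_j$ in two different ways. The key algebraic identity I would use is the elementary expansion
\[
(r_j - r_{j+1})(h_j - h_{j+1}) = r_j h_j + r_{j+1} h_{j+1} - r_j h_{j+1} - r_{j+1} h_j.
\]
Summing this over $j \in \Z/d\Z$, re-indexing $\sum_j r_{j+1} h_{j+1} = \sum_j r_j h_j$, and applying the hypothesis to identify $\sum_j r_{j+1} h_j$ with $\sum_j r_j h_{j+1}$, I expect the right-hand side to collapse to
\[
\sum_{j \in \Z/d\Z} (r_j - r_{j+1})(h_j - h_{j+1}) \;=\; 2 \sum_{j \in \Z/d\Z} r_j h_j \;-\; 2 \sum_{j \in \Z/d\Z} r_j h_{j+1}.
\]
So the problem reduces to showing that each summand on the left is nonnegative.

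For the termwise inequality, fix $j$ and assume without loss of generality that $r_j \ge r_{j+1}$; the goal is then to show $h_j \ge h_{j+1}$. If $r_j = 0$ then $r_{j+1} = 0$ and both factors vanish. Otherwise $r_j > 0$, and the hypothesis rearranges to $h_{j+1} = (r_{j+1}/r_j) h_j$, which is at most $h_j$ because $r_{j+1} \le r_j$ and $h_j \ge 0$. Thus $(r_j - r_{j+1})$ and $(h_j - h_{j+1})$ have the same sign (or one is zero), and the product is nonnegative.

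Combining the two steps gives $\sum_j r_j h_{j+1} \le \sum_j r_j h_j$, as required. The argument is entirely elementary and I do not foresee any genuine obstacle; the only subtlety is handling the possibility that some $r_j$ vanish, which is why I split the termwise check into the $r_j=0$ and $r_j>0$ cases to avoid dividing by zero.
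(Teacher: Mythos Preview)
Your proof is correct and follows essentially the same approach as the paper's: both expand $\sum_j (r_j - r_{j+1})(h_j - h_{j+1})$, use the hypothesis together with cyclic reindexing to collapse the sum to $2\bigl(\sum_j r_j h_j - \sum_j r_j h_{j+1}\bigr)$, and observe termwise that $(r_j - r_{j+1})(h_j - h_{j+1}) \ge 0$ since the hypothesis forces $r_j - r_{j+1}$ and $h_j - h_{j+1}$ to have the same sign. Your treatment of the case $r_j = 0$ is slightly more explicit than the paper's, but the arguments are otherwise identical.
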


\begin{proof}
For a fixed $j \in \mathbb{Z}/d\mathbb{Z}$, since $r_jh_{j+1} = r_{j+1}h_j$, we have either $r_j \leq r_{j+1}$ and $h_j \leq h_{j+1}$, or $r_j \geq r_{j+1}$ and $h_j \geq h_{j+1}$. This implies that
\begin{align*}
0 &\leq \sum_{j \in \mathbb{Z}/d\mathbb{Z}} (r_j-r_{j+1})(h_j-h_{j+1}) \\
&= \sum_{j \in \mathbb{Z}/d\mathbb{Z}} r_jh_j - \sum_{j \in \mathbb{Z}/d\mathbb{Z}} r_jh_{j+1} - \sum_{j \in \mathbb{Z}/d\mathbb{Z}} r_{j+1}h_j + \sum_{j \in \mathbb{Z}/d\mathbb{Z}} r_{j+1}h_{j+1} \\
&= 2 \left( \sum_{j \in \mathbb{Z}/d\mathbb{Z}} r_jh_j - \sum_{j \in \mathbb{Z}/d\mathbb{Z}} r_jh_{j+1} \right),
\end{align*}
as required.
\end{proof}

\subsection{Combinatorial structure of \texorpdfstring{$\mathcal{N}_k(G)$}{Nk(G)}} \label{ssec:defns}

Here we clarify the notation used in \cref{ssec:sketch}. In particular, we define the subsets $N_q \in N^{k-1}$, and for a given $q \in \{1,\ldots,p\}$, the number $d \in \N$ and the function $L: N_q \to \Z/d\Z$ used in \cref{ssec:sketch}.

A key fact used in the argument is the following commutator identity:

\begin{lemma} \label{lem:mainid}
For any $z,y \in G$ and $n_3,\ldots,n_{k+1} \in N$,
\[ [zy,g,n_3,\ldots,n_{k+1}] = \left[z,g,n_3^{\alpha_3^{-1}},\ldots,n_{k+1}^{\alpha_{k+1}^{-1}}\right]^{\alpha_{k+2}} [y,g,n_3,\ldots,n_{k+1}], \]
where $\alpha_i = y \prod_{j=2}^{i-2} [y,g,n_3,\ldots,n_j]$ for $3 \leq i \leq k+2$ (for the avoidance of doubt, $\alpha_3=y$ and $\alpha_4=y[y,g]$).
\end{lemma}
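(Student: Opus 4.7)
The plan is to prove the identity by induction on $k$, relying on the two basic commutator identities
\[
[ab,c] = [a,c]^b\,[b,c]
\qquad\text{and}\qquad
[a^b,c] = [a,c^{b^{-1}}]^b,
\]
which hold in any group. Both identities are standard and follow by direct expansion.

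For the base case $k=1$ (one ``extra'' variable $n_3$), I would first apply $[ab,c]=[a,c]^b[b,c]$ to obtain $[zy,g]=[z,g]^y[y,g]$, and then apply the same identity again, together with $[a^b,c]=[a,c^{b^{-1}}]^b$, to expand $[zy,g,n_3]$. The result rearranges to
\[
[zy,g,n_3] = [z,g,n_3^{y^{-1}}]^{y[y,g]}\,[y,g,n_3],
\]
which matches the statement once one notes that $\alpha_3=y$ and $\alpha_4=y[y,g]$.

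For the inductive step, assume the formula with tail $(n_3,\ldots,n_{k+1})$ and write
\[
[zy,g,n_3,\ldots,n_{k+1}] = A\cdot B,
\qquad
A = [z,g,n_3^{\alpha_3^{-1}},\ldots,n_{k+1}^{\alpha_{k+1}^{-1}}]^{\alpha_{k+2}},
\quad
B = [y,g,n_3,\ldots,n_{k+1}].
\]
Taking the commutator of $A\cdot B$ with $n_{k+2}$ and applying $[ab,c]=[a,c]^b[b,c]$ gives
$[zy,g,n_3,\ldots,n_{k+2}] = [A,n_{k+2}]^B\,[B,n_{k+2}]$. The second factor is exactly $[y,g,n_3,\ldots,n_{k+2}]$. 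For the first factor I would use $[a^b,c]=[a,c^{b^{-1}}]^b$ to pull the conjugation by $\alpha_{k+2}$ outside, producing an extra conjugate $n_{k+2}^{\alpha_{k+2}^{-1}}$ inside the commutator and a total outer conjugation by $\alpha_{k+2}\cdot B$.

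The only thing that needs to be checked is that $\alpha_{k+2}\cdot B$ equals the next $\alpha_{k+3}$ in the sequence, but this is immediate from the defining formula $\alpha_i = y\prod_{j=2}^{i-2}[y,g,n_3,\ldots,n_j]$: appending $j=k+1$ to the product turns $\alpha_{k+2}$ into $\alpha_{k+3}$. Once this bookkeeping lines up, the induction closes. The main obstacle is purely notational, namely keeping track of which $\alpha_i$ appears where; the algebra itself is just two iterated applications of the product and conjugation identities above.
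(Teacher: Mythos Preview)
Your approach is essentially the same as the paper's: induction on $k$ using $[ab,c]=[a,c]^b[b,c]$ together with $[a^b,c]=[a,c^{b^{-1}}]^b$, with the recursion $\alpha_{k+2}\cdot B=\alpha_{k+3}$ closing the induction exactly as you describe. The only slip is indexing: in the lemma's convention the case $k=1$ has \emph{no} $n_i$'s (the list $n_3,\ldots,n_{k+1}$ is empty) and reduces directly to $[zy,g]=[z,g]^y[y,g]$ with only $\alpha_3=y$ present; what you call the base case is actually $k=2$, but since you verify both, the argument is complete.
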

\begin{proof}
We proceed by induction on $k$. We make repeated use of the commutator identity
\begin{equation} \label{eqn:abc}
[ab,c] = [a,c]^b [b,c].
\end{equation}

For the base case $k = 1$, we use \eqref{eqn:abc} with $a = z$, $b = y$ and $c = g$, by noting that $\alpha_3 = y$.

For $k \geq 2$,  the inductive hypothesis gives
\[ [zy,g,n_3,\ldots,n_{k+1}] = \left[ \overbrace{\left[z,g,n_3^{\alpha_3^{-1}},\ldots,n_k^{\alpha_k^{-1}}\right]^{\alpha_{k+1}}}^{a_{k+1}} \overbrace{[y,g,n_3,\ldots,n_k]}^{b_{k+1}}, n_{k+1} \right]. \]
Since $\alpha_{k+2} = \alpha_{k+1} b_{k+1}$, the result follows by applying \eqref{eqn:abc} with $a = a_{k+1}$, $b = b_{k+1}$ and $c = n_{k+1}$.
\end{proof}

This Lemma motivates the following construction. Let $\Gamma$ be a directed labelled multigraph (that is, a directed labelled graph in which loops and multiple edges are allowed) with vertex set
\[ V(\Gamma) = N^{k-1} \]
and edge set
\[ E(\Gamma) = \left\{ \left(n_3,\ldots,n_{k+1}\right) \xrightarrow{y} \left(n_3^{\alpha_3^{-1}},\ldots,n_{k+1}^{\alpha_{k+1}^{-1}}\right) \:\middle|\: y \in xN, n_3,\ldots,n_{k+1} \in N, [y,g,n_3,\ldots,n_{k+1}] = 1 \right\}, \]
where $\alpha_3 = \alpha_3(y), \alpha_4 = \alpha_4(y),\alpha_5 = \alpha_5(y,n_3),\ldots,\alpha_{k+1} = \alpha_{k+1}(y,n_3,\ldots,n_{k-2}) \in G$ are as in \cref{lem:mainid}.

Now write $\Gamma$ as a union of its connected components, 
\[ \Gamma = \Gamma_1 \sqcup \cdots \sqcup \Gamma_p, \]
and write $N_q$ for $V(\Gamma_q)$, where $1 \leq q \leq p$. This defines a partition
\[ N^{k-1} = N_1 \sqcup \cdots \sqcup N_p, \]
as above. Fix $q \in \{1,\ldots,p\}$. In what follows, a walk in $\Gamma_q$ is not required to follow directions of the edges, but does have a choice of orientation associated with it.

\begin{definition} \begin{enumerate}
\item For a walk $\gamma$ of length $s_++s_-$ from $v \in N_q$ to $w \in N_q$, define the \emph{directed length} $\ell(\gamma)$ of $\gamma$ to be $s_+-s_-$, where $s_+$ (respectively $s_-$) is the number of edges in $\gamma$ with directions coincident with (respectively opposite to) the direction of $\gamma$. Note that given any walk $\gamma$ in $\Gamma_q$ we have $\ell(\gamma^{-1})=-\ell(\gamma)$.

\item Define the \emph{period} of $\Gamma_q$ to be
\[ d = \gcd (\{o\} \cup \{ |\ell(c)| \mid c \text{ is a closed walk in } \Gamma_q \}), \]
where $o$ is the order of $xN$ in $G/N$.
\item Choose a base vertex $v_q$ of $\Gamma_q$. For any vertex $v \in \Gamma_q$, define the \emph{level} of $v$ to be 
\[ L(v) = \ell(\gamma_v)+d\mathbb{Z} \in \mathbb{Z}/d\mathbb{Z} \]
where $\gamma_v$ is a walk in $\Gamma_q$ from $v_q$ to $v$. Note that if $\gamma_v$, $\tilde\gamma_v$ are two such walks, then $c = \gamma_v^{-1} \tilde\gamma_v$ is a \emph{closed} walk, and so $d$ divides $\ell(c) = -\ell(\gamma_v) + \ell(\tilde\gamma_v)$ by the choice of $d$. Thus $L(v)$ does not depend on the choice of $\gamma_v$.
\end{enumerate}
\end{definition}
\begin{remark*}
The set $\mathcal{W}(\Gamma_q)$ of walks in $\Gamma_q$ forms a group under concatenation, with inverses given by changing orientation, and in this setting $\ell: \mathcal{W}(\Gamma_q) \to \Z$ is a homomorphism.
\end{remark*}

\subsection{Completing the proof of \texorpdfstring{\cref{prop:gallagher}}{Proposition \ref{prop:gallagher}}} \label{ssec:details}

We now prove \eqref{eqn:period} and \eqref{eqn:adjacent} from \cref{ssec:sketch}, which will complete the proof of \cref{prop:gallagher}.

The last part of the following Lemma shows \eqref{eqn:period} is true:
\begin{lemma} \begin{enumerate}
\item \label{item:lperiod1} For any walk $\gamma$ from $(n_3,\ldots,n_{k+1}) \in N_q$ to $(\tilde{n}_3,\ldots,\tilde{n}_{k+1}) \in N_q$ and any $i \in \Z$, we have
\[ f_k(x^iN,g,n_3,\ldots,n_{k+1}) = f_k(x^{i-\ell(\gamma)}N,g,\tilde{n}_3,\ldots,\tilde{n}_{k+1}). \]
\item \label{item:lperiod2} For any $(n_3,\ldots,n_{k+1}) \in N_q$ and $i \in \Z$, we have
\[ f_k(x^iN,g,n_3,\ldots,n_{k+1}) = f_k(x^{i-d}N,g,n_3,\ldots,n_{k+1}). \]
\item \label{item:lperiod3} For any $(n_3,\ldots,n_{k+1}),(\tilde{n}_3,\ldots,\tilde{n}_{k+1}) \in N_q$ and $i,\tilde{i} \in \Z$, if
\[ L(n_3,\ldots,n_{k+1})+i = L(\tilde{n}_3,\ldots,\tilde{n}_{k+1})+\tilde{i} \quad \text{(in $\Z/d\Z$)}, \]
then
\[ f_k(x^iN,g,n_3,\ldots,n_{k+1}) = f_k(x^{\tilde{i}}N,g,\tilde{n}_3,\ldots,\tilde{n}_{k+1}). \]
\end{enumerate}
\end{lemma}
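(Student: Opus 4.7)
The plan is to prove (i) first---this is the substantive part---and then derive (ii) and (iii) formally. For (i), I would induct on the length of $\gamma$, reducing to the case of a single edge, and for the single-edge case I would construct an explicit right-translation bijection using \cref{lem:mainid}.

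More concretely, suppose first that $\gamma$ consists of a single forward edge $v = (n_3,\ldots,n_{k+1}) \xrightarrow{y} w = (\tilde n_3,\ldots,\tilde n_{k+1})$, so $y\in xN$, $[y,g,n_3,\ldots,n_{k+1}]=1$ and $\tilde n_i = n_i^{\alpha_i^{-1}}$. \cref{lem:mainid} then yields, for every $z\in G$,
\[
[zy,g,n_3,\ldots,n_{k+1}] = [z,g,\tilde n_3,\ldots,\tilde n_{k+1}]^{\alpha_{k+2}}[y,g,n_3,\ldots,n_{k+1}] = [z,g,\tilde n_3,\ldots,\tilde n_{k+1}]^{\alpha_{k+2}}.
\]
Because the $\alpha_j$ depend only on $y$ and the $n_i$ (not on $z$), the map $z\mapsto zy$ is a well-defined bijection from $\{z\in x^{i-1}N : [z,g,\tilde n_3,\ldots,\tilde n_{k+1}]=1\}$ onto $\{z'\in x^iN : [z',g,n_3,\ldots,n_{k+1}]=1\}$ (using normality of $N$ to see $x^{i-1}N\cdot y = x^iN$). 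This gives (i) for $\ell(\gamma)=1$. A backward edge is just the reverse of a forward edge and composes the bijection with its inverse; so the general case follows by induction using $\ell(\gamma_1\gamma_2)=\ell(\gamma_1)+\ell(\gamma_2)$.

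For (ii), note the trivial identity $f_k(x^iN,g,v)=f_k(x^{i+o}N,g,v)$, since $x^oN=N$. By a standard connectedness argument any closed walk in $\Gamma_q$ is conjugate (by a walk from $v$ to its basepoint) to a closed walk based at $v$ of the same directed length; hence the subgroup of $\Z$ generated by the directed lengths of all closed walks in $\Gamma_q$ equals the subgroup generated by the directed lengths of closed walks based at $v$. Applying (i) to each such closed walk shows that $f_k(x^iN,g,v)$ is invariant under shifts of $i$ by any element of $\{o\}\cup\{\ell(c):c\text{ closed at }v\}$, i.e.\ by any element of $d\Z$. For (iii), pick any walk $\gamma$ in $\Gamma_q$ from $(n_3,\ldots,n_{k+1})$ to $(\tilde n_3,\ldots,\tilde n_{k+1})$; by the definition of $L$, $L(\tilde n_3,\ldots,\tilde n_{k+1}) \equiv L(n_3,\ldots,n_{k+1})+\ell(\gamma)\pmod d$, so the hypothesis becomes $\tilde i\equiv i-\ell(\gamma)\pmod d$. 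Combining (i) and (ii) then gives $f_k(x^iN,g,n_3,\ldots,n_{k+1}) = f_k(x^{i-\ell(\gamma)}N,g,\tilde n_3,\ldots,\tilde n_{k+1}) = f_k(x^{\tilde i}N,g,\tilde n_3,\ldots,\tilde n_{k+1})$.

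The main obstacle is the careful bookkeeping in the single-edge case of (i): one must verify that the $\alpha_j$ appearing in \cref{lem:mainid} are exactly the ones used to define the target vertex $w$ of the edge, so that the identity gives an \emph{unconditional} bijection $z\mapsto zy$ that does not depend on $z$. Everything else is either induction or elementary group-theoretic manipulation.
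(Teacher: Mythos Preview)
Your proposal is correct and follows essentially the same approach as the paper: for (i) you induct on the length of $\gamma$ and in the base case use \cref{lem:mainid} to produce the bijection $z\mapsto zy$ between the two solution sets, for (ii) you conjugate closed walks to the chosen vertex and combine (i) with the $o$-periodicity, and for (iii) you combine (i) and (ii) exactly as the paper does. The only cosmetic difference is that the paper spells out (ii) via an explicit B\'ezout-style decomposition $d = mo + \sum m_j\ell(c_j)$, whereas you phrase the same idea in terms of the subgroup of $\Z$ generated by the shift-invariances.
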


\begin{proof} \begin{enumerate}

\item We proceed by induction on the length of $\gamma$. For the base case (when $\gamma$ is an edge), note that by \cref{lem:mainid}, an edge $(n_3,\ldots,n_{k+1}) \xrightarrow{y} (\tilde{n}_3,\ldots,\tilde{n}_{k+1})$ in $\Gamma_q$ defines a bijection between elements $zy \in x^iN$ with $[zy,g,n_3,\ldots,n_{k+1}] = 1$ and elements $z \in x^{i-1}N$ with $[z,g,\tilde{n}_3,\ldots,\tilde{n}_{k+1}] = 1$, and hence
\begin{equation} \label{eqn:gequal}
f_k((xN)^i,g,n_3,\ldots,n_{k+1}) = f_k((xN)^{i-1},g,\tilde{n}_3,\ldots,\tilde{n}_{k+1}).
\end{equation}
For the inductive step (when the length of $\gamma$ is at least $2$), note that we can write $\gamma = \tilde\gamma e^{\varepsilon}$ for some $e \in E(\Gamma_q)$, $\varepsilon \in \{ \pm 1 \}$, and a walk $\tilde\gamma$ that is strictly shorter than $\gamma$. Thus, applying the inductive hypothesis to $\tilde\gamma$ and \eqref{eqn:gequal} to $e$ yields the result.

\item Fix a vertex $v = (n_3,\ldots,n_{k+1}) \in N_q$ and $i \in \Z$. By definition of $d$, there exist closed walks $c_1,\ldots,c_r$ and integers $m,m_1,\ldots,m_r \in \Z$ such that
\[ d = mo + m_1 \ell(c_1) + \cdots + m_r \ell(c_r). \]
Note that we may transform the closed walks $c_j$ to ones that start and end at $v$: indeed, if $\gamma_j$ is a walk from $v$ to the starting (and ending) vertex of $c_j$, then $\tilde{c}_j = \gamma_j c_j \gamma_j^{-1}$ is a closed walk starting and ending at $v$, and $\ell(\tilde{c}_j) = \ell(c_j)$. This allows us to construct a closed walk
\[ \tilde{c} = \tilde{c}_1^{m_1} \cdots \tilde{c}_r^{m_r} \]
and we have
\[ \ell(\tilde{c}) = m_1\ell(\tilde{c}_1) + \cdots + m_r\ell(\tilde{c}_r) = d-mo. \]
Substituting $\gamma = \tilde{c}$ to part \eqref{item:lperiod1} yields
\[ f(x^iN,g,n_3,\ldots,n_{k+1}) = f_k(x^{i-d+mo}N,g,n_3,\ldots,n_{k+1}). \]
But since $o$ is the order of $xN$ in $G/N$, we get $x^{i-d+mo}N = (x^{i-d}N)((xN)^o)^m = x^{i-d}N$, which gives the result.

\item Let $\gamma$ (respectively $\tilde\gamma$) be a walk in $\Gamma_q$ from the base vertex $v_q$ to $(n_3,\ldots,n_{k+1})$ (respectively $(\tilde{n}_3,\ldots,\tilde{n}_{k+1})$). By definition of level, we have
\[ \ell(\gamma^{-1}\tilde\gamma) + d\Z = -\ell(\gamma)+\ell(\tilde\gamma) +d\Z = -L(n_3,\ldots,n_{k+1}) + L(\tilde{n}_3,\ldots,\tilde{n}_{k+1}) = i-\tilde{i} + d\Z, \]
and so $\ell(\gamma^{-1}\tilde\gamma) = i-\tilde{i}+md$ for some $m \in \Z$. By part \eqref{item:lperiod1}, we have
\[ f_k(x^iN,g,n_3,\ldots,n_{k+1}) = f_k(x^{i-(i-\tilde{i}+md)}N,g,\tilde{n}_3,\ldots,\tilde{n}_{k+1}) = f_k(x^{\tilde{i}-md}N,g,\tilde{n}_3,\ldots,\tilde{n}_{k+1}), \]
and so $|m|$ applications of part \eqref{item:lperiod2} to the right hand side gives the result.
\qedhere

\end{enumerate}
\end{proof}

Finally, we prove \eqref{eqn:adjacent}:

\begin{lemma}
For each $j \in \Z/d\Z$, we have $|L^{-1}(j)|h_{j+1} = |L^{-1}(j+1)|h_j$.
\end{lemma}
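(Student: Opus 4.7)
The plan is to count the edges of $\Gamma_q$ incident to level-$j$ and level-$(j{+}1)$ vertices in two ways, using the fact that every edge $v\xrightarrow{y}\tilde v$ of $\Gamma_q$ increases the level by exactly one. By the proof of part (i) the directed length of such an edge is $+1$, so $L(\tilde v)=L(v)+1$, and moreover the number of edges going out of a fixed vertex $v\in N_q$ is $\#\{y\in xN:[y,g,v]=1\}=f_k(xN,g,v)=h_{L(v)+1}$. Summing over $v\in L^{-1}(j)$ therefore gives
\[
\#\{\text{edges starting at level }j\}=|L^{-1}(j)|\,h_{j+1}.
\]

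Next I will count the same edges by their endpoints. The key observation is that, given $y\in xN$ and $\tilde v=(\tilde n_3,\ldots,\tilde n_{k+1})\in N^{k-1}$, the formulae defining $\Gamma$ can be inverted to reconstruct the source $v=(n_3,\ldots,n_{k+1})$: $\alpha_3$ and $\alpha_4$ depend only on $y$, so $n_3=\tilde n_3^{\alpha_3}$ and $n_4=\tilde n_4^{\alpha_4}$ are forced, and then $n_5,\ldots,n_{k+1}$ are determined recursively since each $\alpha_i$ depends only on $y$ and the earlier $n_j$'s. Thus $v$ is a well-defined function of $(y,\tilde v)$.

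The decisive step is to rewrite the edge condition $[y,g,v]=1$ in terms of $\tilde v$. Applying \cref{lem:mainid} with $z=y^{-1}$ gives
\[
1=[y^{-1}y,g,n_3,\ldots,n_{k+1}]=[y^{-1},g,\tilde n_3,\ldots,\tilde n_{k+1}]^{\alpha_{k+2}}[y,g,n_3,\ldots,n_{k+1}],
\]
so $[y,g,v]=1$ if and only if $[y^{-1},g,\tilde v]=1$. Since $N\lhd G$, the inversion map $y\mapsto y^{-1}$ is a bijection $xN\to x^{-1}N$. Hence for fixed $\tilde v$ with $L(\tilde v)=j+1$ the number of edges of $\Gamma_q$ terminating at $\tilde v$ equals
\[
\#\{y\in xN:[y^{-1},g,\tilde v]=1\}=f_k(x^{-1}N,g,\tilde v)=h_{L(\tilde v)-1}=h_j.
\]

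Summing over $\tilde v\in L^{-1}(j+1)$ gives $\#\{\text{edges ending at level }j{+}1\}=|L^{-1}(j+1)|\,h_j$, and since every edge starting at level $j$ ends at level $j+1$ (and vice versa), the two counts coincide, yielding $|L^{-1}(j)|\,h_{j+1}=|L^{-1}(j+1)|\,h_j$. The only subtle point — which is the main obstacle — is spotting that \cref{lem:mainid} applied with $z=y^{-1}$ produces an involution swapping the two sides of the identity; once that is noticed, the rest is elementary edge counting.
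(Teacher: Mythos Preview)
Your argument is correct and essentially the same as the paper's: what you phrase as double-counting the edges of $\Gamma_q$ going from level $j$ to level $j{+}1$ is exactly the paper's bijection $\theta\colon (y,g,v)\mapsto(y^{-1},g,\tilde v)$ between the sets $\mathcal{A}$ and $\mathcal{B}$, and the key step in both is the application of \cref{lem:mainid} with $z=y^{-1}$ together with the recursive inversion showing that $(y,\tilde v)$ determines $v$. The only cosmetic difference is that you count edges by source and target, whereas the paper packages the same count as an explicit bijection.
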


\begin{proof}
We will give a bijection between the set
\[ \mathcal{A} = \bigsqcup_{\substack{(n_3,\ldots,n_{k+1}) \in N_q \\ L(n_3,\ldots,n_{k+1}) = j}} \mathcal{N}_k(G) \cap \left( xN \times \{g\} \times \{n_3\} \times \ldots \times \{n_{k+1}\} \right) \]
and the set
\[ \mathcal{B} = \bigsqcup_{\substack{(\tilde{n}_3,\ldots,\tilde{n}_{k+1}) \in N_q \\ L(\tilde{n}_3,\ldots,\tilde{n}_{k+1}) = j+1}} \mathcal{N}_k(G) \cap \left( x^{-1}N \times \{g\} \times \{\tilde{n}_3\} \times \ldots \times \{\tilde{n}_{k+1}\} \right). \]
Since $\mathcal{A}$ is a disjoint union of $|L^{-1}(j)|$ sets, each of cardinality $h_{j+1}$, and $\mathcal{B}$ is a disjoint union of $|L^{-1}(j+1)|$ sets, each of cardinality $h_j$, this will imply the result.

Now consider
\begin{align*}
\theta: xN \times \{g\} \times N^{k-1} &\to x^{-1}N \times \{g\} \times N^{k-1}, \\
(y,g,n_3,\ldots,n_{k+1}) &\mapsto \left(y^{-1},g,n_3^{\alpha_3^{-1}},\ldots,n_{k+1}^{\alpha_{k+1}^{-1}}\right),
\end{align*}
where $\alpha_3 = \alpha_3(y)$, $\alpha_4 = \alpha_4(y)$, $\alpha_5 = \alpha_5(y,n_3)$, \ldots, $\alpha_{k+1} = \alpha_{k+1}(y,n_3,\ldots,n_{k-1})$ are as in \cref{lem:mainid}. First, we claim that $\theta$ is a bijection. Indeed, for each $i$, the element $\alpha_i$ does not depend on $n_i,\ldots,n_{k+1}$, and so it follows (by induction on $k-i$) that the restriction of $\theta$ given by
\[ \theta_i : \{y\} \times \{g\} \times \{n_3\} \times \cdots \times \{n_{i+1}\} \times N^{k-i} \to \left\{y^{-1}\right\} \times \left\{g\right\} \times \left\{n_3^{\alpha_3^{-1}}\right\} \times \cdots \times \left\{n_{i+1}^{\alpha_{i+1}^{-1}}\right\} \times N^{k-i}\]
is a bijection for each $y \in xN$ and $(n_3,\ldots,n_{i+1}) \in N^{i-1}$. In particular,
\[ \theta_1: \{y\} \times \{g\} \times N^{k-1} \to \left\{y^{-1}\right\} \times \{g\} \times N^{k-1} \]
is a bijection for each $y \in xN$, and hence $\theta$ is a bijection as well.

It is now enough to show that $\theta(\mathcal{A}) = \mathcal{B}$. By substituting $z = y^{-1}$ in \cref{lem:mainid}, it follows that $[y,g,n_3,\ldots,n_{k+1}] = 1$ if and only if $\left[y^{-1},g,n_3^{\alpha_3^{-1}},\ldots,n_{k+1}^{\alpha_{k+1}^{-1}}\right] = 1$, and hence that
\[ \theta \left( \mathcal{N}_k(G) \cap (xN \times \{g\} \times N^{k-1}) \right) = \mathcal{N}_k(G) \cap (x^{-1}N \times \{g\} \times N^{k-1}). \]
Furthermore, for an arbitrary edge $\left(n_3,\ldots,n_{k+1}\right) \xrightarrow{y} \left(\tilde{n}_3,\ldots,\tilde{n}_{k+1}\right)$ in $\Gamma$ (note that $\left(\tilde{n}_3,\ldots,\tilde{n}_{k+1}\right) = \left(n_3^{\alpha_3^{-1}},\ldots,n_{k+1}^{\alpha_{k+1}^{-1}}\right)$ in this case), its endpoints are in the same connected component of $\Gamma$, that is, $\left(n_3,\ldots,n_{k+1}\right) \in N_q$ if and only if $\left(\tilde{n}_3,\ldots,\tilde{n}_{k+1}\right) \in N_q$. Moreover, if it is the case that $\left(n_3,\ldots,n_{k+1}\right),\left(\tilde{n}_3,\ldots,\tilde{n}_{k+1}\right) \in N_q$ then by definition of level we have
\[
L\left(\tilde{n}_3,\ldots,\tilde{n}_{k+1}\right)=L\left(n_3,\ldots,n_{k+1}\right)+1.
\]
Hence $\theta(\mathcal{A}) = \mathcal{B}$, as required.
\end{proof}

\subsection{Residually finite groups} \label{ssec:res.fin}

Finally, we prove \cref{thm:res.fin.}. We follow the argument of Antol\'in and the first and fourth authors in \cite[Theorem 1.3]{amv}. The proof uses the following result.

\begin{theorem}[Erfanian, Rezaei, Lescot {\cite[Theorem 5.1]{erl}}] \label{thm:gap}
Let $G$ be a finite group that is not $k$-step nilpotent. Then
\[
\dc^k(G) \leq \frac{2^{k+2}-3}{2^{k+2}}.
\]
\end{theorem}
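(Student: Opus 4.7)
The plan is to prove the bound by induction on $k$, with the main inductive step powered by the estimate
\[
\dc^k(G) \leq \tfrac12\bigl(1 + \dc^{k-1}(G/Z(G))\bigr),
\]
together with the classical fact that $G$ is $k$-step nilpotent if and only if $G/Z(G)$ is $(k-1)$-step nilpotent.

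To prove the estimate, I would condition on the random element $y = [x_1,\ldots,x_k]$. Given such $y$, the conditional probability that $[y,x_{k+1}] = 1$ (over uniform $x_{k+1} \in G$) equals $|C_G(y)|/|G|$. When $y \in Z(G)$ this probability equals $1$; when $y \notin Z(G)$ the conjugacy class of $y$ has at least two elements, so $|C_G(y)|/|G| \leq \tfrac12$. Writing $p = \Prob\bigl([x_1,\ldots,x_k] \in Z(G)\bigr)$ and taking expectations gives
\[
\dc^k(G) = \E\!\left[\frac{|C_G([x_1,\ldots,x_k])|}{|G|}\right] \leq p \cdot 1 + (1-p) \cdot \tfrac12 = \tfrac12(1+p).
\]
Since $[x_1,\ldots,x_k] \in Z(G)$ is exactly the condition that this commutator is trivial in $G/Z(G)$, we have $p = \dc^{k-1}(G/Z(G))$, establishing the estimate.

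For the base case $k = 1$, the non-abelian hypothesis rules out $G/Z(G)$ being cyclic (if $G/Z(G)$ were cyclic, generated by $gZ(G)$, then every element of $G$ would commute with $g$ modulo $Z(G)$, forcing $G$ abelian), so $|G/Z(G)| \geq 4$ and hence $\dc^0(G/Z(G)) = 1/|G/Z(G)| \leq 1/4 = 1 - 3/2^2$. The estimate then yields $\dc(G) \leq (1 + \tfrac14)/2 = 5/8$, recovering Gustafson's theorem. For the inductive step ($k \geq 2$), if $G$ is not $k$-step nilpotent then $G/Z(G)$ is not $(k-1)$-step nilpotent, and the inductive hypothesis gives $\dc^{k-1}(G/Z(G)) \leq 1 - 3/2^{k+1}$. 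Substituting into the estimate produces $\dc^k(G) \leq \tfrac12\bigl(1 + 1 - 3/2^{k+1}\bigr) = 1 - 3/2^{k+2}$, as required. There is no real obstacle here: the entire argument hinges on the trivial dichotomy $|C_G(y)|/|G| \in \{1\} \cup [0,\tfrac12]$, and the inductive passage to $G/Z(G)$ is clean because the set $\{y \in Z(G)\}$ is precisely what the degree of nilpotence of the central quotient measures at the previous level.
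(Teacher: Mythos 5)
Your proof is correct. A small note on context: the paper does not prove this statement itself but cites it to Erfanian--Rezaei--Lescot, so there is no internal proof to compare against. Your argument is the standard one: the conditional probability that $x_{k+1}$ centralizes $y=[x_1,\ldots,x_k]$ is $|C_G(y)|/|G|$, which is $1$ if $y\in Z(G)$ and at most $\tfrac12$ otherwise (since $C_G(y)$ is then a proper subgroup); the probability that $[x_1,\ldots,x_k]\in Z(G)$ is exactly $\dc^{k-1}(G/Z(G))$ because each $x_iZ(G)$ is uniform in $G/Z(G)$ and the condition is precisely that the $k$-fold commutator vanishes in the quotient; and the equivalence ``$G$ is $k$-step nilpotent $\iff$ $G/Z(G)$ is $(k-1)$-step nilpotent'' (via the upper central series) lets the induction close. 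The base case is Gustafson's $5/8$ bound, derived the same way. This is essentially the argument in the cited reference, so there is nothing genuinely different here, just a correct reconstruction of it.
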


\begin{proof}[Proof of \cref{thm:res.fin.}]
We describe a recursive process outputting a (possibly finite) sequence $G_0>G_1>G_2>\ldots$ of members of $\mathcal N$ as follows. We may assume without loss of generality that $G\in\mathcal N$ and set $G_0=G$. Once $G_{i-1}$ is defined, if it is $k$-step nilpotent we terminate the process. If not, there exist $x_1,\ldots,x_{k+1}\in G_{i-1}$ such that $[x_1,\ldots,x_{k+1}]\ne1$. Since $\bigcap_{N\in\mathcal N}N=\{1\}$, there therefore exists $N_i\in\mathcal N$ such that $[x_1,\ldots,x_{k+1}]\notin N_i$. Set $G_i=G_{i-1}\cap N_i$, noting that $G_i\in\mathcal N$ by the finite-intersection property, and that $G_{i-1}/G_i$ is not $k$-step nilpotent.

Writing $\gamma_k=(2^{k+2}-3)/2^{k+2}$, it follows from \cref{thm:gap} that $\dc^k(G_{i-1}/G_i)\le\gamma_k$ for every $i$, and hence from \cref{thm:gallagher} that for every $n$ with $G_n$ defined we have
\[
\dc^k(G/G_n)\le\prod_{i=1}^n\dc^k(G_{i-1}/G_i)\le\gamma_k^n.
\]
The process must therefore terminate for some $n\le\log\alpha/\log\gamma_k$, meaning that $G_n$ is a $k$-step nilpotent subgroup of finite index in $G$.
\end{proof}

\section{Dependence on rank}
\label{sec:rank}
Here we give an example, for any odd prime $p$ and any $k \in \N$, of a family $\left(G^{(n)}\right)_{n=1}^\infty$ of finite $p$-groups that are $(k+1)$-step nilpotent but not $k$-step nilpotent, and such that the centre $Z\left(G^{(n)}\right)$ of $G^{(n)}$ has order $p$. Moreover, we will show that any $k$-step nilpotent subgroup $K^{(n)}$ of $G^{(n)}$ has index at least $p^n$. As $G^{(n)}/Z\left(G^{(n)}\right)$ is $k$-step nilpotent, this will show that the bound on the index of a $k$-step nilpotent subgroup of $G$ in \cref{cor:fbn} has to depend on the rank of $G$. By \cref{prop:converse}, the same can be said about the bound in \cref{thm:dc_k}.

Furthermore, note that this example will show that the index of a $k$-step nilpotent subgroup in \cref{thm:res.fin.} cannot be bounded in terms of $k$ and $\alpha$. To see this, it is enough to apply \cref{prop:converse} and to note that if $\dc^k\left(G^{(n)}\right) \geq \alpha$ then also $\dc^k\left(G^{(n)}/N\right) \geq \alpha$ for any normal subgroup $N \lhd G^{(n)}$.

Throughout this section, we fix an odd prime $p$, and denote the finite field of cardinality $p$ by $\Fp$. For $r,s \in \N$, we denote by $\mat{r}{s}$ the $\Fp$-vector space of $r \times s$ matrices with entries in $\Fp$.

\subsection{The group \texorpdfstring{$G_k(n,r,s)$}{Gk(n,r,s)}}
\label{ssec:Gknrs}

Let $k \in \Z_{\geq 0}$ and let $n,r,s \in \N$. We consider the following subgroup of $GL_{r+kn+s}(\Fp)$ consisting of block upper unitriangular matrices:
\[
G_k(n,r,s) = \left\{ \begin{pmatrix}
I_r & A_0 & A_1 & \cdots & A_{k-1} & C \\
& I_n & D_{1,1} & \cdots & D_{1,k-1} & B_1 \\
&& I_n & \ddots & \vdots & \vdots \\
&&& \ddots & D_{k-1,k-1} & B_{k-1} \\
&&&& I_n & B_k \\
&&&&& I_s
\end{pmatrix} \:\middle|\: \begin{array}{@{}l@{}} A_i \in \mat{r}{n} \\ \qquad \text{for } 0 \leq i \leq k-1, \\ B_i \in \mat{n}{s} \\ \qquad \text{for } 1 \leq i \leq k, \\ C \in \mat{r}{s}, \\ D_{i,j} \in \mat{n}{n} \\ \qquad \text{for } 1 \leq i \leq j \leq k-1 \end{array} \right\}.
\]
For a matrix $X \in G_k(n,r,s)$, we will write $A_j(X)$, $B_i(X)$, $C(X)$ and $D_{i,j}(X)$ for the corresponding blocks of $X$. For a subset $U \subseteq G_k(n,r,s)$ we will similarly write $A_j(U) = \{ A_j(X) \mid X \in U \}$, etc.

Note that for $k = 0$, the group $G_0(n,r,s) = \left\{ \begin{pmatrix} I_r & C \\ 0 & I_s \end{pmatrix} \:\middle|\: C \in \mat{r}{s} \right\}$ is just the elementary abelian group of order $p^{rs}$. For $k = r = s = 1$, the group $G_1(n,1,1)$ is the extraspecial group of exponent $p$. It is well-known that such a group is $2$-step nilpotent, has centre of order $p$, but no abelian subgroups of index $< p^n$ (see, for instance, \cref{lem:Gcentralseries} and \cite[Theorem 1.8]{verardi}). We aim to generalise this example; in particular, for the sequence $\left(G^{(n)}\right)$ of groups described above we will take $G^{(n)} = G_k(n,1,1)$. We thus need to show that $G_k(n,1,1)$ is $(k+1)$-step nilpotent, has centre of order $p$ and has no $k$-step nilpotent subgroups of index $< p^n$.

The first two of these statements follow from the following Lemma, whose proof is easy and left as an exercise for the reader.

\begin{lemma} \label{lem:Gcentralseries}
Let $k\in\Z_{\geq 0}$ and $n,r,s \in \N$. Let $G = G_k(n,r,s)$, and let $G = \gamma_1(G) \geq \gamma_2(G) \geq \cdots$ and $\{1\} = Z_0(G) \leq Z_1(G) \leq \cdots$ be the lower and upper central series of $G$, respectively. Then
\begin{align*}
\gamma_{\ell+1}(G) = Z_{k+1-\ell}(G) = \{ X \in G \mid\ &A_j(X) = 0 \text{ for } j < \ell,\ B_i(X) = 0 \text{ for } k-i < \ell, \\ &D_{i,j}(X) = 0 \text{ for } j-i < \ell \}
\end{align*}
for all $\ell \in \{ 0,\ldots,k \}$. \qed
\end{lemma}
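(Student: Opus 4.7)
The plan is to identify the right-hand side $H_\ell$ of the displayed equation, together with the convention $H_{k+1} \coloneqq \{I\}$, as simultaneously the $(\ell+1)$-st term of the lower central series and the $(k+1-\ell)$-th term of the upper central series. To that end, I will exhibit $\{H_\ell\}_{\ell=0}^{k+1}$ as a central series of $G = G_k(n,r,s)$ of length $k+1$ realising both extremes. It is convenient to parametrise off-diagonal blocks of elements of $G$ by their \emph{distance}: the block in block-row $p$ and block-column $q$ (with $0 \leq p < q \leq k+1$) has distance $q-p$. Unpacking the three conditions defining $H_\ell$, one finds that $H_\ell$ is exactly the set of $X \in G$ whose off-diagonal blocks at distance $\leq \ell$ all vanish.

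Writing $X = I + X^\circ$ with $X^\circ$ strictly block upper triangular, and letting $L_d$ denote the $\Fp$-subspace of block matrices whose nonzero blocks have distance $\geq d$, the block multiplication rule immediately gives $L_d L_{d'} \subseteq L_{d+d'}$, and hence $H_\ell = I + L_{\ell+1}$ is a normal subgroup of $G$. The identity $XY - YX = PQ - QP$ (for $X = I+P \in H_a$, $Y = I+Q \in H_b$) combined with $PQ, QP \in L_{a+b+2}$ gives the key commutator bound
\[
[H_a, H_b] \subseteq I + L_{a+b+2} = H_{a+b+1}.
\]
In particular $\{H_\ell\}$ is a central series of length $k+1$, and the standard minimality of the lower central series (and maximality of the upper one) among all central series of $G$ yield the two ``easy'' inclusions $\gamma_{\ell+1}(G) \subseteq H_\ell$ and $H_\ell \subseteq Z_{k+1-\ell}(G)$.

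The matching reverse inclusions rest on the elementary commutator identity
\[
[e_{p,q}(M), e_{q,r}(N)] = e_{p,r}(MN) \qquad (p < q < r),
\]
where $e_{p,q}(M)$ is $I$ together with an extra block $M$ at position $(p,q)$. Taking $q = p+1$ and $r = p+\ell+1$, an upward induction on $\ell$ combined with linear-combination arguments in the abelian quotient $H_\ell/H_{\ell+1}$ (any target block is an $\Fp$-linear combination of products $MN$) shows that every elementary matrix of distance exactly $\ell+1$ lies in $\gamma_{\ell+1}(G)$; a downward induction on $\ell$ (with base case $H_{k+1} = \{I\}$) then yields $H_\ell \subseteq \gamma_{\ell+1}(G)$. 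For the opposite inclusion $Z_{k+1-\ell}(G) \subseteq H_\ell$, I argue by contrapositive and downward induction on $\ell$: if $z \in G$ has a nonzero block at minimal distance $d \leq \ell \leq k$ at some position $(p,q)$, then $d \leq k$ forces either $q \leq k$ (in which case I take $g \coloneqq e_{q,q+1}(N)$) or $q = k+1$ with $p \geq 1$ (in which case I take $g \coloneqq e_{p-1,p}(N)$), with $N$ chosen so that $z_{p,q} N$ (respectively $N z_{p,q}$) is nonzero. A quick block calculation then shows that $[g,z]$ has a nonzero block of distance $d+1 \leq \ell+1$, contradicting the inductive hypothesis $[g,z] \in Z_{k-\ell}(G) \subseteq H_{\ell+1}$.

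The main obstacle will be this last step --- specifically, verifying that the chosen $g$ really does produce a nonzero block of the claimed distance in $[g,z]$, without its being cancelled by contributions from other nonzero blocks of $z$. The necessary check is short: since $g$ has only a single nonzero off-diagonal block, only the minimal-distance block of $z$ contributes to the corresponding entry of $PQ - QP$, so no such cancellation can occur, and the block-product $z_{p,q} N$ (or $N z_{p,q}$) is easily rendered nonzero by a standard rank-based choice of $N$. With this in hand, all four inclusions collapse into equalities, giving the stated description $\gamma_{\ell+1}(G) = Z_{k+1-\ell}(G) = H_\ell$ for every $\ell \in \{0,\ldots,k\}$.
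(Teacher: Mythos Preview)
Your proof is correct. The paper does not actually supply a proof of this lemma: it is stated with a \qed\ and the remark that the proof ``is easy and left as an exercise for the reader.'' So there is nothing to compare against beyond checking that your argument really does the job, and it does.

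A few confirmations on the points you flagged as potential obstacles. Your reindexing of the blocks by their ``distance'' $q-p$ is exactly right: the conditions $j<\ell$, $k-i<\ell$, $j-i<\ell$ translate uniformly into ``distance $\le \ell$'', so $H_\ell = I + L_{\ell+1}$ as you claim. For the lower-central inclusion $H_\ell\subseteq\gamma_{\ell+1}(G)$, the linear-combination step works because $e_{p,r}(M_1)e_{p,r}(M_2)=e_{p,r}(M_1+M_2)$ exactly (no correction term, since $E_{p,r}(M_1)E_{p,r}(M_2)=0$), and any matrix in $\Mat_{d_p\times d_r}(\Fp)$ is an $\Fp$-sum of products $MN$ with $M\in\Mat_{d_p\times d_{p+1}}(\Fp)$, $N\in\Mat_{d_{p+1}\times d_r}(\Fp)$ (rank-one decomposition suffices, since $d_{p+1}\ge1$). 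For the upper-central inclusion, your ``no cancellation'' concern is handled by the computation $[g,z]-I = g^{-1}z^{-1}(EP-PE)$ with $E$ the single off-diagonal block of $g$: modulo $L_{d+2}$ this is exactly $EP-PE$, and since $E$ has a unique nonzero block the $(p,q+1)$ entry (respectively $(p-1,q)$ entry) of $EP-PE$ is precisely $-z_{p,q}N$ (respectively $Nz_{p,q}$), with no contribution from any other block of $z$. Choosing $N$ with a column (respectively row) not orthogonal to a nonzero row (respectively column) of $z_{p,q}$ makes this nonzero.

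This is the standard argument for the central series of block-unitriangular groups; there is no meaningfully different route the paper could have had in mind.
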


We are therefore left to show that $G_k(n,1,1)$ has no $k$-step nilpotent subgroups of index $< p^n$. In \cref{ssec:no-large-sgps} we will prove the following proposition, which is slightly more general.

\begin{prop} \label{prop:no-large-sgps}
Let $k,n,r,s \in \N$. If a subgroup $K \leq G_k(n,r,s)$ has index $< p^n$, then $K$ is not $k$-step nilpotent.
\end{prop}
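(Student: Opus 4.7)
The plan is to find $k+1$ elements of $K$ whose iterated commutator is non-trivial. I proceed in three steps.

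First, pass to $\bar G := G/\gamma_2(G)$ and let $\bar K$ be the image of $K$. By \cref{lem:Gcentralseries}, $\bar G$ decomposes as $V_0 \oplus V_1 \oplus \cdots \oplus V_k$ where $V_0 \cong \mat{r}{n}$, $V_i \cong \mat{n}{n}$ for $1 \leq i \leq k-1$, and $V_k \cong \mat{n}{s}$. The hypothesis $[G:K] < p^n$ gives $\codim_{\bar G}(\bar K) < n$, and standard dimension inequalities yield $\codim_{V_i}(\bar K \cap V_i) < n$ for every $i$. An induction on $k$ via the identity $[I + E_{(a,b)}(A),\, I + E_{(b,c)}(B)] = I + E_{(a,c)}(AB)$ (valid for distinct $a<b<c$) shows that, for lifts $x_i$ in the natural copy of $V_i$ inside $G$,
\[
[x_0,\, x_1,\, \ldots,\, x_k] = I + E_{(0,k+1)}\bigl(A_0(x_0)\, D_{1,1}(x_1) \cdots D_{k-1,k-1}(x_{k-1})\, B_k(x_k)\bigr),
\]
which, since $\gamma_{k+1}(G) \subseteq Z(G)$, depends only on images in $\bar G$; the iterated commutator therefore descends to a $(k+1)$-linear map $\mu : \bar G^{k+1} \to \gamma_{k+1}(G) \cong \mat{r}{s}$.

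Second, each $\bar K \cap V_i$ for $1 \leq i \leq k-1$ is a subspace of $\mat{n}{n}$ of dimension greater than $n^2 - n$. By Flanders' theorem on spaces of matrices of bounded rank, every such subspace contains an invertible matrix; fix $M_i^* \in \bar K \cap V_i$ invertible and set $T := M_1^* M_2^* \cdots M_{k-1}^* \in GL_n(\Fp)$.

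Third, consider the bilinear form $B : \bar G \times \bar G \to \mat{r}{s}$ given by $B(v, v') := \mu(v,\, M_1^*,\, \ldots,\, M_{k-1}^*,\, v')$. Since each $M_i^* \in \bar K$, to finish it suffices to exhibit $v, v' \in \bar K$ with $B(v, v') \neq 0$. Direct commutator computation shows that $B$ restricted to $V_0 \times V_k$ is the pairing $(u, w) \mapsto u T w$, while $B|_{V_0 \times V_0}$ and $B|_{V_k \times V_k}$ vanish identically (the pairing $B|_{V_k \times V_0}$ contributes only when $k$ is small). Because $T$ is invertible, any isotropic subspace $W \leq V_0 \oplus V_k$ for $B$ must have its projections $U \leq \mat{r}{n}$ and $V \leq \mat{n}{s}$ satisfy $U T V = 0$, and an elementary dimension argument using this constraint bounds $\dim W \leq n \max(r, s)$, i.e., $\codim_{V_0 \oplus V_k}(W) \geq n \min(r, s) \geq n$. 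On the other hand,
\[
\dim\bigl(\bar K \cap (V_0 \oplus V_k)\bigr) \geq \dim \bar K + \dim(V_0 \oplus V_k) - \dim \bar G > n(r + s) - n,
\]
so $\codim_{V_0 \oplus V_k}\bigl(\bar K \cap (V_0 \oplus V_k)\bigr) < n$. Therefore $\bar K \cap (V_0 \oplus V_k)$ cannot be isotropic for $B$, and $B$ takes a non-zero value on $\bar K \times \bar K$, yielding the desired non-trivial $(k+1)$-fold commutator.

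The hardest part will be the computation in the third step: the precise form of $B|_{V_0 \oplus V_k}$ changes with $k$ (antisymmetric for $k=1$, symmetric for $k=2$, and ``one-sided'' with $B|_{V_k \times V_0} = 0$ for $k \geq 3$), so a short case analysis is needed. In every case, however, invertibility of $T$ rules out ``graph-type'' isotropic subspaces strictly larger than $V_0$ or $V_k$, which is what underpins the uniform codimension bound $n \min(r,s) \geq n$ that powers the argument.
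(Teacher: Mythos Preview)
Your approach is correct and genuinely different from the paper's. The paper proceeds by induction on $k$: it proves an auxiliary lemma (\cref{lem:qcorank}) showing that for $K \leq G_k(n,n,s)$ of quasi-corank $q < n$, the subspace $C(\gamma_{k+1}(K)) \leq \mat{n}{s}$ has codimension at most $q$; the inductive step uses the projection $\pi: G_k(n,n,s) \to G_{k-1}(n,n,s)$ together with the Flanders/de~Seguins~Pazzis result to find an invertible matrix in $A_0(K \cap \ker\pi)$. The proposition is then deduced from this lemma by one further pairing argument. Your route is more direct: you exhibit a single explicit $(k+1)$-fold commutator $[v, M_1^*, \ldots, M_{k-1}^*, v']$ that is nontrivial, by fixing invertible elements $M_i^*$ in the middle slots and reducing to a bilinear problem on $V_0 \oplus V_k$. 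Both arguments use the same matrix-space input (a subspace of $\mat{n}{n}$ of dimension $> n^2-n$ contains an invertible matrix); the paper's approach yields the stronger quantitative conclusion that $C(\gamma_{k+1}(K))$ is large, while yours is more transparent and avoids the induction entirely.

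There is one genuine slip in your third step: the assertion that an isotropic subspace $W$ has projections satisfying $UTV = 0$ is false for $k \in \{1,2\}$. For example, when $k = r = s = 1$ the diagonal $W = \{(u, u^T) : u \in \Fp^n\}$ is isotropic for the antisymmetric form $B$, yet has $U = V = \Fp^n$. The fix is painless and in fact removes the need for any case analysis: for \emph{every} $k \geq 1$, taking $(u,0) \in W_0 := W \cap V_0$ and $(u',w') \in W$ in $B$ gives $uTw' = 0$, so one always has $W_0 \cdot T \cdot V = 0$ (with $V = \operatorname{proj}_{V_k}(W)$). Letting $\rho$ be the dimension of the row span of $W_0$ inside $\Fp^n$, this forces $\dim W_0 \leq r\rho$ and $\dim V \leq s(n-\rho)$, and since $\dim W = \dim W_0 + \dim V$ one gets $\dim W \leq r\rho + s(n-\rho) \leq n\max(r,s)$ uniformly in $k$. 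With this correction your argument goes through cleanly.
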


\begin{remark}
Note that in the case $r = s = 1$, the bound in \cref{prop:no-large-sgps} is sharp: indeed, $\{ X \in G_k(n,1,1) \mid A_0(X) = 0 \}$ is a subgroup of $G_k(n,1,1)$ of index $p^n$, and it is not hard to verify that it is $k$-step nilpotent.
\end{remark}

\subsection{Non-existence of large \texorpdfstring{$k$}{k}-step nilpotent subgroups}
\label{ssec:no-large-sgps}

Let $G = G_k(n,r,s)$. By \cref{lem:Gcentralseries}, the abelianisation map $\rho: G \to G^{ab}$ is given by mapping a matrix in $G$ to the set of its superdiagonal blocks:
\begin{align*}
\rho: G &\to \mat{r}{n} \oplus \left( \bigoplus_{i=1}^{k-1} \mat{n}{n} \right) \oplus \mat{n}{s} \cong \Fp^{n(r+(k-1)n+s)}, \\
X &\mapsto (A_0(X),D_{1,1}(X),\ldots,D_{k-1,k-1}(X),B_k(X))
\end{align*}
for $k \geq 1$, and $\rho: G \to \mat{r}{s}, X \mapsto C(X)$ for $k = 0$.
For a subgroup $K \leq G$, we define the \emph{quasi-rank} (respectively \emph{quasi-corank}) of $K$ in $G$ to be the dimension (respectively codimension) of $\rho(K)$ in the $\F_p$-vector space $G^{ab}$. Note that if $K$ has quasi-corank $q$ then we have $[G:K\gamma_2(G)] = p^q$. We thus aim to show that the quasi-corank of a $k$-step nilpotent subgroup of $G_k(n,r,s)$ will be at least $n$.

The inductive proof of \cref{prop:no-large-sgps} is based on the surjective homomorphism $\pi = \pi_{k,n,r,s}$, obtained by taking the bottom-right $(kn+s) \times (kn+s)$ submatrix:
\begin{align*}
\pi: G_k(n,r,s) &\to G_{k-1}(n,n,s), \\
 \begin{pmatrix}
I_r & A_0 & A_1 & \cdots & A_{k-1} & C \\
& I_n & D_{1,1} & \cdots & D_{1,k-1} & B_1 \\
&& I_n & \ddots & \vdots & \vdots \\
&&& \ddots & D_{k-1,k-1} & B_{k-1} \\
&&&& I_n & B_k \\
&&&&& I_s
\end{pmatrix} &\mapsto  \begin{pmatrix}
I_n & D_{1,1} & \cdots & D_{1,k-1} & B_1 \\
& I_n & \ddots & \vdots & \vdots \\
&& \ddots & D_{k-1,k-1} & B_{k-1} \\
&&& I_n & B_k \\
&&&& I_s
\end{pmatrix}.
\end{align*}
Note that if $K \leq G_k(n,r,s)$ has quasi-corank $q$, then $\pi(K) \leq G_{k-1}(n,n,s)$ will have quasi-corank at most $q$.

For any $X \in \gamma_k(G_k(n,r,s))$ we have $B_2(X) = \cdots = B_k(X) = 0$ by \cref{lem:Gcentralseries}, and for any $Y \in \ker\pi_{k,n,r,s}$ we have $B_1(Y) = \cdots = B_k(Y) = 0$ by the definition of $\pi_{k,n,r,s}$. Therefore,
\begin{equation} \label{eqn:CXY}
\begin{aligned}
C([X,Y]) &= \left( C(Y)+A_0(X)B_1(Y)+\cdots+A_{k-1}(X)B_k(Y)+C(X) \right)\\ &-\left( C(X)+A_0(Y)B_1(X)+\cdots+A_{k-1}(Y)B_k(X)+C(Y) \right) \\ &= -A_0(Y)B_1(X) \qquad \text{for all } X \in \gamma_k(G_k(n,r,s)) \text{ and } Y \in \ker\pi_{k,n,r,s}.
\end{aligned}
\end{equation}
Thus, in order to prove \cref{prop:no-large-sgps}, given a subgroup $K \leq G_k(n,r,s)$ of quasi-corank $< n$ we need to find matrices $X \in \gamma_k(K)$ and $Y \in K\cap\ker\pi$ such that $A_0(Y)B_1(X) \neq 0$.

We first prove a slightly stronger version of \cref{prop:no-large-sgps} under the additional assumption that $r = n$.

\begin{lemma} \label{lem:qcorank}
Let $k \in \Z_{\geq 0}$ and $n,s \in \N$. Let $K$ be a subgroup of $G_k(n,n,s)$ of quasi-corank $q < n$. Then the subspace
\[
C(\gamma_{k+1}(K)) = \{ C(X) \mid X \in \gamma_{k+1}(K) \} \leq \mat{n}{s}
\]
has codimension at most $q$.
\end{lemma}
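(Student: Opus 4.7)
The plan is to argue by induction on $k$. The base case $k = 0$ is immediate: $G_0(n,n,s)$ is abelian and canonically isomorphic to $\mat{n}{s}$ via the $C$-block, so $C(\gamma_1(K)) = K$ has codimension exactly $q$. For the inductive step with $k \geq 1$, I would first apply the inductive hypothesis to the image $\pi(K) \leq G_{k-1}(n,n,s)$, where $\pi = \pi_{k,n,n,s}$ is as defined in \cref{ssec:Gknrs}. Since the quasi-corank of $\pi(K)$ is at most $q$ and $q < n$, it follows that $C(\gamma_k(\pi(K)))$ has codimension at most $q$ in $\mat{n}{s}$. Using that $\pi$ is surjective (so $\pi(\gamma_k(K)) = \gamma_k(\pi(K))$) and that $C(\pi(X)) = B_1(X)$ by construction of $\pi$, this translates into the intermediate statement that $B_1(\gamma_k(K))$ has codimension at most $q$ in $\mat{n}{s}$.

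Next I would establish the commutator formula
\[
C([X, Y]) = A_{k-1}(X) B_k(Y) - A_0(Y) B_1(X) \qquad \text{for } X \in \gamma_k(K),\ Y \in K,
\]
by a direct block-matrix computation analogous to \eqref{eqn:CXY}. The computation is simplified by the observation---read off from \cref{lem:Gcentralseries}---that an element $X \in \gamma_k(G_k(n,n,s))$ has only the blocks $A_{k-1}(X)$, $B_1(X)$, and $C(X)$ possibly non-zero, so that in the product $[X, Y]$ most of the potential cross-terms vanish automatically. Since $C(\gamma_{k+1}(K))$ is the $\Fp$-span of expressions of this form as $(X, Y)$ ranges over $\gamma_k(K) \times K$, the goal becomes to show that this span has codimension at most $q$ in $\mat{n}{s}$.

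The main idea is then to exhibit an element $Y_0 \in K$ with $A_0(Y_0)$ invertible and $B_k(Y_0) = 0$, since plugging such a $Y_0$ into the commutator formula collapses it to the single term $-A_0(Y_0) B_1(X)$. To this end, set $\Lambda := (A_0, B_k)(K) \leq \mat{n}{n} \oplus \mat{n}{s}$, which is the image of $\rho(K)$ under the projection that forgets the $D$-blocks and so has codimension at most $q$. The subspace $\Lambda_0 := \Lambda \cap (\mat{n}{n} \oplus \{0\}) \cong A_0(K \cap \ker B_k)$ fits into a short exact sequence
\[
0 \to \Lambda_0 \to \Lambda \to B_k(K) \to 0,
\]
which yields $\codim_{\mat{n}{n}} \Lambda_0 \leq \codim \Lambda \leq q < n$. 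Since any linear subspace of $\mat{n}{n}$ of codimension strictly less than $n$ must contain an invertible matrix (this is the Flanders--Dieudonn\'e bound on spaces of singular matrices), $\Lambda_0$ contains an invertible element, and so there exists $Y_0 \in K \cap \ker B_k$ with $a := A_0(Y_0)$ invertible.

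Plugging this $Y_0$ into the commutator formula gives $C([X, Y_0]) = -a \cdot B_1(X)$ for every $X \in \gamma_k(K)$, whence $C(\gamma_{k+1}(K)) \supseteq -a \cdot B_1(\gamma_k(K))$; since $a$ is invertible, left-multiplication by $a$ is a linear automorphism of $\mat{n}{s}$, so this subspace has the same codimension as $B_1(\gamma_k(K))$, namely at most $q$. The main obstacle I foresee is securing the invertible element $a \in \Lambda_0$: the lemma's hypothesis $q < n$ is used precisely through the Flanders--Dieudonn\'e bound here, and without this input one does not see how to extract a single-term commutator identity from the two-term bilinear formula.
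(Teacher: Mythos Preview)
Your proof is correct and follows essentially the same inductive scheme as the paper: reduce via $\pi$ to bound $\codim B_1(\gamma_k(K))$, locate an element $Y_0\in K$ with $A_0(Y_0)$ invertible using the dimension bound on spaces of singular $n\times n$ matrices, and then read off the conclusion from a commutator identity.

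The one point of divergence is the source of $Y_0$. The paper takes $Y_0\in K\cap\ker\pi$, so that \emph{all} $B_i(Y_0)$ vanish and the already-recorded identity \eqref{eqn:CXY} applies directly; it then cites \cite[Corollary~13]{dsp} for the (stronger) fact that $A_0(K\cap\ker\pi)$ is spanned by invertibles. You instead take $Y_0\in K\cap\ker B_k$, which forces you to derive the two-term formula $C([X,Y])=A_{k-1}(X)B_k(Y)-A_0(Y)B_1(X)$ for general $Y\in K$, but in exchange your short-exact-sequence argument bounding $\codim\Lambda_0$ is entirely transparent and the Flanders--Dieudonn\'e bound (every subspace of $\mat{n}{n}$ of codimension $<n$ meets $GL_n$) is exactly what is needed, no more. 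Both routes are equivalent in strength; yours is arguably a touch more self-contained.
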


\begin{proof}
By induction on $k$. For $k = 0$, we have $G_0(n,n,s) \cong \mat{n}{s}$ and $C(\gamma_1(K)) = C(K) \cong K$, hence the result is clear.

Now suppose $k \geq 1$, and let $\pi = \pi_{k,n,n,s}$. As $K$ has quasi-corank $q$ in $G_k(n,n,s)$, the subgroup $\pi(K) \leq G_{k-1}(n,n,s)$ will have quasi-corank at most $q$. Therefore, by induction hypothesis, the subspace
\[
C(\gamma_k(\pi(K))) = B_1(\gamma_k(K)) = \{ B_1(X) \mid X \in \gamma_k(K) \}  \leq \mat{n}{s}
\]
will have codimension at most $q$.

Moreover, it is clear by the definition of the quasi-corank that the subspace
\[
A_0(K\cap\ker\pi) := \{ A_0(X) \mid X \in K\cap\ker\pi \} \leq \mat{n}{n}
\]
will have codimension at most $q$, so in particular
\[
\dim A_0(K\cap\ker\pi) \geq n^2-q > n^2-n.
\]
It follows by \cite[Corollary 13]{dsp} that $A_0(K\cap\ker\pi)$ is generated by matrices of rank $n$, so in particular there exists a matrix $Y \in K\cap\ker\pi$ such that $A_0(Y)$ is invertible. But now, as $C(\gamma_{k+1}(K))$ contains $C([X,Y]) = -A_0(Y)B_1(X)$ for any $X \in \gamma_k(K)$ (see \eqref{eqn:CXY}), it follows that
\[
\codim C(\gamma_{k+1}(K)) \leq \codim B_1(\gamma_k(K)) \leq q,
\]
as required.
\end{proof}

\begin{proof}[Proof of \cref{prop:no-large-sgps}]
Let $q$ be the quasi-corank of $K$ in $G = G_k(n,r,s)$. Then we have
\[
p^q = [G:K\gamma_2(G)] \leq [G:K] < p^n
\]
and so $q < n$. Consider again the map $\pi = \pi_{k,n,r,s}$, and let $q_1$ be the quasi-corank of $\pi(K)$ in $G_{k-1}(n,n,s)$. By \cref{lem:qcorank}, the subspace $B_1(\gamma_k(K)) = C(\gamma_k(\pi(K)))$ will have codimension at most $q_1$ in $\mat{n}{s}$. By the rank-nullity theorem, the subspace $A_0(K\cap\ker\pi) \leq \mat{r}{n}$ will have codimension $q-q_1 =: q_2$.

Now consider the projections $\tau_1: \mat{r}{n} \to \Fp^n$ and $\tau_2: \mat{n}{s} \to \Fp^n$ of matrices to the top row and to the right column, respectively. By \eqref{eqn:CXY}, for any $X \in \gamma_k(K)$ and $Y \in K\cap\ker\pi$, the top right entry of $[X,Y]$ will be $-\langle \tau_1(A_0(Y)), \tau_2(B_1(X)) \rangle$, where $\langle -,- \rangle$ is the standard bilinear form on $\Fp^n$. Furthermore, it is clear that $T_1 := \tau_1(A_0(K\cap\ker\pi))$ and $T_2 := \tau_2(B_1(\gamma_k(K)))$ will have codimensions (in $\Fp^n$) at most $q_1$ and at most $q_2$, respectively. Thus, as $q < n$, we have
\[
\dim T_1 + \dim T_2 \geq (n-q_1) + (n-q_2) = 2n-q > n,
\]
and so, as $\langle -,- \rangle$ is non-degenerate,
\[
\dim T_1 > n-\dim T_2 = \dim T_2^\perp.
\]
This implies that $T_1 \nleq T_2^\perp$, that is, $\langle T_1,T_2 \rangle \neq 0$. Therefore, there exist matrices $X \in \gamma_k(K)$ and $Y \in K\cap\ker\pi$ such that the top right entry of $[X,Y]$ is non-zero, so $K$ is not $k$-step nilpotent.
\end{proof}

\subsection{Qualitative conclusions}
\label{ssec:dirlim}

Apart from the rank-dependence of quantitative conclusions of \cref{thm:dc_k,cor:fbn}, we may use the groups $G_k(n,1,1)$ to give counterexamples to qualitative conclusions as well for groups that are not finitely generated. In particular, we will prove the following result.

\begin{prop} \label{prop:needfg}
For any $k \geq 1$ and any odd prime $p$, there exists a group $G$ and a finite normal subgroup $H \lhd G$ of order $p$ such that $G/H$ is $k$-step nilpotent, but $G$ is not virtually $k$-step nilpotent.
\end{prop}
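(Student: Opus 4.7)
The plan is to build $G$ as a central product of infinitely many copies of the groups $G^{(n)} := G_k(n,1,1)$ from Section \ref{ssec:Gknrs}, amalgamating all their centres into one subgroup of order $p$. Concretely, let $z_n$ generate the centre $Z^{(n)} = Z(G^{(n)})$, which has order $p$ by Lemma \ref{lem:Gcentralseries}, for each $n \in \N$, form the restricted direct product $P := \bigoplus_{n=1}^\infty G^{(n)}$, set
\[
K := \langle z_n z_{n+1}^{-1} : n \geq 1 \rangle \leq Z(P),
\]
and put $G := P/K$ with $H$ the image of $\langle z_1 \rangle$ in $G$. A short combinatorial check shows that $K \cap G^{(n)} = \{1\}$ inside $P$ for every $n$: an element of $K$ supported on a single coordinate forces all exponent-differences $a_m - a_{m-1}$ to vanish, hence $a_m = 0$ throughout. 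Therefore each $G^{(n)}$ embeds into $G$, and since $z_n \equiv z_{n+1} \pmod K$ the images of all $z_n$ in $G$ coincide, yielding $|H| = p$.

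Next I would verify that $G/H$ is $k$-step nilpotent. The subgroup $\langle K, z_1\rangle \leq P$ contains $z_1$ and successively $z_2 = z_1(z_1 z_2^{-1})^{-1}$, $z_3$, \ldots, so $\langle K, z_1\rangle = \bigoplus_n Z^{(n)}$, giving
\[
G/H \;\cong\; P\Big/\bigoplus_n Z^{(n)} \;\cong\; \bigoplus_n G^{(n)}/Z^{(n)}.
\]
By Lemma \ref{lem:Gcentralseries} each $G^{(n)}$ is $(k+1)$-step nilpotent with $\gamma_{k+1}(G^{(n)}) = Z^{(n)}$, so each summand $G^{(n)}/Z^{(n)}$ is $k$-step nilpotent, and a direct sum of $k$-step nilpotent groups is again $k$-step nilpotent (commutators involve only finitely many coordinates at a time).

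To show that $G$ is not virtually $k$-step nilpotent, suppose for contradiction that $N \leq G$ is a $k$-step nilpotent subgroup of finite index $m$. Identifying each $G^{(n)}$ with its image in $G$, the intersection $N \cap G^{(n)}$ has index at most $m$ in $G^{(n)}$. Choosing any $n$ with $p^n > m$, Proposition \ref{prop:no-large-sgps} says that $N \cap G^{(n)}$ cannot be $k$-step nilpotent, contradicting the fact that it is a subgroup of $N$.

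The main point requiring care is the choice of $K$: it must collapse all of the centres $Z^{(n)}$ into a single copy of $\Z/p\Z$ without identifying any non-central elements, so that Proposition \ref{prop:no-large-sgps} still applies to each embedded $G^{(n)}$. The relations $z_n z_{n+1}^{-1}$ are essentially the minimal such choice, and verifying injectivity of $G^{(n)} \hookrightarrow G$ is the only genuinely combinatorial ingredient; everything else is formal bookkeeping with the quantitative bound of Section \ref{ssec:no-large-sgps}.
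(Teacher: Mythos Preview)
Your proof is correct and uses the same building blocks as the paper (the groups $G_k(n,1,1)$, Lemma~\ref{lem:Gcentralseries}, and Proposition~\ref{prop:no-large-sgps}), but the assembly is genuinely different. The paper realises $G$ as the \emph{direct limit} $\varinjlim G_k(n)$, embedding each $G_k(n)$ into $G_k(n+1)$ by padding with zeros so that the centres are literally nested and hence automatically coincide in the limit; any finite collection of elements then sits inside a single $G_k(n)$, which makes both the nilpotency of $G/H$ and the application of Proposition~\ref{prop:no-large-sgps} immediate. Your construction is instead a \emph{central product}: you keep all the $G^{(n)}$ disjoint except for amalgamating the centres via the relations $z_n = z_{n+1}$. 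This requires the small extra verification that $K \cap G^{(n)} = \{1\}$ (which you carry out correctly), but in exchange the quotient $G/H \cong \bigoplus_n G^{(n)}/Z^{(n)}$ is completely explicit. Both routes reduce the non-virtual-nilpotence to Proposition~\ref{prop:no-large-sgps} in exactly the same way, so neither is materially shorter; the direct-limit version avoids the combinatorial check on $K$, while yours avoids having to exhibit the chain of embeddings $G_k(n) \hookrightarrow G_k(n+1)$.
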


Throughout the rest of this section, fix an odd prime $p$ and, for each $n \geq 1$, let $G_k(n) := G_k(n,1,1)$ be the finite groups defined in \cref{ssec:Gknrs}.

\begin{proof}[Proof of \cref{prop:needfg}]
Our proof relies on the observation that $G_k(n)$ can be seen as a subgroup of $G_k(n+1)$. In particular, it is easy to see that
\[
\overline{G_k(n)} = \left\{ \begin{pmatrix}
1 & \overline{\mathbf{a}_0}^T & \overline{\mathbf{a}_1}^T & \cdots & \overline{\mathbf{a}_{k-1}}^T & c \\
& I_{n+1} & \overline{D_{1,1}} & \cdots & \overline{D_{1,k-1}} & \overline{\mathbf{b}_1} \\
&& I_{n+1} & \ddots & \vdots & \vdots \\
&&& \ddots & \overline{D_{k-1,k-1}} & \overline{\mathbf{b}_{k-1}} \\
&&&& I_{n+1} & \overline{\mathbf{b}_k} \\
&&&&& 1
\end{pmatrix} \:\middle|\: \begin{array}{@{}l@{}} \mathbf{a}_i,\mathbf{b}_i \in \mathbb{F}_p^n \\ \qquad \text{for } 0 \leq i \leq k-1, \\ c \in \mathbb{F}_p, \\ D_{i,j} \in \mat{n}{n} \\ \qquad \text{for } 1 \leq i \leq j \leq k-1 \end{array} \right\}
\]
is a subgroup of $G_k(n+1)$ isomorphic to $G_k(n)$, where given any $A \in \mat{n}{n}$ and $\mathbf{a} \in \mathbb{F}_p^n$ we define $\overline{A} = \begin{pmatrix} A & \mathbf{0} \\ \mathbf{0}^T & 0 \end{pmatrix} \in \mat{(n+1)}{(n+1)}$ and $\overline{\mathbf{a}} = \begin{pmatrix} \mathbf{a} \\ 0 \end{pmatrix} \in \mathbb{F}_p^{n+1}$. This allows us to define the direct limit
\[
G_k := \varinjlim G_k(n).
\]

Given $n \geq 1$, let $f_n: G_k(n) \to G_k$ be the canonical inclusion. It follows from \cref{lem:Gcentralseries} that for each $n$, the image $f_n(Z(G_k(n)))$ of the centre of $G_k(n)$ in $G_k$ is the same subgroup ($H_k$, say) of $G_k$ of order $p$. Hence we have $Z(G_k) = \bigcup_{n \geq 1} f_n(Z(G_k(n))) = H_k$, and in particular, $H_k$ is normal in $G_k$. We will show that $G = G_k$ and $H = H_k$ satisfy the conclusion of the Proposition.

To show that $G_k/H_k$ is $k$-step nilpotent, let $g_0,\ldots,g_k \in G_k$ be arbitrary elements. Then, for any sufficiently large $n$ and all $i \in \{0,\ldots,k\}$ we have $g_i = f_n(h_i)$ for some $h_i \in G_k(n)$, and so
\[
[h_0,\ldots,h_k] \in \gamma_{k+1}(G_k(n)) \leq Z(G_k(n))
\]
as $G_k(n)$ is $(k+1)$-step nilpotent. In particular,
\[
[g_0,\ldots,g_k] = f_n([h_0,\ldots,h_k]) \in f_n(Z(G_k(n))) = H_k,
\]
and so $G_k/H_k$ is $k$-step nilpotent, as required.

Finally, to show that $G_k$ is not virtually $k$-step nilpotent, let $N \leq G_k$ be a subgroup of index $m < \infty$. Let $n \in \Z_{\geq 1}$ be such that $p^n > m$, and note that $[G_k(n) : f_n^{-1}(N)] \leq [G_k : N] = m < p^n$. Thus, by \cref{prop:no-large-sgps}, $f_n^{-1}(N)$ cannot be $k$-step nilpotent. But as $f_n$ is injective, $f_n^{-1}(N)$ is isomorphic to a subgroup of $N$, and so $N$ cannot be $k$-step nilpotent either.
\end{proof}

As the group $G_k$ constructed in Proposition \ref{prop:needfg} is a direct limit of finite groups, $\varinjlim G_k(n)$, it is amenable, and in particular the finite subgroups $G_k(n)$ form a F{\o}lner sequence for $G_k$. We may thus define measures $\mu_n$ on $G_k$ by setting
\[
\mu_n(A) = \frac{|A \cap f_n(G_k(n))|}{|G_k(n)|}
\]
for any $A \subseteq G_k$, where $f_n: G_k(n) \to G_k$ is the canonical inclusion. It follows from a result of the second author \cite[Theorem 1.12]{comm.prob} that the sequence $M = (\mu_n)_{n=1}^\infty$ measures index uniformly on $G$. Moreover, we know that $|\gamma_{k+1}(G_k(n))| = p$ which, when combined with \cref{prop:non-id}, implies that $\dc^k(G_k(n)) \geq 1/p$ and therefore $\dc_M^k(G_k) = \limsup_{n \to \infty} \dc^k(G_k(n)) \geq 1/p$. This shows that the assumption for $G$ to be finitely generated is necessary in \cref{thm:dc_k} as well.

\begin{remark} \label{rem:rfnilp}
Note that the group $G$ in \cref{prop:needfg} cannot be residually finite. Indeed, any finite-index subgroup $N \leq G$ cannot be $k$-step nilpotent, and therefore
\begin{equation} \label{e:HkNnot1}
\{1\} \neq \gamma_{k+1}(N) \leq \gamma_{k+1}(G) \cap N \leq H \cap N,
\end{equation}
where the last inclusion comes from the fact that $G/H$ is $k$-step nilpotent. Hence, $H \cap N \neq \{1\}$; since $H$ has prime order, this implies that $H \subseteq N$.
%But now if $G$ was residually finite, then for each non-trivial $g \in H$ we could pick a finite-index subgroup $N_g \leq G$ such that $g \notin N_g$. Then $N := \bigcap_{g \in H} N_g$ is a finite-index subgroup of $G$ (as $H$ is finite) and $N \cap H = \{1\}$, contradicting \eqref{e:HkNnot1}.
Thus $H$ is contained in every finite index subgroup of $G$ and so $G$ cannot be residually finite, as claimed.
\end{remark}

\appendix
\section{Polynomial mappings into torsion-free nilpotent groups} \label{sec:polymapnilp}
In this appendix we prove the following extension of \cref{lem:tf.quot}, using a similar argument to the one that Leibman uses to reduce \cite[Proposition 3.21]{leibman} to \cite[Proposition 2.15]{leibman}.
\begin{prop}\label{prop:tf.quot}
Let $G$ be a group, let $N$ be a finitely generated torsion-free $s$-step nilpotent group, and let $\varphi:G\to N$ be a polynomial mapping of degree $d$. Then there is a torsion-free $ds$-step nilpotent quotient $G'$ of $G$ and a polynomial mapping $\hat\varphi:G'\to N$ of degree $d$ such that, writing $\pi:G\to G'$ for the quotient homomorphism, we have $\varphi=\hat\varphi\circ\pi$.
\end{prop}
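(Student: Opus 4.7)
The plan is to mimic the reduction Leibman uses to pass from his Proposition 2.15 to his Proposition 3.21, by expressing $\varphi$ in coordinates on $N$ and reducing to the $\Z$-valued case handled by \cref{lem:tf.quot}.

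We fix a Mal'cev basis $(e_1,\ldots,e_m)$ of $N$ obtained by refining the lower central series, so that each $e_i\in\gamma_{w_i}(N)\setminus\gamma_{w_i+1}(N)$ for some weight $w_i\leq s$, and every element of $N$ admits a unique factorisation $e_1^{v_1}\cdots e_m^{v_m}$. This yields coordinate mappings $\varphi_i:G\to\Z$ via $\varphi(g)=e_1^{\varphi_1(g)}\cdots e_m^{\varphi_m(g)}$, and our first task is to show that each $\varphi_i$ is polynomial of degree at most $d\cdot s$. We proceed by induction on $m$: the top coordinate $\varphi_m$ is obtained by post-composing $\varphi$ with the quotient homomorphism $N\to N/\langle e_1,\ldots,e_{m-1}\rangle\cong\Z$, which by \cref{lem:comp.w/hom} is polynomial of degree at most $d$. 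The map $g\mapsto e_m^{-\varphi_m(g)}$ is also polynomial of degree at most $d$ (again by \cref{lem:comp.w/hom}, applied to $n\mapsto e_m^{-n}$), so by \cref{thm:leib.grp} the pointwise product $\psi(g):=\varphi(g)\,e_m^{-\varphi_m(g)}$ is polynomial and takes values in $\langle e_1,\ldots,e_{m-1}\rangle$; the induction hypothesis then applies to $\psi$.

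Granting this degree bound, \cref{lem:tf.quot} supplies, for each $i$, a torsion-free nilpotent quotient $G_i=G/K_i$ of class at most $d\cdot s$ and a polynomial $\hat\varphi_i:G_i\to\Z$ of the same degree with $\varphi_i=\hat\varphi_i\circ\pi_i$. We let $K=\bigcap_{i=1}^mK_i$ and $G'=G/K$; the diagonal embedding $G'\hookrightarrow\prod_{i=1}^mG_i$ realises $G'$ as a subgroup of a torsion-free, $ds$-step nilpotent group, so $G'$ has the same properties. We then define $\hat\varphi:G'\to N$ by $\hat\varphi(gK)=\varphi(g)$; this is well-defined because $K\subseteq K_i$ for each $i$ forces each $\varphi_i$---and hence $\varphi$ itself---to be constant on $K$-cosets. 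The identity $\varphi=\hat\varphi\circ\pi$ holds by construction, and the iterated-derivative identity $\partial_{u_1}\cdots\partial_{u_{d+1}}\varphi=(\partial_{\pi(u_1)}\cdots\partial_{\pi(u_{d+1})}\hat\varphi)\circ\pi$ combined with the surjectivity of $\pi$ forces $\hat\varphi$ to have degree at most $d$.

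The main obstacle is the degree bound $\deg\varphi_i\leq d\cdot s$. It is easy to see that each $\varphi_i$ is polynomial of \emph{some} finite degree, but a direct computation with the identity $\partial_u(\varphi\psi)=(\partial_u\varphi)^\psi\,\partial_u\psi$ shows that taking products in a non-abelian nilpotent group can inflate the degree, and controlling this inflation requires careful bookkeeping using iterated commutator identities in $N$. Leibman carries out precisely this analysis and in fact obtains the sharper bound $\deg\varphi_i\leq d\cdot w_i$, from which $\deg\varphi_i\leq d\cdot s$ follows since $w_i\leq s$.
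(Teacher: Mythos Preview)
Your argument is correct and close in spirit to the paper's, though organised differently. The paper takes $G'=G/\overline{\gamma_{ds+1}(G)}$ as the canonical quotient from the outset and reduces the claim that $\varphi$ factors through it to the case of finitely generated $G$, via an auxiliary lemma showing that each $c\in\overline{\gamma_{ds+1}(G)}$ already lies in $\overline{\gamma_{ds+1}(\Lambda)}$ for some finitely generated $\Lambda\le G$; it then invokes \cref{lem:tf.quot} together with Leibman's Proposition~3.15 to conclude. You instead decompose $\varphi$ into Mal'cev coordinates, apply \cref{lem:tf.quot} to each coordinate $\varphi_i$ separately, and assemble $G'$ as $G/\bigcap_iK_i$, which you then embed diagonally in $\prod_iG_i$ to verify it is torsion-free $ds$-step nilpotent. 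Both routes rest on the same external input---Leibman's bound $\deg\varphi_i\le d\cdot w_i$---and your acknowledgment that your inductive sketch does not by itself establish this bound is well taken: the product step $\psi=\varphi\cdot e_m^{-\varphi_m}$ can raise degree in a non-abelian target, so the bookkeeping really does require Leibman's weighted analysis. The paper's route is shorter and yields the canonical quotient independent of the choice of basis; yours is more explicit about the role of coordinates and, since \cref{lem:tf.quot} is stated for arbitrary $G$, sidesteps the separate reduction to finitely generated~$G$---provided the Leibman degree bound you cite is available for general domain~$G$ rather than only for nilpotent domains.
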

Given a group $G$ we write
\[
G=\gamma_1(G)\rhd \gamma_2(G)\rhd\ldots
\]
for the lower central series of $G$. Following \cite{mpty}, we define the \emph{generalised commutator subgroups} $\overline{\gamma_i(G)}$ of $G$ via
\[
\overline{\gamma_i(G)}=\{x\in G: \exists n\in\N \text{ such that } x^n\in \gamma_i(G) \},
\]
noting that $G/\overline{\gamma_i(G)}$ is torsion-free $(i-1)$-step nilpotent.
\begin{lemma}\label{lem:comm.fg}
Let $G$ be a group and let $x\in \gamma_i(G)$. Then there exists a finitely generated subgroup $\Gamma=\Gamma(x,i)<G$ such that $x\in\gamma_i(\Gamma)$. If instead $x\in\overline{\gamma_i(G)}$ then there exists a finitely generated subgroup $\Lambda=\Lambda(x,i)<G$ such that $x\in\overline{\gamma_i(\Lambda)}$
\end{lemma}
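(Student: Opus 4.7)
The plan is to prove the first assertion by induction on $i$, then deduce the second assertion easily.

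For $i = 1$, we have $\gamma_1(G) = G$, so given $x \in G$ we may simply take $\Gamma = \langle x \rangle$, which is finitely generated and contains $x = x \in \gamma_1(\Gamma)$. For the inductive step, suppose the result holds for $i-1$. By definition $\gamma_i(G)$ is generated as a group by commutators $[y,z]$ with $y \in \gamma_{i-1}(G)$ and $z \in G$, so any $x \in \gamma_i(G)$ can be written as a finite product
\[
x = \prod_{j=1}^{m} [y_j, z_j]^{\varepsilon_j}
\]
with $y_j \in \gamma_{i-1}(G)$, $z_j \in G$ and $\varepsilon_j \in \{\pm 1\}$. By the inductive hypothesis, for each $j$ there is a finitely generated subgroup $\Gamma_j < G$ with $y_j \in \gamma_{i-1}(\Gamma_j)$. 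Set
\[
\Gamma := \langle \Gamma_1, \ldots, \Gamma_m, z_1, \ldots, z_m \rangle,
\]
which is finitely generated. Since $y_j \in \gamma_{i-1}(\Gamma_j) \subseteq \gamma_{i-1}(\Gamma)$ and $z_j \in \Gamma$, each commutator $[y_j, z_j]$ lies in $\gamma_i(\Gamma)$, and hence so does $x$.

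For the second assertion, suppose $x \in \overline{\gamma_i(G)}$, so $x^n \in \gamma_i(G)$ for some $n \in \N$. By the first part there exists a finitely generated subgroup $\Gamma < G$ such that $x^n \in \gamma_i(\Gamma)$. Setting $\Lambda := \langle \Gamma, x \rangle$, which remains finitely generated, we have $x^n \in \gamma_i(\Gamma) \subseteq \gamma_i(\Lambda)$, and hence $x \in \overline{\gamma_i(\Lambda)}$ by definition.

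There is no serious obstacle here: the only point to verify is the standard fact that $\gamma_i(G)$ is generated as a group (not just normally generated in $G$) by the commutators $[y,z]$ with $y \in \gamma_{i-1}(G)$ and $z \in G$, which follows immediately from its definition as $[\gamma_{i-1}(G), G]$.
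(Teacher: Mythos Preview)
Your proof is correct and follows essentially the same approach as the paper's: induction on $i$ for the first assertion, writing $x$ as a finite product of commutators and collecting the finitely generated subgroups from the inductive hypothesis together with the $z_j$, then adjoining $x$ for the second assertion. The only cosmetic difference is that you include the signs $\varepsilon_j$ explicitly, whereas the paper absorbs them using the fact that $[y,z]^{-1}$ is again a commutator of the required form.
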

\begin{proof}
To start with we assume that $x\in \gamma_i(G)$. In the case $i=1$ the lemma is satisfied by taking $\Gamma(x,1)=\langle x\rangle$, so we may assume that $i\ge2$. If $x\in \gamma_i(G)$ this implies that there exist elements $y_1,\ldots,y_k\in \gamma_{i-1}(G)$ and $z_1,\ldots,z_k\in G$ such that $x=\prod_{j=1}^k[y_j,z_j]$, and so by induction on $i$ we may take
\[
\Gamma(x,i)=\langle\Gamma(y_1,i-1),\ldots,\Gamma(y_k,i-1),z_1,\ldots,z_k\rangle.
\]
If instead $x\in\overline{\gamma_i(G)}$ then by definition there exists $n\in\N$ such that $x^n\in \gamma_i(G)$, and so we may take $\Lambda(x,i)=\langle\Gamma(x^n,i),x\rangle$.
\end{proof}

\begin{proof}[Proof of \cref{prop:tf.quot}]
It is sufficient to show that for every $x\in G$ and $c\in\overline{\gamma_{ds+1}(G)}$ we have $\varphi(xc)=\varphi(x)$. Following Leibman's proof of \cite[Proposition 3.21]{leibman}, we may assume by \cref{lem:comm.fg} that $G$ is finitely generated. It then follows from \cite[Corollary 1.18]{leibman} that $\varphi(G)$ lies in a finitely generated subgroup of $N$, and so we may also assume that $N$ is finitely generated. The proposition then follows from \cref{lem:tf.quot} and \cite[Proposition 3.15]{leibman}.
\end{proof}

\section{Hyperbolic groups} \label{yago}

The following argument was communicated by Yago Antol\'in, and shows that generic subgroups of hyperbolic groups are free, with respect to the uniform probability measure on the balls given by a finite generating set. In particular, the degree of nilpotence with respect to such a measure is zero for any non-elementary hyperbolic group.

These techniques and the result are well known to experts, and we include it here for completeness.

\bigskip

As previously, let $F_r$ denote the free group of rank $r$. For a group, $G$, generated by a (finite) set $X$, we let  $\ball_X(n)$ denote the ball of radius $n$, and for an element $g \in G$, we denote by $|g|_X$ the word length of $g$. Let $\mu_n$ be the uniform probability measure on the ball of radius $n$ in $G$ with respect to $X$.

\begin{theorem}\label{thm:generic-free}
	Let $G$ be a non-elementary hyperbolic group with finite generating set $X$. For every $r\in \mathbb{N}$
	$$\lim_{n\to \infty} \dfrac{|\{(g_1,\dots,g_r)\in \ball_X(n)^r\mid \gen{g_1,\dots,g_r}\cong F_r\}|}{|\ball_X(n)|^r}=1, $$
	and the limit converges exponentially fast.
\end{theorem}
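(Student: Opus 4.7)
The plan is to invoke the classical ping-pong lemma, applied to the action of $G$ on its Gromov boundary $\partial G$, and to show that a generic $r$-tuple of elements of $\ball_X(n)$ satisfies its hypotheses with probability $1 - e^{-\Omega(n)}$. Concretely, I would define a tuple $(g_1,\ldots,g_r)$ to be \emph{in general position} at scale $n$ if each $g_i$ is loxodromic with translation length $\geq \delta n$ (for a fixed $\delta>0$ depending only on $(G,X)$), and if the $2r$ boundary fixed points $g_i^{\pm\infty}$ are pairwise distinct and separated in the visual metric on $\partial G$ by at least some fixed constant. Once these two conditions hold, a standard ping-pong argument (using neighbourhoods of the fixed points as the ping-pong domains) shows that $\langle g_1,\ldots,g_r\rangle\cong F_r$, so it remains only to show that the complementary set of ``bad'' tuples has exponentially small density in $\ball_X(n)^r$.

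The first step is to control the proportion of \emph{bad single elements}. Hyperbolic groups are automatic, and the geodesic language has exponential growth with a well-defined exponential rate $\lambda>1$; on the other hand, elements of finite order, or with translation length $o(n)$, lie in uniformly quasi-convex proper subsets (centralisers of torsion or of short elements) whose growth exponents are strictly smaller than $\lambda$. Similarly, the set of $g\in\ball_X(n)$ whose attracting fixed point lies in any fixed $\epsilon$-ball of $\partial G$ has exponentially smaller cardinality than $\ball_X(n)$, because such a $g$ must track a fixed geodesic ray for a proportion of its length, and the number of elements tracking a given ray grows at a strictly smaller exponential rate (this is a consequence of the Patterson--Sullivan / shadow lemma picture for hyperbolic groups, or alternatively of the thin-triangle combinatorics used by Coornaert). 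Combining these, the set of $g\in\ball_X(n)$ which are either non-loxodromic, have short translation length, or whose $g^{\pm\infty}$ lie in some prescribed bounded collection of small boundary balls, has exponentially small density.

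The second step is the inductive control of tuples. Condition on $g_1,\ldots,g_{i-1}$ being in general position. The ``forbidden'' region for $g_i$ consists of those elements whose fixed points lie in small neighbourhoods of $\{g_1^{\pm\infty},\ldots,g_{i-1}^{\pm\infty}\}$, together with those failing the loxodromic/translation-length condition; by the previous paragraph this is an exponentially negligible fraction of $\ball_X(n)$, uniformly in the conditioning data. Fubini over $\ball_X(n)^r$ and a union bound over $i=1,\ldots,r$ then gives a single exponential rate $1 - Cr\cdot e^{-cn}$ for the proportion of good tuples; ping-pong finishes the proof. A convenient packaging of the ingredient about boundary fixed points is the Coornaert--Papadopoulos description of the action on $\partial G$ together with the fact, due to Gilman, Cannon and others, that the number of elements of $\ball_X(n)$ whose geodesic representative begins and ends with a prescribed word of length $L$ is at most $C\lambda^{n-L}$.

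The main technical obstacle is the exponential separation estimate for boundary fixed points: showing that for a fixed small $\epsilon>0$, the proportion of $g\in\ball_X(n)$ with $g^{\pm\infty}$ in a prescribed $\epsilon$-neighbourhood of some point of $\partial G$ is $O(e^{-cn})$, with the constant $c>0$ depending only on $(G,X,\epsilon)$ and not on the prescribed point. This is the step that genuinely uses non-elementarity and hyperbolicity, and it is what is needed to pass from ``most elements are loxodromic'' (which is easy) to ``most tuples are ping-pong tuples'' (which is what the theorem demands). Once that estimate is in hand, everything else is bookkeeping.
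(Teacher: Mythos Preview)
Your approach via ping-pong on $\partial G$ is genuinely different from the paper's, which works entirely inside the Cayley graph and never touches the boundary. The paper's freeness criterion (\cref{lem:sufficient-condition-free}, via Delzant's lemma) is the word-length condition $|ab|_X \ge \max\{|a|_X,|b|_X\} + D_0$ for all $a,b \in \{g_1^{\pm1},\ldots,g_r^{\pm1}\}$ with $a \ne b^{-1}$. Elementary thin-triangle/Gromov-product estimates then bound the failure sets: the ``$a=b$'' case by $|\ball_X(n/2+O(1))|$ (\cref{lem:AA}) and the ``$a\ne b$'' case by $|\ball_X(3n/4+O(1))|$ (\cref{lem:AB}), and Coornaert's growth estimate $|\ball_X(n)|\asymp\lambda^n$ converts these directly into exponential decay. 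No Patterson--Sullivan theory, no ping-pong domains, no boundary dynamics.

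Your proposal has a real gap precisely at the step you flag as the main obstacle. The claim that, for a \emph{fixed} $\epsilon>0$, the set of $g\in\ball_X(n)$ with $g^{+\infty}$ in a prescribed $\epsilon$-ball of $\partial G$ has cardinality $O(e^{-cn}|\ball_X(n)|)$ is false: that proportion converges to the Patterson--Sullivan measure of the ball, a \emph{positive constant} independent of $n$. Your justification --- that such a $g$ ``must track a fixed geodesic ray for a proportion of its length'' --- is where the error enters: the constraint $g^{+\infty}\in B_\epsilon(\xi)$ forces the geodesic $[1,g]$ to fellow-travel the ray towards $\xi$ only for time $O(\log(1/\epsilon))$, which is bounded, not linear in $n$. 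In your own prescribed-prefix language, the relevant $L$ is $O_\epsilon(1)$, so $C\lambda^{n-L}$ is still of order $\lambda^n$. With $\epsilon$ fixed your argument therefore yields at best a density bound of $1-O_r(\epsilon^{\alpha})$ for some $\alpha>0$, which (after sending $\epsilon\to 0$) proves the limit equals $1$ but \emph{not} the exponential rate the theorem asserts.

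The boundary approach can be repaired by letting $\epsilon=\epsilon_n$ shrink exponentially in $n$, slowly enough that ping-pong domains of radius $\epsilon_n$ are still mapped correctly by elements of translation length $\ge\delta n$. This is feasible but needs a quantitative ping-pong lemma relating domain size to translation length, which you have not supplied. The paper's word-length route sidesteps all of this and is both shorter and more elementary.
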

We note that the analogous theorem with respect to sequences of measures $(\mu^{*n})_{n=1}^\infty$ corresponding to the steps of the random walk on $G$ was proved in \cite{GMO}.

The following Corollary is immediate:

\begin{corollary}\label{hypnonilp}
Let $G$ be a non-elementary hyperbolic group with finite generating set, $X$, and write $M=(\mu_n)_{n=1}^\infty$ for the sequence of uniform measures on the balls $\ball_X(n)$. Then $\dc^k_M(G)=0$.
\end{corollary}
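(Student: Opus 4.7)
The plan is to derive the corollary directly from \cref{thm:generic-free}, splitting on whether $k=0$ or $k\ge1$.

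First I would dispose of the trivial case $k=0$. By definition $\dc^0_M(G) = \limsup_{n\to\infty}\mu_n(\{1\}) = \limsup_{n\to\infty} 1/|\ball_X(n)|$. Since a non-elementary hyperbolic group is infinite (in fact it contains a rank-$2$ free subgroup, hence has exponential growth), $|\ball_X(n)|\to\infty$ and the limsup is $0$.

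For the main case $k\ge1$, the plan is to apply \cref{thm:generic-free} with $r=k+1$. The theorem guarantees that, writing
\[
U_n = \{(g_1,\dots,g_{k+1})\in \ball_X(n)^{k+1}\mid \gen{g_1,\dots,g_{k+1}}\cong F_{k+1}\},
\]
the ratio $|U_n|/|\ball_X(n)|^{k+1}$ tends to $1$. For any $(g_1,\ldots,g_{k+1})\in U_n$, the tuple is a free basis of its span, so verifying $[g_1,\ldots,g_{k+1}]\neq 1$ in $G$ reduces to the abstract fact that the left-normed commutator $c^{(k)}(x_1,\ldots,x_{k+1})$ of the free generators of $F_{k+1}$ is non-trivial. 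Granting this, the set $\{(g_1,\ldots,g_{k+1})\in\ball_X(n)^{k+1}:[g_1,\ldots,g_{k+1}]=1\}$ is contained in $\ball_X(n)^{k+1}\setminus U_n$, whose relative measure tends to $0$; taking the limsup over $n$ yields $\dc^k_M(G)=0$.

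The only (very mild) obstacle is the algebraic fact that $c^{(k)}(x_1,\ldots,x_{k+1})\neq 1$ in $F_{k+1}$, which I would justify by invoking the Magnus embedding of $F_{k+1}$ into the ring $\Z\langle\!\langle X_1,\ldots,X_{k+1}\rangle\!\rangle$ of non-commutative formal power series (sending $x_i\mapsto 1+X_i$): the resulting image of $[x_1,\ldots,x_{k+1}]$ has leading term, in the degree-$(k+1)$ component, a non-zero Lie monomial $[\![X_1,\ldots,X_{k+1}]\!]$ in the free Lie algebra, which is one of the Hall basic Lie elements and so is non-zero. Equivalently, since $F_{k+1}$ is not $k$-step nilpotent and the basic commutators of weight $k+1$ in the generators form a basis of the free abelian group $\gamma_{k+1}(F_{k+1})/\gamma_{k+2}(F_{k+1})$, the class of $c^{(k)}(x_1,\ldots,x_{k+1})$ there is non-zero. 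No further work is needed to complete the proof.
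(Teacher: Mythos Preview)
Your proof is correct and follows exactly the intended route: the paper states the corollary as ``immediate'' from \cref{thm:generic-free} without further argument, and what you have written is precisely the unpacking of that word---apply the theorem with $r=k+1$, observe that a free basis cannot satisfy $[g_1,\ldots,g_{k+1}]=1$, and conclude. Your justification that $c^{(k)}(x_1,\ldots,x_{k+1})\ne1$ in $F_{k+1}$ via basic commutators (or the Magnus embedding) is more detail than the paper provides, but it is the standard fact one needs and is correctly argued.
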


Throughout, $G$ is a non-elementary hyperbolic group (i.e.\ a hyperbolic group that is not virtually cyclic) and $X$ a finite generating set of $G$.
We assume that $\ga(G,X)$ is $\delta$-hyperbolic. There are many equivalent definitions of Gromov hyperbolicity, (see, for example, \cite[Proposition III.H.1.17]{BH}), for convenience we will use the one that says that geodesic triangles are  $\delta$-thin. In particular, if $x,y,z\in G$, and $\alpha$ is a geodesic with endpoints in $x$ and $y$, $\beta$ a geodesic with endpoints in $x,z$ and $\gamma$ a geodesics with endpoints $y,z$ then we have that for points $v\in \alpha$ and $u\in \beta$ with $\dis(x,u)=\dis(x,v) \leq (y\cdot z)_x \coloneqq\frac{1}{2}(\dis(x,y)+\dis(x,z)-\dis(y,z))$ one has that $\dis(u,v)\leq \delta$. 

Since $G$ has exponential growth, $\lim \sqrt[n]{|\ball_X(n)|}=\lambda >1$. A result of Coornaert \cite{c93} states that there are positive constants $A$, $B$ and $n_0$ such that 
\begin{equation}\label{eq:growth}
A\lambda^n \leq |\ball_X(n)|\leq B\lambda^n
\end{equation}
for all $n\geq n_0$. 

\begin{remark}
	From the submultiplicativity of the function $|\ball_X(n)|$ it follows that $\lim \sqrt[n]{|\ball_X(n)|}$ exists and hence  for every $\varepsilon >0$ there exists $n_\varepsilon$, $A$ and $B$ such that for all $n>n_\varepsilon$, \[ A(\lambda-\varepsilon)^n\leq |\ball_X(n)|\leq B(\lambda + \varepsilon)^n. \]
	One can prove \cref{thm:generic-free} using this weaker fact. However, for simplicity of exposition, we have preferred to use  \eqref{eq:growth}.
\end{remark}

\begin{lemma}[Delzant {\cite[Lemma 1.1.]{Delzant}}] \label{lem:Delzant}
	Let $(x_n)$ be a sequence of points on a $\delta$-hyperbolic geodesic metric space such that
	$\dis(x_{n+2},x_n)\geq \max \left(\dis(x_{n+2},x_{n+1}), \dis(x_{n+1},x_{n})\right)+2\delta+a$. 
	Then  $\dis(x_n,x_m)\geq a |m-n|$.
\end{lemma}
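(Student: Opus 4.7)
The plan is to prove this by induction on $|m-n|$, recognising the statement as a local-to-global principle: the hypothesis says that the concatenation of geodesics $[x_n,x_{n+1}]\cup\cdots\cup[x_{m-1},x_m]$ is a local quasi-geodesic (consecutive triples are nearly collinear, measured against $\delta$), and we want to deduce that it is globally one.

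Without loss of generality $n=0$. Combining the hypothesis with the triangle inequality $\dis(x_k,x_{k+2})\le \dis(x_k,x_{k+1})+\dis(x_{k+1},x_{k+2})$ immediately yields $\dis(x_k,x_{k+1})\ge 2\delta+a$ for every $k$, which disposes of the base case $m=1$. For the inductive step with $m\ge 2$, the strategy is to prove the stronger recursive bound $\dis(x_0,x_m)\ge \dis(x_1,x_m)+a$; combined with the inductive hypothesis $\dis(x_1,x_m)\ge a(m-1)$ applied to the shifted sequence $(x_1,\ldots,x_m)$, this yields $\dis(x_0,x_m)\ge am$. Using the Gromov product identity $\dis(x_0,x_m)=\dis(x_0,x_1)+\dis(x_1,x_m)-2(x_0\cdot x_m)_{x_1}$, the stronger bound is equivalent to $(x_0\cdot x_m)_{x_1}\le \tfrac{1}{2}(\dis(x_0,x_1)-a)$.

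The hypothesis applied at index $0$ directly gives $(x_0\cdot x_2)_{x_1}\le \tfrac{1}{2}(\dis(x_0,x_1)-2\delta-a)$, so it suffices to prove the fellow-travelling bound $(x_0\cdot x_m)_{x_1}\le (x_0\cdot x_2)_{x_1}+\delta$. This is the main obstacle: geometrically it asserts that from $x_1$'s viewpoint the direction to $x_m$ agrees with the direction to $x_2$, so that any geodesic $[x_1,x_m]$ initially fellow-travels $[x_1,x_2]$. The plan is to invoke Gromov's four-point inequality $(x_0\cdot x_2)_{x_1}\ge \min\{(x_0\cdot x_m)_{x_1},(x_2\cdot x_m)_{x_1}\}-\delta$: the favourable case is precisely what we need, while the alternative $(x_2\cdot x_m)_{x_1}\le (x_0\cdot x_2)_{x_1}+\delta$ would mean that, from $x_1$'s viewpoint, $x_2$ and $x_m$ lie in different directions, contradicting the global picture. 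This alternative must be ruled out by propagating the argument inductively along the subsequence $(x_2,x_3,\ldots,x_m)$, or equivalently by tracking nearest-point projections $p_k\in[x_0,x_m]$ of the $x_k$ and showing they appear monotonically along the geodesic with spacing at least $a$; this propagation is the most delicate part.
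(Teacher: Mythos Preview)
The paper does not actually prove this lemma; it is quoted directly from Delzant with a citation and used as a black box. So there is no ``paper's proof'' to compare against, and I will simply assess your argument on its own terms.

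Your overall strategy is the right one, and in fact is essentially Delzant's: reduce to the recursive inequality $\dis(x_0,x_m)\ge \dis(x_1,x_m)+a$, rewrite this as a bound on the Gromov product $(x_0\cdot x_m)_{x_1}$, and use the four-point inequality to compare with $(x_0\cdot x_2)_{x_1}$, which the hypothesis controls. The issue is how you set up the induction and how you dispose of the ``alternative'' branch of the four-point inequality.

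As written, your inductive hypothesis is the bare statement $\dis(x_1,x_m)\ge a(m-1)$, and you hope to establish the \emph{stronger} bound $\dis(x_0,x_m)\ge \dis(x_1,x_m)+a$ at step $m$. But that stronger bound is exactly what you need inductively to kill the alternative: if you already knew $\dis(x_1,x_m)\ge \dis(x_2,x_m)+a$ for the shifted sequence $(x_1,\ldots,x_m)$, then
\[
(x_2\cdot x_m)_{x_1}=\tfrac12\bigl(\dis(x_1,x_2)+\dis(x_1,x_m)-\dis(x_2,x_m)\bigr)\ge \tfrac12\bigl(\dis(x_1,x_2)+a\bigr),
\]
whereas the hypothesis gives $(x_0\cdot x_2)_{x_1}\le \tfrac12(\dis(x_1,x_2)-2\delta-a)$. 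Hence $(x_2\cdot x_m)_{x_1}>(x_0\cdot x_2)_{x_1}+\delta$, so in the four-point inequality the minimum is forced to be $(x_0\cdot x_m)_{x_1}$, and the ``favourable'' case is the only one. Your gesture toward ``propagating along $(x_2,\ldots,x_m)$'' or tracking nearest-point projections is unnecessary and slightly off target; the clean fix is simply to take $\dis(x_0,x_m)\ge \dis(x_1,x_m)+a$ (for every sequence satisfying the hypothesis) as the statement proved by induction on $m$. The base case $m=1$ is your observation that $\dis(x_0,x_1)\ge 2\delta+a\ge a$, and the desired conclusion $\dis(x_0,x_m)\ge am$ then follows by iterating the recursive bound.
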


\begin{lemma}\label{lem:sufficient-condition-free}
	There exists a constant $D_0=D_0(\delta)\geq 0$ such that the following holds. 
	
	Let $g_1,g_2,\dots, g_r \in G$  satisfying that  for all 
	$a,b\in \{g_1,\dots, g_r\}^{\pm 1}$ with $a\neq b^{-1}$ the inequality 
	\begin{equation}\label{eq:sufficient-condition-free}
	|ab|_X\geq \max\{|a|_X, |b|_X\}+D_0
	\end{equation} 
	holds. Then $\gen{g_1,\dots,g_n}$ is a free subgroup with basis $\{g_1,\dots,g_n\}$.
\end{lemma}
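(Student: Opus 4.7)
The plan is to prove the free-subgroup conclusion by showing that every nontrivial reduced word $w = a_1 a_2 \cdots a_n$ over the alphabet $\{g_1,\dots,g_r\}^{\pm 1}$ represents a nontrivial element of $G$; this is exactly what it means for $\langle g_1,\dots,g_r\rangle$ to be free with the given basis. I would track the trajectory of the partial products in the Cayley graph $\Gamma(G,X)$ by setting $x_0 = 1$ and $x_i = a_1 a_2 \cdots a_i$ for $1 \le i \le n$, so that the element represented by $w$ is precisely $x_n$, and it suffices to bound $\dis(x_n,x_0)$ below.

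The key computation is that $\dis(x_{i}, x_{i+1}) = |a_{i+1}|_X$ while $\dis(x_{i}, x_{i+2}) = |a_{i+1} a_{i+2}|_X$, and since $w$ is reduced we have $a_{i+2} \neq a_{i+1}^{-1}$, so the hypothesis \eqref{eq:sufficient-condition-free} applies with $a = a_{i+1}$ and $b = a_{i+2}$, giving
\[
\dis(x_{i+2},x_{i}) \;\ge\; \max\{\dis(x_{i+2},x_{i+1}),\, \dis(x_{i+1},x_{i})\} + D_0.
\]
Choosing $D_0 \coloneqq 2\delta + 1$, this is precisely the hypothesis of \cref{lem:Delzant} with $a = 1$, and that lemma then yields $\dis(x_n,x_0) \ge n \ge 1$. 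In particular $x_n \ne 1$, so $w$ represents a nontrivial element of $G$, as required.

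The main subtlety I anticipate is making sure the hypothesis \eqref{eq:sufficient-condition-free} really does cover the relevant consecutive pairs. The hypothesis excludes $a = b^{-1}$ as elements of $G$, whereas the reducedness of $w$ only guarantees $a_{i+1} \neq a_i^{-1}$ as letters. I would handle this by first extracting from \eqref{eq:sufficient-condition-free} the preliminary observation that the set $\{g_1^{\pm 1},\dots,g_r^{\pm 1}\}$ has cardinality exactly $2r$ in $G$ (no $g_i$ is trivial, no $g_i$ has order $2$, and $g_i \ne g_j^{\pm 1}$ for $i \ne j$), since any such coincidence would produce a pair $(a,b)$ with $a \ne b^{-1}$ but $|ab|_X$ far too small, contradicting the hypothesis with $D_0 > 0$. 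Once that is established, the symbolic reducedness of $w$ coincides with element-level reducedness in $\{g_1,\dots,g_r\}^{\pm 1} \subset G$, and the Delzant-style estimate above goes through without further issue.
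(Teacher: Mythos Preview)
Your argument is correct and essentially identical to the paper's: set $D_0 = 2\delta + 1$, look at the sequence of prefixes $x_i$ of a reduced word, verify the inequality in \cref{lem:Delzant} from the hypothesis \eqref{eq:sufficient-condition-free}, and conclude $\dis(x_0,x_n) \ge n$.

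The paper does not address the letter-versus-element subtlety you flag in your final paragraph. Your instinct there is good, but your proposed resolution is not quite complete: the hypothesis \eqref{eq:sufficient-condition-free} does \emph{not} by itself rule out some $g_i$ having order $2$, since for such $g_i$ the pair $(a,b) = (g_i,g_i)$ satisfies $a = b^{-1}$ in $G$ and is therefore excluded from the hypothesis, so no contradiction arises. Indeed the lemma as literally stated fails in that edge case (take $r=1$ with $g_1$ of order $2$ in a hyperbolic group containing torsion). In the paper's application this is harmless, because the $g_i$ are chosen outside $AA(n)$ and hence have infinite order, and the condition $g_j \notin AB(g_i^{\pm 1},n)$ rules out $g_i = g_j^{\pm 1}$ for $i \ne j$.
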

\begin{proof}
	Take $D_0\geq 2\delta +1$. Let $w$ be any reduced word on $Z=\{g_1,\dots,g_r\}^{\pm 1}$ and denote by $w_i$ the prefix of length $i$ (as a word in $Z$). Then $\dis(w_i,w_{i+2})=\dis(1,w_i^{-1}w_{i+2})=|ab|_X$ for some $a,b \in Z$ with $a\neq b^{-1}$ (since $w$ is reduced).
	Thus, it follows from \cref{lem:Delzant} that $|w|_X\geq \ell_Z(w)$, where $\ell_Z(w)$ denotes the length of $w$ as a word in $Z$.
\end{proof}

We will find bounds on the number of elements in $\ball_X(n)$ not satisfying \eqref{eq:sufficient-condition-free}.
There are two different cases to be considered: $a=b$ and $a\neq b$. 

\begin{lemma}\label{lem:AA}
	There is $D_1=D_1(\delta,D_0)\geq 0$ such that the cardinality of the set
	\[ AA(n)=\{g\in \ball_X(n)\mid |g^2|_X< |g|_X+D_0\} \] is bounded above by $|\ball_X(\frac{n}{2}+D_1)|.$
\end{lemma}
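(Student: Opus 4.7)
The plan is to show that every $g \in AA(n)$ can be written as $g = q u q^{-1}$ with $q \in \ball_X(n/2)$ and $u$ lying in a ball of radius depending only on $\delta$ and $D_0$. The bound on $|AA(n)|$ will then follow by counting such pairs and invoking the Coornaert estimates \eqref{eq:growth}.

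Fix $g \in AA(n)$, write $\ell = |g|_X \leq n$, and fix a geodesic $\alpha: [0,\ell] \to \ga(G,X)$ from $1$ to $g$. Consider the geodesic triangle with vertices $1, g, g^2$; two of its sides may be taken to be $\alpha$ and its $g$-translate $g\alpha$ (a geodesic from $g$ to $g^2$). The Gromov product at $g$ is
\[ (1 \cdot g^2)_g = \ell - \tfrac{1}{2}|g^2|_X > \tfrac{1}{2}(\ell - D_0), \]
so for $t_0 = \lfloor (\ell - D_0)/2 \rfloor$ the $\delta$-thinness of the triangle forces $\dis(p_1, p_2) \leq \delta$, where $p_1 = \alpha(\ell - t_0)$ and $p_2 = g \cdot \alpha(t_0)$ lie on the two sides emanating from $g$ at distance $t_0$ from $g$.

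Next, provided $\ell \geq D_0$ we have $t_0 \leq \ell - t_0$, so $\alpha(t_0)$ is an initial segment of $p_1$, and we may write $p_1 = \alpha(t_0) \cdot w$ with $|w|_X \leq D_0 + 1$. Setting $z = p_1^{-1} p_2 \in \ball_X(\delta)$, the relation $p_2 = g \cdot \alpha(t_0) = p_1 z$ rearranges to
\[ g = \alpha(t_0) \cdot (wz) \cdot \alpha(t_0)^{-1}, \]
exhibiting $g$ as a conjugate of the element $wz \in \ball_X(D_0 + \delta + 1)$ by $\alpha(t_0) \in \ball_X(n/2)$. The remaining range $\ell < D_0$ contributes only boundedly many elements, to be absorbed into the final constant.

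The map $g \mapsto (\alpha(t_0), wz)$ then gives $|AA(n)| \leq |\ball_X(n/2)| \cdot |\ball_X(D_0 + \delta + 1)|$. Applying the upper estimate of \eqref{eq:growth} to both factors and the lower estimate to $|\ball_X(n/2 + D_1)|$, one sees that any $D_1 = D_1(\delta, D_0)$ with $\lambda^{D_1} \geq (B^2/A) \lambda^{D_0 + \delta + 1}$ suffices, once $D_1$ is enlarged further to handle the finitely many values of $n$ below the threshold $n_0$ in \eqref{eq:growth}. The conceptual content of the argument is the identification of $g$ as a conjugate of a short element by a prefix of length roughly $\ell/2$; once this structural fact is in hand, the counting is routine, so the main (and minor) obstacle is only the bookkeeping of integer roundings in the Gromov product and the small-$n$ cases.
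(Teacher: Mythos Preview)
Your proof is correct and follows essentially the same approach as the paper: both arguments use the $\delta$-thin triangle $(1,g,g^2)$ and the Gromov product estimate $(1\cdot g^2)_g > (|g|_X - D_0)/2$ to exhibit each $g\in AA(n)$ as a conjugate of an element of length $O_{\delta,D_0}(1)$ by a prefix of length at most $n/2$, then count such pairs. Your treatment is in fact a bit more careful than the paper's about integer roundings and the small-$\ell$ and small-$n$ edge cases, but the underlying idea is identical.
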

\begin{proof}
	Let $g\in \ball_X(n)$ with $|g^2|_X < |g|_X+D_0$. 
	Then $(1 \cdot g^2)_g > |g|_X/2 -D_0/2$. Let $w$ be a geodesic word over $X$ representing $g$. 
	Suppose that $w=w_\iota w' w_\tau$, where $w_\iota$ and $w_\tau$ are the prefix and suffix of $w$ of length $|g|_X/2 - D_0/2$, respectively. Then, there exists $t\in \ball_X(\delta)$ such that $w_\tau w_\iota =_G t$. Thus $w_\iota^{-1}gw_{\iota}=w't$, and therefore $g$ is conjugated to an element of length at most $D_0+\delta$ by an element of length at most $n/2- D_0/2$. Hence, the cardinality of $AA(n)$ is bounded above by $|\ball_{X}(n/2-D_0/2)| |\ball_{X}(D_0+\delta)|\leq |\ball_X(\frac{n}{2}+D_1)|$ for some $D_1$.
\end{proof}

\begin{remark}
	Note that by  \cref{lem:sufficient-condition-free}, if $g\notin AA(n)$ then $g$ has infinite order. Thus, in particular, the above Lemma implies that the number of finite order elements in the ball of radius $n$ is at most $|\ball_X(\frac{n}{2}+D_1)|$. This appears  in \cite{Dani}.
\end{remark}

\begin{lemma}\label{lem:AB}
	Let $\e\in(3/4,1)$, $n\in \mathbb{N}$ and $g\in G$. Suppose that $|g|_X>\e n$. Then there exists $D_2=D_2(\delta,D_0,\e)\geq 0$ such that the cardinality of the set
	\[ AB(g,n)=\{h\in \ball_X(n)\mid |h|_X>\e n, |gh|_X< n+D_0\} \] is bounded above by $|\ball_X(\frac{3n}{4}+D_2)|.$
\end{lemma}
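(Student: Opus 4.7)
The plan is to exploit $\delta$-hyperbolicity to show that every $h \in AB(g,n)$ factors as $h = v^{-1} t h_2$, where $v$ is a suffix of a fixed geodesic word representing $g$, $t$ is short, and $h_2$ lies in a ball of radius strictly less than $3n/4$; then count the possibilities using Coornaert's growth estimate \eqref{eq:growth}.

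Fix a geodesic word $w_g$ for $g$, and for $h \in AB(g,n)$ set $s = \lfloor (1 \cdot gh)_g \rfloor$, the integer part of the Gromov product at $g$ in the triangle with vertices $1, g, gh$. Using $|g|_X, |h|_X > \varepsilon n$ and $|gh|_X < n + D_0$ one gets
\[ s \geq \tfrac{1}{2}\bigl(|g|_X + |h|_X - |gh|_X\bigr) - 1 > \bigl(\varepsilon - \tfrac{1}{2}\bigr) n - \tfrac{D_0}{2} - 1. \]
Write $g = uv$ with $v$ the length-$s$ suffix of $w_g$, and write a geodesic word for $h$ as $h = h_1 h_2$ with $|h_1|_X = s$. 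The point on the geodesic $[g,1]$ at distance $s$ from $g$ is $u$, while the point on $[g, gh]$ at distance $s$ from $g$ is $g h_1$; by $\delta$-thinness, $\dis(u, gh_1) \leq \delta$, equivalently $|v h_1|_X \leq \delta$. Hence there is $t \in \ball_X(\delta)$ with $h_1 = v^{-1} t$, and so
\[ h = v^{-1} t h_2, \qquad |h_2|_X = |h|_X - s \leq \bigl(\tfrac{3}{2} - \varepsilon\bigr) n + \tfrac{D_0}{2} + 1. \]

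Once $s \in \{0,1,\ldots,|g|_X\}$ is fixed, $v$ is determined by $w_g$, so the triple $(s, t, h_2)$ with $t \in \ball_X(\delta)$ and $h_2 \in \ball_X\bigl((\tfrac{3}{2}-\varepsilon) n + \tfrac{D_0}{2}+1\bigr)$ determines $h$. This gives
\[ |AB(g,n)| \leq (n+1) \cdot |\ball_X(\delta)| \cdot \bigl|\ball_X\bigl((\tfrac{3}{2}-\varepsilon) n + \tfrac{D_0}{2}+1\bigr)\bigr|. \]
Setting $\eta := \varepsilon - \tfrac{3}{4} > 0$, so that $\tfrac{3}{2} - \varepsilon = \tfrac{3}{4} - \eta$, the bound \eqref{eq:growth} estimates the right-hand side from above by a constant multiple of $(n+1)\lambda^{(3/4-\eta)n}$, while $|\ball_X(3n/4 + D_2)| \geq A \lambda^{3n/4 + D_2}$ for $n$ large. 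Since $\lambda > 1$ and $\eta > 0$, the exponential factor $\lambda^{\eta n}$ dominates the linear factor $(n+1)$, and so $D_2$ can be chosen, depending only on $\delta, D_0, \varepsilon$ (and on the fixed constants $A, B, \lambda, n_0$ of $X$), so that the desired inequality holds for all $n$ large enough; for small $n$ we enlarge $D_2$ further so that $|\ball_X(3n/4 + D_2)| \geq |\ball_X(n)|$ already dominates $|AB(g,n)|$.

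The main obstacle is precisely the role of the strict inequality $\varepsilon > 3/4$: it is what produces a positive exponent $\eta n$ of exponential gain capable of absorbing the linear factor $(n+1)$ that arises from the choice of the length $s$ of the suffix $v$. With only $\varepsilon \geq 3/4$ the gain would disappear and the counting argument would need an additional input.
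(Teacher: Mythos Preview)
Your proof is correct and uses the same geometric ingredient as the paper (the $\delta$-thinness of the triangle with vertices $1,g,gh$), but you take a slightly longer route. You set $s=\lfloor (1\cdot gh)_g\rfloor$, which depends on $h$, and this forces you to sum over the $n+1$ possible values of $s$; the resulting linear factor is then absorbed using Coornaert's estimate and the strict inequality $\varepsilon>3/4$, exactly as you explain in your final paragraph.

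The paper avoids this extra step: since $(1\cdot gh)_g>\tfrac{n}{4}-\tfrac{D_0}{2}$ for \emph{every} $h\in AB(g,n)$, one may simply fix the suffix of $w_g$ to have length $L:=\lceil \tfrac{n}{4}-\tfrac{D_0}{2}\rceil$, independently of $h$. Thinness still applies at distance $L\le (1\cdot gh)_g$, giving $h\in u_2^{-1}\,\ball_X(\delta)\,\ball_X(\tfrac{3n}{4}+\tfrac{D_0}{2})$ with a \emph{single} fixed $u_2$. This yields the bound $|\ball_X(\tfrac{3n}{4}+D_2)|$ directly, without the factor $(n+1)$ and without invoking \eqref{eq:growth}. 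The trade-off is that your approach gives the sharper radius $(\tfrac{3}{2}-\varepsilon)n+O(1)$ for $h_2$, but you then spend that gain to kill the linear overcount; the paper's approach is coarser on the radius but cleaner overall.
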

\begin{proof}
	Let $h\in \ball_X(n)$ with $|h|_X>\e n$ and $|gh|_X < n+D_0$. 
	Then $$(1 \cdot gh)_g > \e n-n/2 -D_0/2 > n/4- D_0/2.$$ 
	Let $u$ and $v$ be  geodesic words over $X$ representing $g$ and $h$ respectively. 
	Suppose that $u=u_1u_2$ and $v=v_1v_2$, where $u_2$ and  $v_1$ have length $n/4-D_0/2$. Note that $|v_2|_X\leq 3n/4+ D_0/2$.
	Then there exists $t\in \ball_X(\delta)$ such that $u_2 v_1 =_G t$. Thus, $u_2h=tv_2$, and so $AB(g,n)$ is contained in $u_2^{-1}\ball_X(\delta)\ball_X(3n/4+D_0/2)$. 
\end{proof}

\begin{proof}[Proof of \cref{thm:generic-free}]
	Fix $\e\in (3/4,1)$. 
	
	Let $$P_0(n)=\dfrac{|\{(g_1,\dots,g_r)\in (\ball_X(n)-\ball_X(\e n))^r \mid g_i\notin AA(n)\}|}{|\ball_X(n)|^r}.$$

	For  $n \gg 0$, we have from \cref{lem:AA} and \eqref{eq:growth}

	\begin{align*}
	%\dfrac{|\{(g_1,\dots,g_r)\in (\ball_X(n)-\ball_X(\e n))^r \mid g_i\in AA(n)\}|}{|\ball_X(n)|^r}&\geq \dfrac{(|\ball_{X}(n)|- A\lambda^{\e n}-A\lambda^{n/2+D_1})^r}{|\ball_X(n)|^r}\\
	P_0(n)&\geq \dfrac{(|\ball_{X}(n)|- B\lambda^{\e n}-B\lambda^{n/2+D_1})^r}{|\ball_X(n)|^r}\\
	&\geq \dfrac{(|\ball_{X}(n)|- 2B\lambda^{\e n})^r}{|\ball_X(n)|^r}\\
	&\geq 1-\dfrac{ \sum_{k=1}^r \binom{r}{k} |\ball_{X}(n)|^{r-k}(2B\lambda^{\e n})^k}{|\ball_X(n)|^r}\\
	&\geq 1- \sum_{k=1}^r \binom{r}{k} \left(\dfrac{2B\lambda^{\e n}}{A \lambda^n}\right)^k\\
	&\geq 1-\dfrac{ C_1}{\lambda^{(1-\e)n}}
	\end{align*}
	where $C_1$ is some constant depending on $A$, $B$ and $r$.
	
	For $j=1,\dots,r$, let $$P_j(n)=\dfrac{|\{(g_1,\dots,g_r)\in (\ball_X(n)-\ball_X(\e n))^r \mid g_j\notin AB(g_i^{\pm 1},n)\text{ for } i\neq j\}|}{|\ball_X(n)|^r}.$$
	For  $n\gg0$, we have from \cref{lem:AB} and \eqref{eq:growth}
	\begin{align*}
	P_j(n)
	&\geq \dfrac{(|\ball_{X}(n)|- |\ball_X(\e n)|)^r -(2r-2)|\ball_X(3n/4+D_2)|(|\ball_{X}(n)|- |\ball_X(\e n)|)^{r-1}}{|\ball_X(n)|^r}\\
	&\geq \dfrac{(|\ball_{X}(n)|- |\ball_X(\e n)|)^r}{|\ball_X(n)|^r} -\dfrac{(2r-2)|\ball_X(3n/4+D_2)|(|\ball_{X}(n)|)^{r-1}}{|\ball_X(n)|^r}\\
	&\geq 1-\dfrac{ C_1}{\lambda^{(1-\e)n}}- \dfrac{(2r-2)B\lambda^{3n/4+D_2}}{A\lambda^n}\\
	&\geq 1-\dfrac{ C_1}{\lambda^{(1-\e)n}} -\dfrac{C_2}{\lambda^{n/4}}
	\end{align*}
	where $C_2$ is some constant depending on $A$,$B$ and $r$.

	Thus, for $i=0,1,\dots,r$ $\lim_{n\to\infty}P_i(n)=1$ converges exponentially fast. By \cref{lem:sufficient-condition-free} we have that for $n\gg 0$
	$$1\geq \dfrac{|\{(g_1,\dots,g_r)\in \ball_X(n)^r\mid \gen{g_1,\dots,g_r}\cong F_r\}|}{|\ball_X(n)|^r}\geq 1 - \sum_{i=0}^r (1-P_i(n))$$
	and taking limits, we see that the probability that an $r$-tuple freely generates a free group converges to $1$ exponentially fast.
\end{proof}

%
%\bibliographystyle{amsplain}
%\bibliography{ref}

\providecommand{\bysame}{\leavevmode\hbox to3em{\hrulefill}\thinspace}
\providecommand{\MR}{\relax\ifhmode\unskip\space\fi MR }
% \MRhref is called by the amsart/book/proc definition of \MR.
\providecommand{\MRhref}[2]{%
	\href{http://www.ams.org/mathscinet-getitem?mr=#1}{#2}
}
\providecommand{\href}[2]{#2}

\end{document}